\newcommand{\tcr}[1]{\textcolor{black}{#1}}
\numberwithin{equation}{section}
\theoremstyle{plain}
\newtheorem{theorem}{Theorem}[section]
\newtheorem{lemma}[theorem]{Lemma}
\newtheorem{corollary}[theorem]{Corollary}
\newtheorem{remark}[theorem]{Remark}
\newcommand{\dx}{\,\mathrm{d}x}
\newcommand{\dt}{\,\mathrm{d}t}
\newcommand{\rd}{{\hat r}}
\newcommand{\pd}{{\hat p}}
\newcommand{\gammab}{{\bar\gamma}}
\newcommand{\Pb}{\mbox{\rm (P)}\xspace}
\newcommand{\PbM}{\mbox{\rm (P$_M$)}\xspace}
\newcommand{\Pbr}{\mbox{\rm (P$_r$)}\xspace}
\newcommand{\Pbg}{\mbox{\rm (P$_\gamma$)}\xspace}
\newcommand{\Pbgp}{\mbox{\rm (P$_{\gamma'}$)}\xspace}
\newcommand{\Pbgk}{\mbox{\rm (P$_{\gamma_k}$)}\xspace}
\newcommand{\Qbg}{\mbox{\rm (Q$_\gamma$)}\xspace}
\newcommand{\uad}{{U_{ad}}}
\newcommand{\kad}{{K_{ad}}}
\newcommand{\uadg}{{U_\gamma}}
\newcommand{\uadgk}{{U_{\gamma_k}}}
\newcommand{\uadgp}{{U_{\gamma'}}}
\newcommand{\proj}{\operatorname{Proj}}
\newcommand{\sign}{\operatorname{sign}}
\newcommand{\supp}{\operatorname{supp}}
\numberwithin{equation}{section}
\numberwithin{theorem}{section}
\begin{document}

\title{Optimal Control of Semilinear Parabolic Equations with Non-smooth Pointwise-Integral Control Constraints in Time-Space.\thanks{The first author was supported by \tcr{MCIN/ AEI/10.13039/501100011033/ under research projects MTM2017-83185-P and PID2020-114837GB-I00}. The second was supported by the ERC advanced grant 668998 (OCLOC) under the EU’s H2020 research program.}
}
\author{Eduardo Casas\\
	Departmento de Matem\'{a}tica Aplicada y Ciencias de la Computaci\'{o}n, \\
	 E.T.S.I. Industriales y de Telecomunicaci\'on, \\
	  Universidad de Cantabria, 39005 Santander, Spain \\
	  eduardo.casas@unican.es\\
	 \and Karl Kunisch \\
	Institute for Mathematics and Scientific Computing, \\
	University of Graz, Heinrichstrasse 36, A-8010 Graz, Austria, \\
	and \\
	Johann Radon Institute, Austrian Academy of Sciences, Linz, Austria.\\
    karl.kunisch@uni-graz.at }

\date{}
\maketitle

\begin{abstract}
	This work concentrates  on a class of optimal control problems for semilinear parabolic equations subject to control constraint of the form $\|u(t)\|_{L^1(\Omega)} \le \gamma$ for $t \in (0,T)$. This limits the total control that can be applied to the system at any instant of time. The $L^1$-norm of the constraint leads to sparsity of the control in space, for the time instants when the constraint is active. Due to the non-smoothness of the constraint, the analysis of the control problem requires new techniques. Existence of a solution, first and second order optimality conditions, and regularity of the optimal control are proved. Further, stability of the optimal controls with respect to $\gamma$ is investigated on the basis of different second order conditions.
\end{abstract}

{\em {Keywords:}} Optimal control, semilinear parabolic equations, non-smooth control constraints, first and second order optimality conditions

{\em { AMS Classification:}} 35K58,  49J20, 49J52, 49K20

\section{Introduction}
\label{S1}
We study the optimal control problem
\[
\Pb \quad  \inf_{u \in \uad \cap L^\infty(Q)} J(u):= \frac{1}{2}\int_Q (y_u(x,t) - y_d(x,t))^2\dx\dt + \frac{\kappa}{2}\int_Q u(x,t)^2\dx\dt,
\]
where $\kappa > 0$,
\[
\uad = \{u \in L^\infty(0,T;L^1(\Omega)) : \|u(t)\|_{L^1(\Omega)} \le \gamma \text{ for a.a. } t \in (0,T)\}
\]
with $0 < \gamma < +\infty$, and $y_u$ is the solution of the  semilinear parabolic equation
\begin{equation}
	\left\{\begin{array}{ll}\displaystyle\frac{\partial y}{\partial t} + Ay + a(x,t,y) =  u & \mbox{in } Q = \Omega \times (0,T),\\y = 0  \mbox{ on } \Sigma = \Gamma \times (0,T),& y(0) = y_0 \mbox{ in } \Omega.\end{array}\right.
	\label{E1.1}
\end{equation}
with
\[
Ay=-\sum_{i,j=1}^{n}\partial_{x_j}(a_{ij}(x)\partial_{x_i}y).
\]
We assume that $\Omega$ is a bounded, connected, and open subset of $\mathbb{R}^n$, $n = 2$ or 3, with a Lipschitz boundary $\Gamma$, and that  $0 < T < \infty$ is fixed.

The precise conditions on the nonlinearity $a$ will be given below. Suffice it to say at this moment  that strong nonlinearities such as $\exp (y)$, $\sin(y)$, or polynomial nonlinearities with positive leading term of odd degree will be admitted. A first difficulty that arises in treating $\Pb$ relates to the proof of existence of an optimal control. The reader could think of choosing $L^2(Q)$ as the convenient space to prove the existence of a solution because of the coercivity of $J$ on this space and since the constraint defines a closed and convex subset of $L^2(Q)$. However, the selection of controls in $L^2(Q)$ is not appropriate to deal with the non-linearity in the sate equation. Indeed, even if we can prove the existence of a solution of the state equation, its regularity is not enough (it is not an element of $L^\infty(Q)$, in general) to get the differentiability of the relation control to state. Looking at the control constraint and the cost functional, a second possibility is to consider $L^\infty(0,T;L^2(\Omega))$ as control space. But this is not a reflexive Banach space and, consequently, the proof of existence of a solution to \Pb cannot be done by standard techniques. Nevertheless, we can prove existence of solutions in the spaces $L^r(0,T;L^2(\Omega))$ for all $r > \frac{4}{4-n}$. Moreover, all these solutions belong to $L^\infty(Q)$. This leads us to formulate the control problem in $L^\infty(Q)$; see Remark \eqref{R4.2}. To deal with the non-linearity of the state equation in the proof of a solution to \Pb in $L^\infty(Q)$, one approach consists in introducing artificial bound constraints on the control and prove that they are inactive as the artificial constraint parameter is large enough; see, for instance, \cite{CMR2017}. In our case, this would lead to two control constraints with two Lagrange multipliers in the dual of $L^\infty$. This makes the proof of boundedness of the optimal control very difficult. In this work we avoid such a technique and rather modify (truncate) the non-linear term of the state equation and prove that for a large truncation parameter the cut off is not active on the optimal state.

A second difficulty results from the non-differentiability of the constraint on the control in the definition  of $U_{ad}$. This is a natural constraint since it models a volumetric restriction, which represents a limit to the total amount of control acting at any time $t$. This technological constraint is an alternative to pointwise or to energy constraints which have been considered previously in the literature. Moreover, the $L^1-$ norm in space leads to a spatially sparsifying effect for the solutions. It is different from the type of sparsification which results when considering such terms in the cost. While for the former, sparsification takes place only after the control becomes active, for the latter it takes place regardless of the norm of the control.  For problem $\Pb$ the  sparsity effect is described  by the level set characterized by the functional values of the adjoint state at the height of the supremum norm of the multiplier associated to the control constraint in \Pb; see Corollary \ref{C3.3}. We point out that while  the $L^2$ norm appearing in the cost influences the optimal solution, it  does not eliminate the sparsifying effect of $L^1-$terms, regardless of whether they appear  in the cost or as a constraint. The literature on problems with an $L^1$ or measure-valued norm in the cost is quite rich, so we can only give selected references which consider evolutionary problems
\cite{Casas-Chrysafinos2016,CCK2013,CHW2017,Casas-Kunisch2016,Casas-Kunisch2017,CMR2017,CRT2014,CVZ2015,GKS2021,HSW2012,KPV2014,KTV2016,LVW2020}. In all these papers, either there are no control constraints or they are box constraints. In \cite{GuMa00b}, the authors study a control problem for the evolutionary Navier-Stokes system under the smooth control constraint $\|u(t)\|^2_{L^2(\Omega)} \le 1$, which is smooth and not sparsifying. In \cite{Casas-Kunisch2021}, the control of the 2d evolutionary Navier-Stokes system is analyzed, where the controls are measured valued functions subject to the constraint $\|u(t)\|_{M(\Omega)} \le \gamma$.

The structure of the paper is the following. The analysis of the state equation and its first and second derivatives with respect to the controls is carried out in Section 2. Here special attention is paid to the $L^\infty(Q)$ regularity of the state variable. In Section 3 first order optimality conditions are derived and the structural properties of the involved functions are analyzed. In particular, the regularity of the optimal control is proved, which is a crucial point for the numerical analysis of the control problem. The proof of existence of an optimal control is given in Section 4. Section 5 is devoted to necessary and sufficient second order optimality conditions. In the final section, as a consequence of the second order condition, H\"older and Lipschitz stability of local solutions with respect to the control bound $\gamma$ is investigated.

\section{Analysis of the state equation}
\label{S2}

In this section we establish the well posedness of the state equation, the regularity of the solution, and the differentiable dependence of the solution with respect to the control. To this end we make the following assumptions.

We assume that $y_0 \in L^\infty(\Omega)$, $a_{ij} \in L^\infty(\Omega)$ for every $1 \le i, j \le n$, and
\begin{equation}
	\Lambda_A|\xi|^2\leq\sum_{i,j=1}^n a_{ij}(x)\xi_i\xi_j\ \ \mbox{
	}\forall \xi\in\mathbb{R}^n \ \mbox{ for a.a. } x\in\Omega
	\label{E2.1}
\end{equation}
for some $\Lambda_A > 0$. We also assume that $a:Q\times\mathbb R\to\mathbb R$ is a Carath\'eodory function of class $C^2$ with respect to the last variable satisfying the following properties:
\begin{align}
	&\exists  C_a \in\mathbb R  : \frac{\partial a}{\partial y}(x,t,y)\geq C_a\ \forall y\in\mathbb R,\label{E2.2}\\
	&a(\cdot,\cdot,0)\in L^{\rd}(0,T;L^{\pd}(\Omega)), \text{ with } \rd, \pd \ge 2 \ \text{ and }\  \frac{1}{\rd} + \frac{n}{2\pd} < 1,\label{E2.3}\\
	&\forall M>0\ \exists C_{a,M}>0 : \left|\frac{\partial^j a}{\partial y^j}(x,t,y)\right|\leq C_{a,M}\ \forall |y|\leq M\mbox{ and } j=1,2,\label{E2.4}\\
	&\begin{array}{l}\!\!\!\forall \rho>0 \text{ and } \forall M>0\ \exists \varepsilon>0 \text{ such that}\\
		\displaystyle\!\!\!\left|\frac{\partial^2 a}{\partial y^2}(x,t,y_1)-\frac{\partial^2 a}{\partial y^2}(x,t,y_2)\right|<\rho\ \ \ \forall|y_1|,|y_2|\leq M \text{ with } \ |y_1-y_2|<\varepsilon,\end{array}\label{E2.5}
\end{align}
for almost all $(x,t) \in Q$.

As usual $W(0,T)$ denotes the Hilbert space
\[
W(0,T) = \{y \in L^2(0,T;H_0^1(\Omega)) : \frac{\partial y}{\partial t} \in L^2(0,T;H^{-1}(\Omega))\}.
\]
We recall that $W(0,T)$ is continuously embedded in $C([0,T];L^2(\Omega))$ and compactly embedded in $L^2(Q)$.

\begin{theorem}
	Under the previous assumptions, for every $u \in L^r(0,T;L^p(\Omega))$ with $\frac{1}{r} + \frac{n}{2p} < 1$ and $r, p \ge 2$ there exists a unique solution $y_u \in L^\infty(Q) \cap W(0,T)$ of \eqref{E1.1}. Moreover, the following estimates hold
	\begin{align}
		&\|y_u\|_{L^\infty(Q)} \le \eta\big(\|u\|_{L^r(0,T;L^p(\Omega))} + \|a(\cdot,\cdot,0)\|_{L^\rd(0,T;L^\pd(\Omega))} + \|y_0\|_{L^\infty(\Omega)}\big),\label{E2.6}\\
		&\|y_u\|_{C([0,T];L^2(\Omega))} + \|y_u\|_{L^2(0,T;H_0^1(\Omega))}\notag\\
		&\hspace{2.4cm}\le K\big(\|u\|_{L^2(Q)} + \|a(\cdot,\cdot,0)\|_{L^2(Q)} + \|y_0\|_{L^2(\Omega)}\big),\label{E2.7}
	\end{align}
	for a monotone non-decreasing function $\eta:[0,\infty) \longrightarrow [0,\infty)$ and some constant $K$ both independent of $u$.
	\label{T2.1}
\end{theorem}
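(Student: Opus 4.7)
The plan is to establish existence via a truncation argument, derive the energy and $L^\infty$ estimates with constants independent of the truncation, and deduce uniqueness from the monotonicity condition \eqref{E2.2}.

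I would fix $M>0$ and consider the truncated nonlinearity $a_M(x,t,y):=a(x,t,\proj_{[-M,M]}(y))$, which is globally Lipschitz in $y$ with constant $C_{a,M}$ from \eqref{E2.4} and inherits the one-sided bound $\partial_y a_M\ge \min(C_a,0)$ almost everywhere. For the truncated problem
\[
\partial_t y + Ay + a_M(x,t,y) = u, \qquad y|_\Sigma = 0, \qquad y(0)=y_0,
\]
existence and uniqueness of a solution $y_M\in W(0,T)$ is standard, e.g. via Galerkin approximation or a Banach fixed-point argument on short time intervals extended by an a priori energy bound. Testing against $y_M$, using \eqref{E2.1} together with the inequality $a_M(x,t,y_M)\,y_M\ge \min(C_a,0)\,y_M^2 + a(x,t,0)\,y_M$, and invoking Gronwall yields \eqref{E2.7} with $K$ independent of $M$.

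For the $L^\infty$ estimate \eqref{E2.6} I would rewrite the truncated equation as the linear parabolic problem $\partial_t y_M + Ay_M + b_M(x,t)\,y_M = u - a(x,t,0)$ with $b_M(x,t):=\int_0^1 \partial_y a_M(x,t,s\,y_M(x,t))\,ds\ge \min(C_a,0)$. The right-hand side is the sum of two functions in $L^r(0,T;L^p(\Omega))$ and $L^\rd(0,T;L^\pd(\Omega))$ respectively, both exponent pairs satisfying the Aronson–Serrin condition (the second by \eqref{E2.3}). After the substitution $\tilde y_M := e^{-\lambda t} y_M$ with $\lambda:=\max(-C_a,0)$, the zeroth-order coefficient becomes non-negative, and one applies the classical Aronson–Serrin $L^\infty$ bound for linear parabolic equations (equivalently, De Giorgi–Stampacchia iteration on the truncations $(y_M-k)^+$ and $(-y_M-k)^+$ closed by Gagliardo–Nirenberg interpolation in the parabolic scale) to obtain \eqref{E2.6} with a monotone non-decreasing $\eta$ and a constant independent of $M$. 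The main obstacle is exactly verifying this $M$-independence: it works because only the lower bound on $b_M$ enters the Aronson–Serrin estimate, not the $L^\infty$ norm of $\partial_y a_M$ on $[-M,M]$, which blows up with $M$ in general.

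Choosing then $M$ strictly greater than $\eta\bigl(\|u\|_{L^r(0,T;L^p(\Omega))} + \|a(\cdot,\cdot,0)\|_{L^\rd(0,T;L^\pd(\Omega))} + \|y_0\|_{L^\infty(\Omega)}\bigr)$, the bound forces $\|y_M\|_{L^\infty(Q)}<M$, so $\proj_{[-M,M]}(y_M)=y_M$ and $y_M$ solves \eqref{E1.1}. For uniqueness, if $y_1,y_2\in W(0,T)\cap L^\infty(Q)$ are two solutions, the difference $w:=y_1-y_2$ satisfies $w(0)=0$ and $\partial_t w + Aw + (a(\cdot,\cdot,y_1)-a(\cdot,\cdot,y_2))=0$. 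Testing against $w$ and using $(a(x,t,y_1)-a(x,t,y_2))(y_1-y_2)\ge C_a(y_1-y_2)^2$ (by the mean value theorem and \eqref{E2.2}), the coercivity of $A$, and Gronwall yields $w\equiv 0$.
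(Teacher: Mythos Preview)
Your argument is correct, but it follows a different route from the paper. The paper does not truncate the nonlinearity; instead it decomposes $y_u=z+\psi$ where $z$ solves the linear problem $\partial_t z+Az=u$, $z(0)=y_0$ (giving the $L^\infty$ bound for $z$ directly from Ladyzhenskaya--Solonnikov--Ural'tseva), and $\psi=e^{|C_a|t}w$ with $w$ solving a semilinear equation whose nonlinearity $b(x,t,s)=e^{-|C_a|t}[a(x,t,e^{|C_a|t}s+z)-a(x,t,z)]+|C_a|s$ satisfies $b(\cdot,\cdot,0)=0$ and $\partial_s b\ge 0$. The $L^\infty$ estimate for $w$ then comes from the monotone theory (Tr\"oltzsch) applied to the right-hand side $-e^{-|C_a|t}a(x,t,z)$, which is controlled via the mean value theorem by $\|z\|_{L^\infty(Q)}$ and $\|a(\cdot,\cdot,0)\|_{L^{\hat r}(L^{\hat p})}$. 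The energy estimate \eqref{E2.7} is obtained separately by the substitution $\phi=e^{-|C_a|t}y_u$, which renders the effective nonlinearity monotone so that no Gronwall factor appears.

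Your truncation approach is also valid, and the crucial point you flag---that Aronson--Serrin only needs a one-sided lower bound on the zeroth-order coefficient $b_M$, not an $L^\infty$ bound---is exactly what makes the $L^\infty$ estimate $M$-independent. The paper's decomposition has the advantage of never introducing an auxiliary parameter and of reducing everything to off-the-shelf results (linear $L^\infty$ theory plus monotone semilinear theory). Your approach is arguably more direct once one has the Aronson--Serrin estimate at hand, and it is in the same spirit as the truncation argument the authors use later in Section~4 for a different purpose. One minor point: your energy estimate \eqref{E2.7} will carry a Gronwall factor $e^{|C_a|T}$, whereas the paper's exponential substitution absorbs this into the change of variable; either way $K$ is independent of $u$, as required.
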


\begin{proof}
	We decompose the state equation into two parts. First, we consider
	\begin{equation}
		\left\{\begin{array}{ll}\displaystyle\frac{\partial z}{\partial t} + Az  =  u & \mbox{in } Q,\\z = 0  \mbox{ on } \Sigma,& z(0) = y_0 \mbox{ in } \Omega.\end{array}\right.
		\label{E2.8}
	\end{equation}
	It is well known that it has a unique solution $z \in W(0,T) \cap L^\infty(Q)$.
	Moreover, we have the estimates
	\begin{align}
		&\|z\|_{W(0,T)} \le C_W(\|u\|_{L^2(Q)} + \|y_0\|_{L^2(\Omega)}),\label{E2.9}\\
		&\|z\|_{L^\infty(Q)} \le C_\infty(\|u\|_{L^r(0,T;L^p(\Omega))} + \|y_0\|_{L^\infty(\Omega)});\label{E2.10}
	\end{align}
	see, for instance, \cite[Chapter III]{Lad-Sol-Ura68}. Now, we define $b:Q \times \mathbb{R} \longrightarrow \mathbb{R}$ as follows
	\[
	b(x,t,s) = \text{\em e}^{-|C_a|t}[a(x,t,\text{\em e}^{|C_a|t}s + z(x,t)) - a(x,t,z(x,t))] + |C_a|s,
	\]
	where $C_a$ is as in \eqref{E2.2}. Then, $b(x,t,0) = 0$ and according to \eqref{E2.2}
	\[
	\frac{\partial b}{\partial s}(x,t,s) = \frac{\partial a}{\partial s}(x,t,\text{\em e}^{|C_a|t}s + z(x,t)) + |C_a| \ge 0.
	\]
	We consider the equation
	\begin{equation}
		\left\{\begin{array}{ll}\displaystyle\frac{\partial w}{\partial t} + Aw + b(x,t,w) =  -\text{\em e}^{-|C_a|t}a(x,t,z(x,t)) & \mbox{in } Q,\\w = 0  \mbox{ on } \Sigma, \ \ w(0) = 0 \mbox{ in } \Omega.&\end{array}\right.
		\label{E2.11}
	\end{equation}
	Due to the properties of $b$, the existence and uniqueness of a solution $w \in L^\infty(Q) \cap W(0,T)$ is well known; see \cite[Theorem 5.5]{Troltzsch2010}. Moreover, the following estimates hold
	\begin{align}
		&\|w\|_{W(0,T)} \le C_b(\|a(\cdot,\cdot,z)\|_{L^2(Q)} + \|b(\cdot,\cdot,w)\|_{L^2(Q)}),\label{E2.12}\\
		&\|w\|_{L^\infty(Q)} \le C'_\infty\|a(\cdot,\cdot,z)\|_{L^\rd(0,T;L^\pd(\Omega))}.\label{E2.13}
	\end{align}
	Denoting $M = \|z\|_{L^\infty(Q)}$ and using \eqref{E2.4} we infer with the mean value theorem
	\begin{align*}
		&|a(x,t,z(x,t))| \le |a(x,t,z(x,t)) - a(x,t,0)| + |a(x,t,0)|\\
		& = \Big|\frac{\partial a}{\partial y}(x,t,\theta(x,t)z(x,t))z(x,t)\Big| + |a(x,t,0)| \le C_{a,M}M + |a(x,t,0)|.
	\end{align*}
	Combining this with \eqref{E2.10} and \eqref{E2.13} we get
	\begin{equation}
		\|w\|_{L^\infty(Q)} \le \sigma(\|u\|_{L^r(0,T;L^p(\Omega))} + \|a(\cdot,\cdot,0)\|_{L^\rd(0,T;L^\pd(\Omega))} + \|y_0\|_{L^\infty(\Omega)}\big),
		\label{E2.14}
	\end{equation}
	for a non-decreasing function $\sigma:[0,\infty) \longrightarrow [0,\infty)$.
	
	If we set $w = \text{\em e}^{-|C_a|t}\psi$ and insert this in \eqref{E2.11}, we infer
	\begin{equation}
		\left\{\begin{array}{ll}\displaystyle\frac{\partial\psi}{\partial t} + A\psi + a(x,t,z(x,t) + \psi) =  0 & \mbox{in } Q,\\\psi = 0  \mbox{ on } \Sigma,\ \ \psi(0) = 0 \mbox{ in } \Omega.&\end{array}\right.
		\label{E2.15}
	\end{equation}
	Adding \eqref{E2.8} and \eqref{E2.15} we deduce that $y_u = z + \psi$ solves \eqref{E1.1}. Moreover, any solution of \eqref{E1.1} is the sum of the solutions of \eqref{E2.8} and \eqref{E2.15}. Since these equations have a unique solution, the uniqueness of $y_u$ follows. Furthermore, \eqref{E2.10} and \eqref{E2.14} imply \eqref{E2.6}.
	
	To prove \eqref{E2.7}, we take $\phi= \text{\em e}^{-|C_a|t}y_u$ and introduce the function $f:Q \times \mathbb{R} \longrightarrow \mathbb{R}$ defined by
	\[
	f(x,t,s) = \text{\em e}^{-|C_a|t}[a(x,t,\text{\em e}^{|C_a|t}s) - a(x,t,0)] + |C_a|s.
	\]
	Then, $\phi$ satisfies
	\begin{equation}
		\left\{\begin{array}{ll}\displaystyle\frac{\partial\phi}{\partial t} + A\phi + f(x,t,\phi) = \text{\em e}^{-|C_a|t}[u -a(x,t,0)] & \mbox{in } Q,\\\phi = 0  \mbox{ on } \Sigma, \ \ \phi(0) = y_0 \mbox{ in } \Omega.&\end{array}\right.
		\label{E2.16}
	\end{equation}
	Since $f(x,t,0) = 0$ and $\frac{\partial f}{\partial s}(x,t,s) \ge 0$, multiplying the above equation by $\phi$, integrating in $\Omega$, and using \eqref{E2.1} we get
	\begin{align*}
		&\frac{1}{2}\frac{d}{dt}\|\phi(t)\|^2_{L^2(\Omega)} + \Lambda_A\int_\Omega|\nabla\phi(t)|^2\dx\\
		&\le \frac{1}{2}\frac{d}{dt}\|\phi(t)\|^2_{L^2(\Omega)} + \sum_{i,j = 1}^n\int_\Omega a_{ij}\partial_{x_i}\phi(t)\partial_{x_j}\phi(t)\dx + \int_\Omega f(x,t,\phi(t))\phi(t)\dx\\
		&= \int_\Omega\text{\em e}^{-|C_a|t}(u - a(x,t,0))\phi\dx \le \big(\|u(t)\|_{L^2(\Omega)} + \|a(\cdot,t,0)\|_{L^2(\Omega)}\big)\|\phi(t)\|_{L^2(\Omega)}.
	\end{align*}
	Estimate \eqref{E2.7} follows from this inequality as usual.
	\qed\end{proof}

We apply Theorem \ref{T2.1} with $p = 2$ and $r \in \big(\frac{4}{4-n},\infty\big]$. Observe that $\frac{1}{r} + \frac{n}{4} < 1$ and $r > 2$. Then, the mapping $G:L^r(0,T;L^2(\Omega)) \longrightarrow L^\infty(Q) \cap W(0,T)$ given by $G(u) = y_u$ solution of \eqref{E1.1} is well defined. We have the following differentiability properties of $G$.

\begin{theorem}
	The mapping $G$ is of class $C^2$. For $u,v,v_1,v_2 \in L^r(0,T;L^2(\Omega))$ the derivatives $z_v = G'(u)v$ and $z_{v_1,v_2} = G''(u)(v_1,v_2)$ are the solutions of the equations
	\begin{align}
		&\left\{\begin{array}{l}\displaystyle\frac{\partial z_v}{\partial t} + A z_v + \frac{\partial a}{\partial y}(x,t,y_u)z_v =  v \ \mbox{ in } Q,\\ z_v = 0  \mbox{ on } \Sigma,\ \ z_v(0) = 0 \mbox{ in } \Omega,\end{array}\right.
		\label{E2.17}\\
		&\left\{\begin{array}{l}\displaystyle\frac{\partial z_{v_1,v_2}}{\partial t} + A z_{v_1,v_2} + \frac{\partial a}{\partial y}(x,t,y_u)z_{v_1,v_2} + \frac{\partial^2a}{\partial y^2}(x,t,y_u)z_{v_1}z_{v_2} = 0 \ \mbox{ in } Q,\\ z_{v_1,v_2} = 0  \mbox{ on } \Sigma,\ \ z_{v_1,v_2}(0) = 0 \mbox{ in } \Omega.\end{array}\right.
		\label{E2.18}
	\end{align}
	\label{T2.2}
\end{theorem}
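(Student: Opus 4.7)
The plan is to apply the implicit function theorem to a fixed-point reformulation of the state equation. Set $\mathcal{U}=L^r(0,T;L^2(\Omega))$ and $\mathcal{Y}=W(0,T)\cap L^\infty(Q)$, and let $S:L^r(0,T;L^2(\Omega))\times L^\infty(\Omega)\to\mathcal{Y}$ denote the solution operator $(f,z_0)\mapsto z$ of the linear equation $z_t+Az=f$ with $z|_\Sigma=0$ and $z(0)=z_0$; applying Theorem \ref{T2.1} with the trivial nonlinearity $a\equiv 0$, we see that $S$ is linear and continuous. Define
\[
\Phi:\mathcal{U}\times\mathcal{Y}\to\mathcal{Y},\qquad \Phi(u,y):=y-S\bigl(u-a(\cdot,\cdot,y),\,y_0\bigr),
\]
so that $\Phi(u,y_u)=0$ characterises the solution of \eqref{E1.1}.

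First I would verify that $\Phi$ is of class $C^2$. Since $S$ is linear, this reduces to showing that the Nemytskii operator $\mathcal{N}:y\mapsto a(\cdot,\cdot,y)$ is of class $C^2$ from $L^\infty(Q)$ into $L^\infty(Q)$. Hypothesis \eqref{E2.4} gives local boundedness of $\tfrac{\partial a}{\partial y}$ and $\tfrac{\partial^2 a}{\partial y^2}$, while \eqref{E2.5} furnishes the uniform modulus of continuity needed to establish continuity of $y\mapsto\tfrac{\partial^2 a}{\partial y^2}(\cdot,\cdot,y)$ as an operator on $L^\infty(Q)$. A standard Nemytskii argument then yields $\mathcal{N}'(y)\delta=\tfrac{\partial a}{\partial y}(\cdot,\cdot,y)\delta$ and $\mathcal{N}''(y)(\delta_1,\delta_2)=\tfrac{\partial^2 a}{\partial y^2}(\cdot,\cdot,y)\delta_1\delta_2$, each depending continuously on $y\in L^\infty(Q)$.

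Next I would check that $\Phi_y(u,y_u)$ is a topological isomorphism of $\mathcal{Y}$. For $g\in\mathcal{Y}$, the equation $\Phi_y(u,y_u)\delta=g$ reads $\delta+S\bigl(\tfrac{\partial a}{\partial y}(\cdot,\cdot,y_u)\delta,0\bigr)=g$, which, after the substitution $\eta:=g-\delta$, is equivalent to the linear parabolic problem
\[
\eta_t+A\eta+\tfrac{\partial a}{\partial y}(\cdot,\cdot,y_u)\,\eta=\tfrac{\partial a}{\partial y}(\cdot,\cdot,y_u)\,g,\qquad \eta|_\Sigma=0,\quad \eta(0)=0.
\]
Here $\tfrac{\partial a}{\partial y}(\cdot,\cdot,y_u)\in L^\infty(Q)$ by \eqref{E2.4} and $y_u\in L^\infty(Q)$, and the source lies in $L^\infty(Q)\hookrightarrow L^r(0,T;L^2(\Omega))$. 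Unique solvability in $\mathcal{Y}$ with continuous dependence on $g$ follows from Theorem \ref{T2.1}, since an $L^\infty$ lower-order coefficient is precisely of the type accommodated by the arguments that produced \eqref{E2.6}--\eqref{E2.7}.

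With these ingredients, the implicit function theorem delivers a $C^2$ local solution $u\mapsto y(u)$ that coincides with $G$ by the uniqueness part of Theorem \ref{T2.1}; hence $G\in C^2$. Differentiating $\Phi(u,G(u))=0$ once and twice in $u$ and rewriting the resulting fixed-point identities as parabolic equations produces exactly \eqref{E2.17} and \eqref{E2.18} for $z_v=G'(u)v$ and $z_{v_1,v_2}=G''(u)(v_1,v_2)$. The main technical obstacle is the $C^2$-regularity of $\mathcal{N}$ in the first step: continuity of $y\mapsto\tfrac{\partial^2 a}{\partial y^2}(\cdot,\cdot,y)$ as a map into $L^\infty(Q)$ is precisely where the modulus-of-continuity hypothesis \eqref{E2.5} earns its keep.
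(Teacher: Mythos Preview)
Your approach and the paper's are both implicit function theorem arguments, so the overall strategy is the same. The packaging differs: the paper works with a direct PDE map
\[
\mathcal{F}(y,w,u)=\Bigl(\tfrac{\partial y}{\partial t}+Ay+a(\cdot,\cdot,y)-u,\;y(0)-w\Bigr)
\]
from a graph-norm space $\mathcal{Y}=\{y\in W(0,T)\cap L^\infty(Q):\tfrac{\partial y}{\partial t}+Ay\in X\}$ into $X\times L^\infty(\Omega)$, where $X=L^{\hat r}(0,T;L^{\hat p}(\Omega))+L^r(0,T;L^2(\Omega))$, and checks that $\partial_y\mathcal{F}$ is an isomorphism. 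You instead invert the linear part via the solution operator $S$ and apply IFT to the fixed-point equation $\Phi(u,y)=0$. Both routes land on the same linearised equations \eqref{E2.17}--\eqref{E2.18}; your version avoids introducing the graph norm at the cost of relying on mapping properties of $S$.

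There is, however, a genuine type mismatch in your setup that you should repair. You assert that $\mathcal{N}:y\mapsto a(\cdot,\cdot,y)$ maps $L^\infty(Q)$ into $L^\infty(Q)$, but hypothesis \eqref{E2.3} only gives $a(\cdot,\cdot,0)\in L^{\hat r}(0,T;L^{\hat p}(\Omega))$, not $L^\infty(Q)$; consequently $u-a(\cdot,\cdot,y)$ need not lie in the stated domain $L^r(0,T;L^2(\Omega))$ of $S$. The fix is either to work with $\mathcal{N}_0(y)=a(\cdot,\cdot,y)-a(\cdot,\cdot,0)$ (which does land in $L^\infty(Q)$ by \eqref{E2.4} and the mean value theorem) and absorb the fixed term $a(\cdot,\cdot,0)$ into the data, or to enlarge the domain of $S$ to the sum space $X$ exactly as the paper does. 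Once this is corrected, your argument goes through; the $C^2$-smoothness of the Nemytskii part, and in particular your identification of \eqref{E2.5} as the hypothesis guaranteeing continuity of the second derivative in $L^\infty(Q)$, is spot on.
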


\begin{proof}
	Let us consider the Banach space
	\[
	\mathcal{Y} =\{y \in L^\infty(Q) \cap W(0,T) : \frac{\partial y}{\partial t} + Ay \in X\},
	\]
	where $X = L^\rd(0,T;L^\pd(\Omega)) + L^r(0,T;L^2(\Omega))$, endowed with the norm
	\[
	\|y\|_{\mathcal{Y}} = \|y\|_{L^\infty(Q)} + \|y\|_{W(0,T)} + \|\frac{\partial y}{\partial t} + Ay\|_X.
	\]
	Now, we define the mapping
	\begin{align*}
		&\mathcal{F}:\mathcal{Y} \times L^\infty(\Omega) \times L^r(0,T;L^2(\Omega)) \longrightarrow X \times L^\infty(\Omega)\\
		&\mathcal{F}(y,w,u) =\big(\frac{\partial y}{\partial t} + A y + a(\cdot,\cdot,y) - u, y(0) - w\big).
	\end{align*}
	We have that $\mathcal{F}$ is of class $C^2$, $\mathcal{F}(y_u,y_0,u) = (0,0)$ for every $u \in L^r(0,T;L^2(\Omega))$, and
	\begin{align*}
		&\frac{\partial\mathcal{F}}{\partial y}(y_u,y_0,u):\mathcal{Y} \longrightarrow X \times L^\infty(\Omega)\\
		&\frac{\partial\mathcal{F}}{\partial y}(y_u,y_0,u)z = \big(\frac{\partial z}{\partial t} + A z + \frac{\partial a}{\partial y}(\cdot,\cdot,y_u)z, z(0)\big)
	\end{align*}
	is an isomorphism. Hence, an easy application of the implicit function theorem proves the result.
	\qed\end{proof}

As a consequence of the above theorem and the chain rule we infer the differentiability of the mapping $J:L^r(0,T;L^2(\Omega)) \longrightarrow \mathbb{R}$. From now on, we assume
\begin{equation}
	y_d \in L^\rd(0,T;L^\pd(\Omega)),
	\label{E2.19}
\end{equation}
where $\rd$ and $\pd$ are defined in \eqref{E2.3}.
\begin{corollary}
	If $r > \frac{4}{4-n}$, then $J$ is of class $C^2$ and its derivatives are given by the expressions
	\begin{align}
		&J'(u)v = \int_Q(\varphi + \kappa u)v\dx\dt,\label{E2.20}\\
		&J''(u)(v_1,v_2) = \int_Q\Big[\big(1 - \frac{\partial^2a}{\partial y^2}(x,t,y_u)\varphi\big)z_{v_1}z_{v_2} + \kappa v_1v_2\Big]\dx\dt,\label{E2.21}
	\end{align}
	where $z_{v_i} = G'(u)v_i$, $i = 1, 2$, and $\varphi \in C(\bar Q) \cap H^1(Q)$ is the solution of the adjoint state equation
	\begin{equation}
		\left\{\begin{array}{l}\displaystyle-\frac{\partial\varphi}{\partial t} + A^*\varphi + \frac{\partial a}{\partial y}(x,t,y_u)\varphi =  y_u - y_d\ \mbox{ in } Q,\\ \varphi = 0  \mbox{ on } \Sigma,\ \ \varphi(T) = 0 \mbox{ in } \Omega.\end{array}\right.
		\label{E2.22}
	\end{equation}
	\label{C2.1}
\end{corollary}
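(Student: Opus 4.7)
The plan is to read off both derivatives from the chain rule applied to the decomposition
$$J(u) = \tfrac12 \|G(u)-y_d\|_{L^2(Q)}^2 + \tfrac{\kappa}{2}\|u\|_{L^2(Q)}^2,$$
and then rewrite the result with the adjoint state $\varphi$. First, since $\rd,\pd \ge 2$ by \eqref{E2.3} and $|Q|<\infty$, one has $y_d \in L^{\rd}(0,T;L^{\pd}(\Omega)) \subset L^2(Q)$, so $J$ is well defined on $L^r(0,T;L^2(\Omega))$. Because $G\colon L^r(0,T;L^2(\Omega))\to L^\infty(Q)\cap W(0,T)$ is of class $C^2$ by Theorem \ref{T2.2} and the tracking term is a smooth quadratic on $L^2(Q)$, the composition rule immediately gives $J\in C^2$ and, by direct differentiation,
\begin{align*}
J'(u)v &= \int_Q(y_u-y_d)\,z_v\dx\dt + \kappa\int_Q uv\dx\dt,\\
J''(u)(v_1,v_2) &= \int_Q z_{v_1}z_{v_2}\dx\dt + \int_Q(y_u-y_d)\,z_{v_1,v_2}\dx\dt + \kappa\int_Q v_1v_2\dx\dt,
\end{align*}
with $z_v$, $z_{v_1,v_2}$ as in \eqref{E2.17}--\eqref{E2.18}.

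The next step is to introduce $\varphi$ from \eqref{E2.22} and verify the claimed regularity. Since $y_u\in L^\infty(Q)$, assumption \eqref{E2.4} implies $\frac{\partial a}{\partial y}(\cdot,\cdot,y_u)\in L^\infty(Q)$, while the right hand side $y_u-y_d$ lies in $L^\rd(0,T;L^\pd(\Omega))$ with $\frac{1}{\rd} + \frac{n}{2\pd} < 1$. Reversing time and applying the linear parabolic theory of \cite[Chapter III]{Lad-Sol-Ura68} (analogously to the derivation of \eqref{E2.9}--\eqref{E2.10}) yields $\varphi \in W(0,T) \cap L^\infty(Q)$; the H\"older regularity results of the same reference upgrade this to $\varphi\in C(\bar Q)$, and $\varphi\in H^1(Q)$ follows from the equation after bounding the zero-order terms in $L^2(Q)$ by the $L^\infty$ bound on $\varphi$ and on $\frac{\partial a}{\partial y}(\cdot,\cdot,y_u)$.

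Finally, I would eliminate $z_v$ and $z_{v_1,v_2}$ by duality. Testing \eqref{E2.22} with $z_v$, testing \eqref{E2.17} with $\varphi$, and subtracting (both $\varphi(T)=0$ and $z_v(0)=0$ vanish in the integration-by-parts) gives
$$\int_Q(y_u-y_d)\,z_v\dx\dt = \int_Q \varphi\,v\dx\dt,$$
which, combined with the formula for $J'(u)v$, proves \eqref{E2.20}. Repeating the duality with $z_{v_1,v_2}$ in place of $z_v$, and using the extra inhomogeneity $-\frac{\partial^2 a}{\partial y^2}(x,t,y_u)z_{v_1}z_{v_2}$ appearing in \eqref{E2.18}, yields
$$\int_Q(y_u-y_d)\,z_{v_1,v_2}\dx\dt = -\int_Q \frac{\partial^2 a}{\partial y^2}(x,t,y_u)\,\varphi\,z_{v_1}z_{v_2}\dx\dt,$$
and substitution produces \eqref{E2.21}.

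I expect the only nontrivial step to be the regularity claim $\varphi\in C(\bar Q)\cap H^1(Q)$: the boundedness and continuity rely precisely on the sharp integrability condition $\frac{1}{\rd}+\frac{n}{2\pd}<1$ that was imposed in \eqref{E2.3} and \eqref{E2.19}. Everything else is a routine application of the chain rule together with the integration-by-parts identity linking $\varphi$ to $z_v$ and $z_{v_1,v_2}$.
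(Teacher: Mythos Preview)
Your proposal is correct and follows exactly the approach the paper indicates: the paper does not spell out a proof but simply states the corollary ``as a consequence of the above theorem and the chain rule,'' and then notes that the regularity $\varphi\in C(\bar Q)\cap H^1(Q)$ follows from Theorems III-6.1 and III-10.1 of \cite{Lad-Sol-Ura68}. Your write-up fills in precisely the standard details (chain rule, adjoint duality via integration by parts, and the cited parabolic regularity), so there is nothing to add.
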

Above $A^*$ denotes the adjoint operator of $A$
\[
A^*\varphi=-\sum_{i,j=1}^{n}\partial_{x_j}(a_{ji}(x)\partial_{x_i}\varphi).
\]

The regularity $\bar\varphi \in C(\bar Q) \cap H^1(Q)$ follows from Theorems III-6.1 and III-10.1 of \cite{Lad-Sol-Ura68}. Moreover, we observe that $J'(u)$ and $J''(u)$ can be extended to continuous linear and bilinear forms $J'(u):L^2(Q) \longrightarrow \mathbb{R}$ and $J''(u): L^2(Q) \times L^2(Q) \longrightarrow \mathbb{R}$ for every $u \in L^r(0,T;L^2(\Omega))$.

\begin{remark}
	Hypotheses \eqref{E2.1}--\eqref{E2.5} are satisfied, for instance, for the nonlinearity $a(y)= \exp(y)$. They are also satisfied for  $a(y)= (y- z_1)(y- z_2)(y- z_3)$ for constants $z_i$, with $i\in \{1,2,3\}$. This latter nonlinearity is known in neurology as Nagumo equation and in physical chemistry as Schl\"ogl model. Formulating the optimal control problem with an $L^1(\Omega)$ constraint implies that one looks for the action of a controlling laser whose optimal support is small; see \cite{GMT2020}.
	\label{R2.1}
\end{remark}

\section{Existence of optimal controls and first order optimality conditions}
\label{S3}

Since the control problem \Pb is not convex, we need to distinguish between local and global minimizers. We call $\bar u$ a local minimizer for \Pb in the $L^r(0,T;L^2(\Omega))$ sense with $r > \frac{4}{4-n}$ if $\bar u \in \uad \cap L^\infty(Q)$ and there exists $\varepsilon > 0$ such that
\begin{equation}
	J(\bar u) \le J(u)\quad \forall u \in B_\varepsilon \cap \uad,
	\label{E3.1}
\end{equation}
where
\[
B_\varepsilon =\{u \in L^r(0,T;L^2(\Omega)) : \|u - \bar u\|_{L^r(0,T;L^2(\Omega))} \le \varepsilon\}.
\]
It is immediate to check that if $\bar u$ is a local minimizer in the $L^r(0,T;L^2(\Omega))$ sense, then it is also a local minimizer in the $L^{r'}(0,T;L^2(\Omega))$ sense for every $r < r' \le \infty$.

\begin{theorem}
	There exists at least one solution of \Pb. Moreover, for every local minimizer $\bar u$ in the $L^r(0,T;L^2(\Omega))$ sense with $r > \frac{4}{4-n}$, there exist $\bar y \in L^2(0,T;H_0^1(\Omega)) \cap L^\infty(Q)$, $\bar\varphi \in C(\bar Q) \cap H^1(Q)$, and $\bar\mu \in L^\infty(Q)$ such that
	\begin{align}
		&\left\{\begin{array}{l}\displaystyle\frac{\partial\bar y}{\partial t} + A\bar y + a(x,t,\bar y) =  \bar u \ \mbox{ in } Q,\\ \bar y = 0  \mbox{ on } \Sigma,\ \ \bar y(0) = y_0 \mbox{ in } \Omega,\end{array}\right.
		\label{E3.2}\\
		&\left\{\begin{array}{l}\displaystyle-\frac{\partial\bar\varphi}{\partial t} + A^*\bar\varphi + \frac{\partial a}{\partial y}(x,t,\bar y)\bar\varphi =  \bar y - y_d\ \mbox{ in } Q,\\ \bar\varphi = 0  \mbox{ on } \Sigma,\ \ \bar\varphi(T) = 0 \mbox{ in } \Omega,\end{array}\right.
		\label{E3.3}\\
		&\int_Q\bar\mu(u - \bar u)\dx\dt \le 0\quad \forall u \in \uad,
		\label{E3.4}\\
		&\bar\varphi + \kappa\bar u + \bar\mu = 0.
		\label{E3.5}
	\end{align}
	\label{T3.1}
\end{theorem}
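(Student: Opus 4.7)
I would derive the first-order system first, because the pointwise structure of the optimal control obtained there is needed in the existence argument. Local minimality of $\bar u$ together with convexity of $\uad$ gives $J'(\bar u)(u-\bar u)\ge 0$ for every $u\in\uad\cap L^\infty(Q)$ sufficiently close to $\bar u$ in $L^r(0,T;L^2(\Omega))$; the scaling $u_t=\bar u+t(v-\bar u)$ with $t\downarrow 0$ removes the locality, and a truncation/density step in $\uad$ extends the inequality to all $u\in\uad$. Corollary \ref{C2.1} rewrites $J'(\bar u)(u-\bar u)=\int_Q(\bar\varphi+\kappa\bar u)(u-\bar u)\dx\dt$ with $\bar y$, $\bar\varphi$ defined by \eqref{E3.2}, \eqref{E3.3}; defining $\bar\mu:=-(\bar\varphi+\kappa\bar u)$ gives \eqref{E3.5} by construction and turns the variational inequality into \eqref{E3.4}.

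The main hurdle in this part is to prove $\bar u\in L^\infty(Q)$, which then yields $\bar\mu\in L^\infty(Q)$ through $\bar\varphi\in C(\bar Q)$. I would exploit that \eqref{E3.4} decouples pointwise in $t$: for almost every $t$, $\bar u(\cdot,t)$ is the unique minimizer of the strictly convex functional $v\mapsto\int_\Omega\bigl(\bar\varphi(x,t)v+\tfrac{\kappa}{2}v^2\bigr)\dx$ on $\{v\in L^2(\Omega):\|v\|_{L^1(\Omega)}\le\gamma\}$. Lagrange duality for the $L^1$-ball produces a multiplier $\bar\lambda(t)\ge 0$ with the complementarity $\bar\lambda(t)(\|\bar u(\cdot,t)\|_{L^1(\Omega)}-\gamma)=0$ together with the closed-form soft-thresholding identity
\[
\bar u(x,t)=-\frac{1}{\kappa}\,\sign(\bar\varphi(x,t))\bigl(|\bar\varphi(x,t)|-\bar\lambda(t)\bigr)_{+}.
\]
Since $\bar\varphi\in C(\bar Q)\subset L^\infty(Q)$, this gives $|\bar u|\le\kappa^{-1}\|\bar\varphi\|_{L^\infty(Q)}$ and hence $\bar u\in L^\infty(Q)$.

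For existence, the obstruction is that $\uad$ sits naturally in the non-reflexive space $L^\infty(0,T;L^1(\Omega))$, while the nonlinearity in \eqref{E1.1} forces us to work in $L^\infty(Q)$. I would use the truncation strategy announced in the introduction: replace $a$ by a modification $a_M$ that coincides with $a$ on $\{|y|\le M\}$ and has globally bounded $y$-derivatives, and minimize $J$ over $\uad\cap L^2(Q)$ for the truncated state equation, obtaining a problem \PbM\ to which the direct method in $L^2(Q)$ applies (weak $L^2$-closedness of $\uad\cap L^2(Q)$, compactness of $W(0,T)\hookrightarrow L^2(Q)$, and $L^2$-coercivity of $J$). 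This yields a minimizer $\bar u_M$. The decisive step is a bootstrap proving $\|\bar y_M\|_{L^\infty(Q)}\le C$ uniformly in $M$: $J(\bar u_M)\le J(0)$ gives a uniform $L^2(Q)$-bound on $\bar u_M$, hence on $\bar y_M$ in $W(0,T)$; parabolic $L^\infty$-regularity for the adjoint equation of \PbM\ (whose right-hand side lies in $L^{\rd}(0,T;L^{\pd}(\Omega))$ by \eqref{E2.19} and Sobolev embedding) yields a uniform $L^\infty$-bound on $\bar\varphi_M$; the soft-thresholding identity from the first-order analysis, applied to \PbM, transfers this to a uniform $L^\infty(Q)$-bound on $\bar u_M$; and \eqref{E2.6} closes the bootstrap. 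Choosing $M$ larger than this bound makes the truncation inactive, so $\bar u_M$ solves \Pb.
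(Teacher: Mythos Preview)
Your proposal is correct and follows the paper's approach closely: the variational inequality via convexity of $\uad$, the truncation $u_k=\proj_{[-k,k]}(u)$ to extend from $\uad\cap L^\infty(Q)$ to all of $\uad$, the definition $\bar\mu=-(\bar\varphi+\kappa\bar u)$, and for existence the truncation $a_M$, direct method in $L^2(Q)$, and the bootstrap $J_M(u_M)\le J_M(0)\Rightarrow$ uniform $L^2$ bound on $u_M\Rightarrow$ uniform $L^\infty(0,T;L^2(\Omega))$ bound on $y_M\Rightarrow$ uniform $L^\infty$ bound on $\varphi_M\Rightarrow$ uniform $L^\infty$ bound on $u_M$.

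One remark: what you call ``the main hurdle''---proving $\bar u\in L^\infty(Q)$ via the soft-thresholding formula---is unnecessary here. In the paper's setup a local minimizer of \Pb is \emph{by definition} an element of $\uad\cap L^\infty(Q)$, so $\bar\mu=-(\bar\varphi+\kappa\bar u)\in L^\infty(Q)$ follows immediately once $\bar\varphi\in C(\bar Q)$. The soft-thresholding representation is established separately in the paper as Corollary~\ref{C3.2}. For the analogous $L^\infty$ bound on $u_M$ in the existence argument, the paper uses the lighter device of showing (from \eqref{E4.12}) that $u_M$ and $\mu_M$ have the same sign a.e., whence $\kappa|u_M|\le|\kappa u_M+\mu_M|=|\varphi_M|$; your soft-thresholding route reaches the same bound but is more than is needed.
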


\begin{proof}
	The proof of existence of a solution for \Pb is postponed to the next section, see Theorem \ref{T4.5}. Given a local minimizer $\bar u$, we take $\bar y$ and $\bar\varphi$ as solutions of \eqref{E3.2} and \eqref{E3.3}, respectively. Using the convexity of $\uad$ and \eqref{E2.20} we get
	\[
	0 \le J'(\bar u)(u - \bar u) = \int_Q(\bar\varphi + \kappa\bar u)(u - \bar u) \dx\dt \quad \forall u \in \uad \cap \tcr{L^\infty(Q)}.
	\]
	Now, given $u \in \uad$ arbitrary, we set $u_k(x,t) = \proj_{[-k,+k]}(u(x,t))$ for $k \ge 1$, thus $\{u_k\}_{k= 1}^\infty \subset \tcr{L^\infty(Q)} \cap \uad$ and $u_k \to u$ in $L^1(Q)$. Then, we can pass to the limit in the inequality $J'(\bar u)(u_k - \bar u) \ge 0$ and, hence, we obtain
	\[
	\int_Q(\bar\varphi + \kappa\bar u)(u - \bar u) \dx\dt \ge 0\ \ \forall u \in \uad.
	\]
	This inequality is equivalent to the fact $-(\bar\varphi + \kappa\bar u) \in \partial I_{\uad}(\bar u) \subset L^\infty(Q)$. Here $\partial I_{\uad}$ denotes the subdifferential of the indicator function $I_{\uad}:L^1(Q) \longrightarrow [0,+\infty]$, which takes the value $I_\uad(u) = 0$ if $u \in \uad$ and $+\infty$ otherwise. Therefore, there exists $\bar\mu \in \partial I_{\uad}$ such that \eqref{E3.4} and \eqref{E3.5} holds.
	\qed\end{proof}

Let us denote by $\proj_{B_\gamma}:L^2(\Omega) \longrightarrow B_\gamma \cap L^2(\Omega)$ the  $L^2(\Omega)$ projection, where $B_\gamma = \{v \in L^1(\Omega) : \|v\|_{L^1(\Omega)} \le \gamma\}$. Then, we have the following consequence of the previous theorem.

\begin{corollary}
	Let $\bar u$, $\bar\varphi$, and $\bar\mu$ satisfy \eqref{E3.2}--\eqref{E3.5}. Then, the following properties hold
	\begin{align}
		&\int_\Omega\bar\mu(t)(v - \bar u(t))\dx \le 0\ \ \forall v \in B_\gamma \text{ and for a.a. } t \in (0,T),\label{E3.6}\\
		&\bar u(t) = \proj_{B_\gamma}\big(-\frac{1}{\kappa}\bar\varphi(t)\big) \text{ for a.a. } t \in (0,T),\label{E3.7}\\
		&\left\{\begin{array}{l}\bar u(x,t)\bar\mu(x,t) = |\bar u(x,t)||\bar\mu(x,t)| \text{ for a.a. } (x,t) \in Q,\\[1 ex] \text{if } \|\bar u(t)\|_{L^1(\Omega)} < \gamma \text{ then } \bar\mu(t) \equiv 0 \text{ in } \Omega \text{ a.e. in } (0,T),\\[1 ex] \text{if } \|\bar u(t)\|_{L^1(\Omega)} = \gamma \text{ and } \bar\mu(t) \not\equiv 0 \text{ in } \Omega,\\[1 ex] \hspace{2cm} \text{ then } \supp(\bar u(t)) \subset \{x \in \Omega :  |\bar\mu(x,t)| = \|\bar\mu(t)\|_{L^\infty(\Omega)}\}.\end{array}\right.\label{E3.8}
	\end{align}
	\label{C3.1}
\end{corollary}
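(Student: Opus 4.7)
The plan is to prove the three items of the corollary in the order they are stated, since each later one builds on the preceding.

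For \eqref{E3.6}, the idea is to localize the integral inequality \eqref{E3.4} in time. Given any measurable set $E \subset (0,T)$ and any fixed $v \in B_\gamma$, the function $u(x,t) = v(x)\chi_E(t) + \bar u(x,t)\chi_{(0,T)\setminus E}(t)$ still lies in $\uad$ because $\|u(t)\|_{L^1(\Omega)} \le \gamma$ pointwise in $t$, so \eqref{E3.4} yields
\[
\int_E \Big(\int_\Omega \bar\mu(x,t)(v(x) - \bar u(x,t))\dx\Big)\dt \le 0.
\]
Since $E$ is arbitrary, the inner integral is nonpositive for a.a. $t \in (0,T)$, with the null set possibly depending on $v$. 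The main technical point here is removing that dependence: I would pick a countable set $\{v_k\} \subset B_\gamma$ which is dense in $B_\gamma \cap L^2(\Omega)$ (such a set exists since $L^2(\Omega)$ is separable), take the countable union of the exceptional sets, and then pass to arbitrary $v \in B_\gamma$ by approximation in $L^2(\Omega)$, using that $\bar\mu(t) \in L^\infty(\Omega)$ for a.a.~$t$.

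For \eqref{E3.7}, I would substitute $\bar\mu = -(\bar\varphi + \kappa\bar u)$ from \eqref{E3.5} into \eqref{E3.6} and rearrange to
\[
\int_\Omega \Big(\bar u(t) - \big(-\tfrac{1}{\kappa}\bar\varphi(t)\big)\Big)(v - \bar u(t))\dx \ge 0 \quad \forall v \in B_\gamma.
\]
This is precisely the variational characterization of the $L^2(\Omega)$-projection onto the closed convex set $B_\gamma \cap L^2(\Omega)$, giving \eqref{E3.7}.

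For \eqref{E3.8}, the key observation is that \eqref{E3.6} says $\bar u(t)$ maximizes $v \mapsto \int_\Omega \bar\mu(t)v\dx$ over $B_\gamma$, and by $L^1$--$L^\infty$ duality this maximum equals $\gamma\|\bar\mu(t)\|_{L^\infty(\Omega)}$. Combining with the trivial chain
\[
\int_\Omega \bar\mu(t)\bar u(t)\dx \le \int_\Omega |\bar\mu(t)||\bar u(t)|\dx \le \|\bar\mu(t)\|_{L^\infty(\Omega)}\|\bar u(t)\|_{L^1(\Omega)} \le \gamma\|\bar\mu(t)\|_{L^\infty(\Omega)},
\]
equality holds throughout. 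Equality in the first inequality gives the sign condition $\bar u\bar\mu = |\bar u||\bar\mu|$; equality in the third together with $\bar\mu(t) \not\equiv 0$ forces $\|\bar u(t)\|_{L^1(\Omega)} = \gamma$; and equality in the second forces $|\bar u(t)|$ to be supported in $\{|\bar\mu(x,t)| = \|\bar\mu(t)\|_{L^\infty(\Omega)}\}$. Finally, if $\|\bar u(t)\|_{L^1(\Omega)} < \gamma$, the admissible variations $v = (1\pm\varepsilon)\bar u(t)$ in \eqref{E3.6} yield $\int_\Omega \bar\mu(t)\bar u(t)\dx = 0$, hence $\gamma\|\bar\mu(t)\|_{L^\infty(\Omega)} = 0$, i.e.\ $\bar\mu(t)\equiv 0$.

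I expect the main obstacle to be the separability/measurable-selection step in deriving \eqref{E3.6} from \eqref{E3.4}; once the pointwise-in-$t$ inequality is available, \eqref{E3.7} is immediate from the projection theorem and \eqref{E3.8} follows from a short chain of duality inequalities.
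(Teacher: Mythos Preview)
Your proof is correct and for \eqref{E3.6} and \eqref{E3.7} essentially coincides with the paper's argument: both localize \eqref{E3.4} in time by splicing a fixed $v\in B_\gamma$ onto $\bar u$ over a measurable time set, and both read off \eqref{E3.7} from the variational characterization of the $L^2$-projection. You are in fact more careful than the paper about making the exceptional time set independent of $v$ via separability; the paper leaves this implicit. One small wording issue: when you say ``pass to arbitrary $v\in B_\gamma$ by approximation in $L^2(\Omega)$'', for $v\in B_\gamma\setminus L^2(\Omega)$ you should approximate in $L^1(\Omega)$ (e.g.\ by truncation), which suffices since $\bar\mu(t)\in L^\infty(\Omega)$.

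For \eqref{E3.8} you take a genuinely different and cleaner route. The paper proves the three assertions separately by hand-built test functions: $u=\sign(\bar\mu)|\bar u|$ for the sign condition; small perturbations $v+\bar u(t)$ with $v\in B_\varepsilon$ on the set $\{\|\bar u(t)\|_{L^1}\le\gamma-\varepsilon\}$ for the vanishing of $\bar\mu$; and a rather delicate mass-transfer construction between a set where $|\bar\mu|$ is below $\|\bar\mu(t)\|_{L^\infty}-\varepsilon$ and a set where it is above, for the support statement. Your approach instead observes that \eqref{E3.6} forces $\int_\Omega\bar\mu(t)\bar u(t)\dx=\gamma\|\bar\mu(t)\|_{L^\infty(\Omega)}$ by $L^1$--$L^\infty$ duality, and then reads off all three conclusions simultaneously from the single chain of inequalities becoming equalities. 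This is shorter and more conceptual; the paper's constructions are more explicit but yield exactly the same information.
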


\begin{proof}
	Let us show that \eqref{E3.4} and \eqref{E3.6} are equivalent. Using Fubini's theorem, it is  obvious that \eqref{E3.6} implies \eqref{E3.4}. Let us prove the contrary implication. Let $v \in B_\gamma$ be arbitrary and set
	\[
	I_v = \big\{t \in (0,T) : \int_\Omega\bar\mu(x,t)(v(x) - \bar u(x,t))\dx > 0\Big\}
	\]
	and
	\[
	u(x,t) = \left\{\begin{array}{cl}v(x) &\text{if } t \in I_v,\\\bar u(x,t)&\text{otherwise.}\end{array}\right.
	\]
	Then, $u \in \uad$ and \eqref{E3.4} yields
	\[
	0 \ge \int_Q\bar\mu(x,t)(u - \bar u)\dx\dt = \int_{I_v}\int_\Omega\bar\mu(x,t)(v(x) - \bar u(x,t))\dx.
	\]
	This is only possible if $|I_v| = 0$. In order to prove \eqref{E3.7} we use \eqref{E3.5} and \eqref{E3.6} to get
	\[
	\int_\Omega\big(-\frac{1}{\kappa}\bar\varphi(t) - \bar u(t))(v - \bar u(t)) \le 0 \ \ \forall v \in B_\gamma \cap L^2(\Omega) \text{ and for a.a. } t \in (0,T).
	\]
	Since $B_\gamma \cap L^2(\Omega)$ is a convex and closed subset of $L^2(\Omega)$, the above inequality is the well known characterization of \eqref{E3.7}.
	
	Let us prove the first statement of \eqref{E3.8}. Take $u(x,t) = \sign(\bar\mu(x,t))|\bar u(x,t)|$. Then, $u \in \uad$ and with \eqref{E3.4} we obtain
	\[
	\int_Q|\bar\mu(x,t)||\bar u(x,t)|\dx\dt = \int_Q\bar\mu(x,t)u(x,t)\dx\dt \le \int_Q\bar\mu(x,t)\bar u(x,t)\dx\dt,
	\]
	which proves the desired identity. We prove the second statement of \eqref{E3.8}. For every $\varepsilon > 0$ we define
	\[
	I_\varepsilon = \{t \in (0,T) : \|\bar u(t)\|_{L^1(\Omega)} \le \gamma - \varepsilon\}.
	\]
	Denote $B_\varepsilon$ the closed ball of $L^1(\Omega)$ centered at 0 and radius $\varepsilon$. Take $v \in B_\varepsilon$ arbitrary. Then, we have that $v+\bar u(t) \in B_\gamma$ for $t \in I_\varepsilon$, and \eqref{E3.6} yields
	\[
	\int_\Omega\bar\mu(x,t)v(x)\dx \le 0\ \ \forall v \in B_\varepsilon \text{ and } t \in I_\varepsilon,
	\]
	which implies that $\bar\mu(t) \equiv 0$ in $\Omega$ for $t \in I_\varepsilon$. Since $\varepsilon > 0$ is arbitrary, we infer the second statement of \eqref{E3.8}. Let us prove the third statement. Under the assumption $\|\bar u(t)\|_{L^1(\Omega)} = \gamma$ and $\bar\mu(t) \not\equiv 0$ in $\Omega$. For every $\varepsilon > 0$ and $t \in (0,T)$ we consider the sets
	\begin{align*}
		&\Omega^\varepsilon(t) = \{x \in \Omega : |\bar u(x,t)| > \varepsilon \text{ and } |\bar\mu(x,t)| < \|\bar\mu(t)\|_{L^\infty(\Omega)} - \varepsilon\},\\
		&\tilde\Omega^\varepsilon(t) = \{x \in \Omega : |\bar\mu(x,t)| > \|\bar\mu(t)\|_{L^\infty(\Omega)} - \varepsilon\}.
	\end{align*}
	We are going to prove that $|\Omega^\varepsilon(t)| = 0$ for almost all $t \in (0,T)$. Assume that $|\Omega^\varepsilon(t)| > 0$ for some $\varepsilon > 0$ and $t \in (0,T)$. Since $|\tilde\Omega^\varepsilon(t)| > 0$ by definition of the essential supremum, we can find two sets $E \subset \Omega^\varepsilon(t)$ and $F \subset \tilde\Omega^\varepsilon(t)$ such that $|E| = |F| > 0$. We define the control
	\[
	v(x) = \left\{\begin{array}{cl}\bar u(x,t) - \varepsilon\sign(\bar u(x,t))&\text{if } x \in E,\\\bar u(x,t) + \varepsilon\sign(\bar u(x,t))&\text{if } x \in F,\\\bar u(x,t)&\text{otherwise.}\end{array}\right.
	\]
	Since $\|\bar u(t)\|_{L^1(\Omega)} = \gamma$, we get
	\[
	\|v\|_{L^1(\Omega)} = \int_E|\bar u(x)|\dx - \varepsilon|E| + \int_F|\bar u(x)|\dx + \varepsilon|F| + \int_{\Omega\setminus(E\cup F)}|\bar u(x)|\dx = \gamma.
	\]
	Moreover, we get with the first statement of \eqref{E3.8}
	\[
	\int_\Omega\bar\mu(x,t)(v(x)- \bar u(x,t))\dx = -\varepsilon\int_E|\bar\mu(x,t)|\dx + \varepsilon\int_F|\bar\mu(x,t)|\dx > 0,
	\]
	which contradicts \eqref{E3.6} unless it is satisfied for a set of points $t$ of zero Lebesgue measure. Taking
	\[
	\Omega(t) = \{x \in \Omega : |\bar u(x,t)| > 0 \text{ and } |\bar\mu(x,t)| < \|\bar\mu(t)\|_{L^\infty(\Omega)}\},
	\]
	since $\varepsilon > 0$ was arbitrary, we deduce that $|\Omega(t)| = 0$ for almost all $t \in (0,T)$. This implies that $\supp(\bar u(t)) \subset \{x \in \Omega : |\bar\mu(x,t)| = \|\bar\mu(t)\|_{L^\infty(\Omega)}\}$.
	\qed\end{proof}

\begin{remark}
	Let us observe that the first statement of \eqref{E3.8} and \eqref{E3.5} imply
	\[
	|\bar\varphi(x,t)| = \kappa|\bar u(x,t)| + |\bar\mu(x,t)|.
	\]
	This yields
	\[
	\|\bar\varphi(t)\|_{L^1(\Omega)} = \kappa\|\bar u(t)\|_{L^1(\Omega)} + \|\bar\mu(t)\|_{L^1(\Omega)}.
	\]
	From this identity and the second statement of \eqref{E3.8} we infer that $\bar\mu(t) \not\equiv 0$ in $\Omega$ if and only if $\|\bar\varphi(t)\|_{L^1(\Omega)} > \kappa\gamma$.
	\label{R3.1}
\end{remark}

\begin{remark}
	From \eqref{E3.8} we deduce that $\bar\mu(x,t) \in \|\bar\mu(t)\|_{L^\infty(\Omega)}\,\partial|\cdot|(\bar u(x,t))$ for almost every point $(x,t) \in Q$.
	\label{R3.2}
\end{remark}

\begin{corollary}
	Let $\bar u \in \uad \cap L^\infty(\Omega)$ satisfy \eqref{E3.5} and \eqref{E3.8}. Then, the following identities are satisfied
	\begin{align}
		&\bar u(x,t) = -\frac{1}{\kappa}\sign(\bar\varphi(x,t))\big(|\bar\varphi(x,t)| - \|\bar\mu(t)\|_{L^\infty(\Omega)}\big)^+\notag\\
		&\hspace{0.95cm}=-\frac{1}{\kappa}\left\{\Big[\bar\varphi(x,t) + \|\bar\mu(t)\|_{L^\infty(\Omega)}\Big]^- + \Big[\bar\varphi(x,t) - \|\bar\mu(t)\|_{L^\infty(\Omega)}\Big]^+\right\}.
		\label{E3.9}
	\end{align}
	Moreover, the regularity $\bar u \in H^1(Q)$ and $\bar\mu \in H^1(Q)$ hold.
	\label{C3.2}
\end{corollary}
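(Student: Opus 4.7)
The plan is to first derive the pointwise formula \eqref{E3.9} from the optimality conditions by a case distinction, and then to prove $\bar u\in H^1(Q)$ in two steps, using the projection representation \eqref{E3.7} to control $\partial_t\bar u$ and the explicit formula \eqref{E3.9} together with Stampacchia's chain rule to control $\nabla_x\bar u$; the regularity of $\bar\mu$ is then immediate from \eqref{E3.5}.

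Setting $\lambda(t):=\|\bar\mu(t)\|_{L^\infty(\Omega)}$, Remark \ref{R3.2} combined with \eqref{E3.5} yields the pointwise inclusion
\[
-\frac{1}{\kappa}\bar\varphi(x,t)-\bar u(x,t)\in\frac{\lambda(t)}{\kappa}\,\partial|\cdot|(\bar u(x,t)),
\]
which is the Euler--Lagrange condition for scalar soft thresholding. A case analysis delivers the formula: if $|\bar\varphi(x,t)|\le\lambda(t)$, then assuming $\bar u(x,t)\ne 0$ forces $\bar\mu=\lambda\sign(\bar u)$ and $|\bar\varphi|=\kappa|\bar u|+\lambda>\lambda$, a contradiction, so $\bar u=0$; if $|\bar\varphi(x,t)|>\lambda(t)$, then $\bar u=0$ would give $|\bar\mu|=|\bar\varphi|>\lambda$, contradicting $|\bar\mu|\le\lambda$, so $\bar u\ne 0$, $\bar\mu=\lambda\sign(\bar u)$, $\sign(\bar\varphi)=-\sign(\bar u)$, and $|\bar u|=\kappa^{-1}(|\bar\varphi|-\lambda)$. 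Both identities in \eqref{E3.9} follow, the second via the elementary identity $\sign(a)(|a|-c)^+=(a-c)^++(a+c)^-$ for $c\ge 0$.

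For the regularity, I recall $\bar\varphi\in H^1(Q)$ from Corollary \ref{C2.1}. For the time derivative, I observe that $B_\gamma\cap L^2(\Omega)$ is a closed convex subset of the Hilbert space $L^2(\Omega)$, so $\proj_{B_\gamma}$ is $1$-Lipschitz; applying this pointwise in $t$ to \eqref{E3.7} gives
\[
\int_0^{T-h}\|\bar u(t+h)-\bar u(t)\|_{L^2(\Omega)}^2\,\dt\le\frac{1}{\kappa^2}\int_0^{T-h}\|\bar\varphi(t+h)-\bar\varphi(t)\|_{L^2(\Omega)}^2\,\dt.
\]
Dividing by $h^2$ and sending $h\to 0$, the right-hand side remains bounded by $\kappa^{-2}\|\partial_t\bar\varphi\|_{L^2(Q)}^2$, and a weak-compactness argument in $L^2(Q)$ identifies the resulting limit as $\partial_t\bar u\in L^2(Q)$. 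For the spatial derivative, I fix $t$ and view \eqref{E3.9} as $\bar u(\cdot,t)=\Phi_{\lambda(t)}(\bar\varphi(\cdot,t))$, where $\Phi_c(s):=-\kappa^{-1}\sign(s)(|s|-c)^+$ is Lipschitz in $s$ with constant $\kappa^{-1}$ and $\Phi_c(0)=0$. Since $\bar\varphi(\cdot,t)\in H_0^1(\Omega)$ for a.e.\ $t$, Stampacchia's chain rule gives $\bar u(\cdot,t)\in H_0^1(\Omega)$ with $|\nabla_x\bar u|\le\kappa^{-1}|\nabla_x\bar\varphi|$ a.e., and integrating in $t$ yields $\nabla_x\bar u\in L^2(Q)$. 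Therefore $\bar u\in H^1(Q)$, and then $\bar\mu=-\bar\varphi-\kappa\bar u\in H^1(Q)$ by \eqref{E3.5}. The main subtlety I anticipate is the time-regularity step: the nonexpansivity of $\proj_{B_\gamma}$ only supplies a modulus-of-continuity bound on difference quotients in $L^2(Q)$, and the passage to a genuine weak time derivative requires a careful Hilbert-space compactness argument.
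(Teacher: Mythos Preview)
Your proof is correct and follows essentially the same route as the paper: a pointwise case analysis based on \eqref{E3.5} and \eqref{E3.8} for the formula, the Lipschitz property of $\proj_{B_\gamma}$ applied to \eqref{E3.7} for time regularity, the explicit formula \eqref{E3.9} for spatial regularity, and \eqref{E3.5} for $\bar\mu$. The only cosmetic differences are that the paper organizes the case split around the sign of $\bar u$ (after first disposing of $\|\bar\mu(t)\|_{L^\infty(\Omega)}=0$), and that for the time derivative the paper invokes absolute continuity of $t\mapsto\bar\varphi(t)$ in $L^2(\Omega)$ directly rather than the difference-quotient characterization you use; both arguments are equivalent and your version is equally valid.
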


\begin{proof}
	If $\|\bar\mu(t)\|_{L^\infty(\Omega)} = 0$, then $\bar u(x,t) = -\frac{1}{\kappa}\bar\varphi(x,t)$ follows from \eqref{E3.5}, which coincides with the identity \eqref{E3.9}. Now, we assume that $\|\bar\mu(t)\|_{L^\infty(\Omega)} > 0$. Using \eqref{E3.8} we obtain that $\|\bar u(t)\|_{L^1(\Omega)} = \gamma$. Then, the third statement of \eqref{E3.8} implies that $|\bar\mu(x,t)| = \|\bar\mu(t)\|_{L^\infty(\Omega)}$ if $|\bar u(x,t)| > 0$. We distinguish three cases.
	
	i) If $\bar u(x,t) > 0$, \eqref{E3.5} and the first statement of \eqref{E3.8} leads to $\bar u(x,t) = -\frac{1}{\kappa}(\bar\varphi(x,t) + \|\bar\mu(t)\|_{L^\infty(\Omega)})$, which coincides with the expression \eqref{E3.9}.
	
	ii) If $\bar u(x,t) = 0$, using again \eqref{E3.5} we get $|\bar\varphi(x,t)| = |\bar\mu(x,t)| \le \|\bar\mu(t)\|_{L^\infty(\Omega)}$. Then, the identity \eqref{E3.9} holds.
	
	iii) If $\bar u(x,t) < 0$, from the first statement of \eqref{E3.8} and \eqref{E3.5} we infer that $\bar u(x,t) = -\frac{1}{\kappa}(\bar\varphi(x,t) - \|\bar\mu(t)\|_{L^\infty(\Omega)})$. Then, \eqref{E3.9} holds too.
	
	The spatial regularity $\bar u \in L^2(0,T;H_0^1(\Omega))$ is an immediate consequence of \eqref{E3.9} and the fact that $\bar\varphi \in H^1(Q)$. For the temporal regularity of $\bar u$, we first observe
	\begin{align*}
		&\|\bar u(t) - \bar u(t')\|_{L^2(\Omega)}\\
		&= \|\proj_{B_\gamma}(-\frac{1}{\kappa}\bar\varphi(t)) - \proj_{B_\gamma}(-\frac{1}{\kappa}\bar\varphi(t'))\|_{L^2(\Omega)} \le \frac{1}{\kappa}\|\bar\varphi(t) - \bar\varphi(t')\|_{L^2(\Omega)}.
	\end{align*}
	Since $\bar\varphi:[0,T] \longrightarrow L^2(\Omega)$ is absolutely continuous, using the above inequality we infer that $\bar u:[0,T] \longrightarrow L^2(\Omega)$ is also absolutely continuous. Moreover, the same inequality yields $\|\bar u'(t)\|_{L^2(\Omega)} \le \frac{1}{\kappa}\|\bar\varphi'(t)\|_{L^2(\Omega)}$ and $\bar u \in W^{1,2}(0,T;L^2(\Omega))$. All together, this implies that $\bar u \in H^1(Q)$. The regularity of $\bar\mu$ follows from \eqref{E3.5}.
	\qed\end{proof}

\begin{corollary}
	Let $\bar u$ be as in Corollary \ref{C3.2}. Then, we have the following property
	\begin{equation}
		\bar u(x,t) = 0 \text{ if and only if } |\bar\varphi(x,t)| \le \|\bar\mu(t)\|_{L^\infty(\Omega)}.
		\label{E3.10}
	\end{equation}
	\label{C3.3}
\end{corollary}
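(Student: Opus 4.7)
The statement is essentially a direct consequence of the pointwise representation \eqref{E3.9} established in Corollary~\ref{C3.2}, so my plan is to derive both implications from that formula by a simple case analysis on the sign of $\bar\varphi(x,t)$.

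For the ``if'' direction, suppose $|\bar\varphi(x,t)| \le \|\bar\mu(t)\|_{L^\infty(\Omega)}$. Then $\bigl(|\bar\varphi(x,t)| - \|\bar\mu(t)\|_{L^\infty(\Omega)}\bigr)^+ = 0$, and substituting into the first equality of \eqref{E3.9} immediately gives $\bar u(x,t) = 0$.

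For the ``only if'' direction, suppose $\bar u(x,t) = 0$. If $\bar\varphi(x,t) \neq 0$, then $\sign(\bar\varphi(x,t)) = \pm 1$, so the vanishing of the right hand side of \eqref{E3.9} forces $\bigl(|\bar\varphi(x,t)| - \|\bar\mu(t)\|_{L^\infty(\Omega)}\bigr)^+ = 0$, which is exactly $|\bar\varphi(x,t)| \le \|\bar\mu(t)\|_{L^\infty(\Omega)}$. If instead $\bar\varphi(x,t) = 0$, then the inequality $|\bar\varphi(x,t)| = 0 \le \|\bar\mu(t)\|_{L^\infty(\Omega)}$ is trivial.

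There is no real obstacle here, since all the work has already been done in producing the explicit projection formula \eqref{E3.9}. The only mild care needed is to handle the degenerate case $\bar\varphi(x,t) = 0$ separately so that the vanishing of $\sign(\bar\varphi(x,t))$ does not leave the positive part $\bigl(|\bar\varphi| - \|\bar\mu(t)\|_{L^\infty(\Omega)}\bigr)^+$ undetermined; as noted above, that case is trivial because the claimed inequality holds automatically. As an alternative (and independent) route, one could argue directly from \eqref{E3.5} and the first line of \eqref{E3.8}: if $\bar u(x,t) = 0$, then $\bar\mu(x,t) = -\bar\varphi(x,t)$, hence $|\bar\varphi(x,t)| = |\bar\mu(x,t)| \le \|\bar\mu(t)\|_{L^\infty(\Omega)}$, and the converse is obtained by reading off the three cases i)--iii) in the proof of Corollary~\ref{C3.2}.
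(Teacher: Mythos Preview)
Your proof is correct and follows exactly the approach the paper indicates: the paper simply states that the corollary is a straightforward consequence of \eqref{E3.9}, and your case analysis on $\sign(\bar\varphi(x,t))$ (including the degenerate case $\bar\varphi(x,t)=0$) makes this explicit. The alternative route via \eqref{E3.5} and \eqref{E3.8} that you mention is also valid and essentially retraces the case distinction in the proof of Corollary~\ref{C3.2}.
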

This corollary is a straightforward consequence of \eqref{E3.9}.
\begin{theorem}
	There exists a constant $K_\infty > 0$ independent of $\gamma$ such that $\|\bar u\|_{L^\infty(Q)} \le K_\infty$ for every global minimizer $\bar u$ of \Pb. In addition, if we set $\gamma_0 = K_\infty|\Omega|$, then for every $\gamma > \gamma_0$ and every solution $\bar u$ of \Pb we have $\|\bar u(t)\|_{L^1(\Omega)} < \gamma$ for almost every $t$ and $\bar u = -\frac{1}{\kappa}\bar\varphi$.
	\label{T3.2}
\end{theorem}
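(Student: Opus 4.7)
The plan is to first establish a uniform $L^\infty(Q)$ bound on global minimizers that does not depend on $\gamma$, and then to deduce the second assertion from the embedding $L^\infty(\Omega)\hookrightarrow L^1(\Omega)$ together with the complementary slackness built into \eqref{E3.5} and \eqref{E3.8}. The key qualitative observation is that formula \eqref{E3.9} implies the pointwise majorization $|\bar u(x,t)|\le \kappa^{-1}|\bar\varphi(x,t)|$ a.e.\ in $Q$, since $\bigl(|\bar\varphi|-\|\bar\mu(t)\|_{L^\infty(\Omega)}\bigr)^+\le |\bar\varphi|$. Hence any regularity gained for $\bar\varphi$ transfers directly to $\bar u$, and the whole argument reduces to a short bootstrap alternating between the state and adjoint equations.

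Since $u\equiv 0$ is admissible for every $\gamma>0$, global optimality yields $\kappa\|\bar u\|_{L^2(Q)}^2 + \|\bar y - y_d\|_{L^2(Q)}^2 \le 2J(0)$, with $J(0)$ depending only on the data. Estimate \eqref{E2.7} then bounds $\bar y$ uniformly in $W(0,T)\cap L^\infty(0,T;L^2(\Omega))$, and the standard parabolic Gagliardo--Nirenberg embedding places $\bar y$ uniformly in $L^{q_0}(Q)$ with $q_0 = 2 + 4/n$. Turning to \eqref{E3.3}, the substitution $\tilde\varphi(t) = e^{|C_a|t}\bar\varphi(t)$ converts the zero-order coefficient into a non-negative one (while preserving the null terminal condition), so that the usual $L^2$ energy identity yields $\bar\varphi$ uniformly bounded in $W(0,T)$, whence uniformly in $L^{q_0}(Q)$. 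The pointwise inequality above then gives $\bar u$ uniformly bounded in $L^{q_0}(Q)$.

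A short check shows $q_0 > 1 + n/2$ for $n\in\{2,3\}$, so the pair $(r,p)=(q_0,q_0)$ satisfies the hypothesis of Theorem \ref{T2.1} and \eqref{E2.6} provides a uniform bound on $\|\bar y\|_{L^\infty(Q)}$. Hypothesis \eqref{E2.4} therefore makes $\partial_y a(\cdot,\cdot,\bar y)$ uniformly bounded in $L^\infty(Q)$. Returning one last time to \eqref{E3.3} with this uniformly bounded coefficient and right-hand side $\bar y - y_d\in L^{\rd}(0,T;L^{\pd}(\Omega))$, Theorems III-6.1 and III-10.1 of \cite{Lad-Sol-Ura68} deliver $\|\bar\varphi\|_{L^\infty(Q)}\le C$ uniformly in $\gamma$; setting $K_\infty := C/\kappa$ and invoking \eqref{E3.9} once more yields $\|\bar u\|_{L^\infty(Q)}\le K_\infty$. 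For the second assertion, if $\gamma > \gamma_0 := K_\infty|\Omega|$ then
\[
\|\bar u(t)\|_{L^1(\Omega)}\le |\Omega|\,\|\bar u\|_{L^\infty(Q)}\le K_\infty|\Omega| = \gamma_0 < \gamma \quad\text{for a.a. } t\in(0,T),
\]
so the second alternative of \eqref{E3.8} forces $\bar\mu(t)\equiv 0$ in $\Omega$ a.e.\ in $(0,T)$, and \eqref{E3.5} collapses to $\bar u = -\kappa^{-1}\bar\varphi$.

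The main obstacle is the calibration of the bootstrap: estimate \eqref{E2.6} can be triggered only when the exponent of integrability of $\bar u$ exceeds the critical threshold $1 + n/2$, and this must be reached in a single iteration starting from the $L^2$ bound $J(\bar u)\le J(0)$. The inequality $2+4/n > 1+n/2$, valid precisely for $n\in\{2,3\}$, is exactly what makes a single iteration sufficient and explains the restriction on the spatial dimension. The possibly negative sign of $C_a$ is neutralized once and for all by the exponential substitution in the adjoint equation, and every constant appearing in the argument depends only on the data and on $|C_a|$ and $T$, never on $\gamma$.
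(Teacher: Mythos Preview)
Your argument is correct, and the second assertion is handled exactly as in the paper. For the uniform $L^\infty$ bound, however, your route differs from the paper's (which defers to the proof of Theorem~\ref{T4.4}). In Theorem~\ref{T4.4} the paper passes in a \emph{single step} from the uniform bound $\bar y\in L^\infty(0,T;L^2(\Omega))$---obtained, as you do, from $J(\bar u)\le J(0)$ and \eqref{E2.7}---directly to $\bar\varphi\in L^\infty(Q)$: after the exponential substitution, the zero-order coefficient becomes non-negative and \cite[Theorem~III-7.1]{Lad-Sol-Ura68} applies, yielding $L^\infty$ bounds that require no upper bound on that coefficient. Then $\kappa|\bar u|\le|\bar\varphi|$ finishes the job immediately, without any need to revisit the state equation. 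Your approach instead performs a bootstrap: an energy estimate first places $\bar\varphi$ (and hence $\bar u$) uniformly in $L^{q_0}(Q)$ with $q_0=2+4/n$, then Theorem~\ref{T2.1} upgrades $\bar y$ to $L^\infty(Q)$, and only then, with the coefficient $\partial_ya(\cdot,\cdot,\bar y)$ now uniformly bounded, do you return to the adjoint equation for the final $L^\infty$ bound on $\bar\varphi$. This is longer but more self-contained, relying only on standard energy estimates and the paper's own Theorem~\ref{T2.1} rather than on the sharper parabolic $L^\infty$ estimate for equations with merely non-negative unbounded potentials. One minor point: your phrase ``uniformly bounded in $W(0,T)$'' is slightly loose, since the $\partial_t$ part of that norm would involve the potentially large coefficient; what you actually use---and what the energy identity uniformly delivers---is the $L^\infty(0,T;L^2(\Omega))\cap L^2(0,T;H_0^1(\Omega))$ bound, which is all that is needed for the Gagliardo--Nirenberg embedding into $L^{q_0}(Q)$.
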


To prove this theorem, we can argue as in the proof of Theorem \ref{T4.4} below to deduce the existence of $K_\infty > 0$ independent of $\gamma$ such that $\|\bar u\|_{L^\infty(Q)} \le K_\infty$. The last statement is a straightforward consequence of this estimate and the definition of $\gamma_0$.

\section{Proof of existence of a solution for \Pb}
\label{S4}

The proof of existence of a solution of \Pb can not be performed by the classical method of calculus of variations due to the lack of boundedness of $\uad$ in $L^\infty(\Omega)$ and the non coercivity of $J$ on this space. One can try to prove the existence of a solution $\bar u$ of \Pb in $L^2(Q)$ and then to deduce that $\bar u \in L^\infty(Q)$ from the optimality conditions. However, the differentiability of $J$ in $L^2(Q)$ can fail due to the nonlinearity of the state equation. To overcome this difficulty we are going to truncate the nonlinear term $a(x,t,y)$ as follows. For every $M > 0$ we define the function $f_M:\mathbb{R} \longrightarrow \mathbb{R}$ by
\[
f_M(s) = \left\{\begin{array}{cl} M + 1& \text{if } s > M + 1,\\s+ (M - s)^2 + (M - s)^3& \text{if } M \le s \le M + 1,\\s&\text{if } -M < s < +M,\\s - (M + s)^2 - (M+s)^3&\text{if } -M - 1 \le s \le -M,\\-M - 1&\text{if } s < -M - 1.\end{array}\right.
\]
It can be easily checked that $f_M \in C^1(\mathbb{R})$ and $0 \le f'_M(s) \le 1$ for every $s \in \mathbb{R}$. Now, we set $a_M(x,t,s) = a(x,t,f_M(s))$. It is obvious that $a_M$ is of class $C^1$ with respect to the last variable and \eqref{E2.2}--\eqref{E2.4} imply
\begin{align}
	&\frac{\partial a_M}{\partial y}(x,t,y) = \frac{\partial a}{\partial y}(x,t,f_M(y))f'_M(y)\geq \min(0,C_a)\ \forall y\in\mathbb R,\label{E4.1}\\
	&a_M(\cdot,\cdot,0) = a(\cdot,\cdot,0)\in L^{\rd}(0,T;L^{\pd}(\Omega)), \label{E4.2}\\
	&\left|\frac{\partial a_M}{\partial y}(x,t,y)\right|\leq C_{a,M+1}\ \forall y \in \mathbb{R},\label{E4.3}
\end{align}
for almost all $(x,t) \in Q$.

\begin{theorem}
	For any $M > 0$ and all $u \in L^2(Q)$ the equation
	\begin{equation}
		\left\{\begin{array}{l}\displaystyle\frac{\partial y}{\partial t} + A y + a_M(x,t,y) =  u \ \mbox{ in } Q,\\ y = 0  \mbox{ on } \Sigma,\ \ y(0) = y_0 \mbox{ in } \Omega,\end{array}\right.
		\label{E4.4}
	\end{equation}
	has a unique solution $y^M_u \in W(0,T)$. Moreover, $y^M_u$ satisfies the inequalities
	\begin{align}
		&\|y^M_u\|_{C(0,T;L^2(\Omega))} + \|y^M_u\|_{L^2(0,T;H_0^1(\Omega))}\notag\\
		&\hspace{2.4cm}\le K\big(\|u\|_{L^2(Q)} + \|a(\cdot,\cdot,0)\|_{L^2(Q)} + \|y_0\|_{L^2(\Omega)}\big),\label{E4.5}\\
		&\|y^M_u\|_{W(0,T)}\notag\\
		& \le K'\big(\|u\|_{L^2(Q)} + \|y_0\|_{L^2(\Omega)} + \|a(\cdot,\cdot,0)\|_{L^2(Q)} + C_{a,M+1}(M+1)|Q|^{\frac{1}{2}}\big),
		\label{E4.6}
	\end{align}
	where $K$ is the same constant as in \eqref{E2.7} and $K'$ is independent of $M$ and $u$.
	\label{T4.1}
\end{theorem}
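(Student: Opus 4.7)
The plan is to mirror the strategy of Theorem \ref{T2.1}, exploiting that the truncation leaves $\frac{\partial a_M}{\partial y}$ bounded below by $\min(0,C_a)$ via \eqref{E4.1} and now additionally makes $a_M$ globally Lipschitz in $y$ with constant $C_{a,M+1}$ by \eqref{E4.3}. Existence and uniqueness follow classically: after the exponential change of variables $y^M_u = \mathrm{e}^{|C_a|t}\tilde y$, the equation becomes a semilinear parabolic problem whose nonlinearity is nondecreasing in $\tilde y$ and globally Lipschitz, with right-hand side in $L^2(Q)$ since $a_M(\cdot,\cdot,0) = a(\cdot,\cdot,0) \in L^2(Q)$ by \eqref{E4.2} and $u \in L^2(Q)$. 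A standard result such as \cite[Theorem 5.5]{Troltzsch2010} then produces a unique $\tilde y \in W(0,T)$, hence a unique $y^M_u \in W(0,T)$. Uniqueness can also be read off directly by Gronwall applied to the difference of two solutions, since the pointwise Lipschitz bound $C_{a,M+1}$ on $a_M$ serves as a bounded zero-order coefficient.

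For estimate \eqref{E4.5} I would repeat, almost verbatim with $a$ replaced by $a_M$, the final test-function calculation in the proof of Theorem \ref{T2.1}. Setting $\phi = \mathrm{e}^{-|C_a|t}y^M_u$ and introducing the auxiliary nonlinearity analogous to $f$ in that proof, one checks from \eqref{E4.1} that it vanishes at $0$ and is nondecreasing, so testing the transformed equation by $\phi$ absorbs the nonlinear term through positivity. The source term on the right-hand side is $\mathrm{e}^{-|C_a|t}(u - a(x,t,0))$, identical to that in the proof of \eqref{E2.7} because $a_M(\cdot,\cdot,0) = a(\cdot,\cdot,0)$, so the Cauchy--Schwarz and Gronwall steps go through unchanged and yield \eqref{E4.5} with the same constant $K$ as in \eqref{E2.7}.

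For \eqref{E4.6} I would bound $\|\partial_t y^M_u\|_{L^2(0,T;H^{-1}(\Omega))}$ directly from \eqref{E4.4}. The diffusive term $Ay^M_u$ contributes a multiple of $\|y^M_u\|_{L^2(0,T;H^1_0(\Omega))}$, already controlled by \eqref{E4.5}. For the nonlinear term, using $a_M(x,t,y)=a(x,t,f_M(y))$ together with $|f_M(y)|\le M+1$, $f_M(0)=0$, and \eqref{E4.3}, the mean value theorem gives
\[
|a_M(x,t,y)| \le |a(x,t,0)| + C_{a,M+1}|f_M(y)| \le |a(x,t,0)| + C_{a,M+1}(M+1)
\]
pointwise, so
\[
\|a_M(\cdot,\cdot,y^M_u)\|_{L^2(Q)} \le \|a(\cdot,\cdot,0)\|_{L^2(Q)} + C_{a,M+1}(M+1)|Q|^{\frac{1}{2}}.
\]
Combining this with $\|u\|_{L^2(Q)}$ and the $L^2(0,T;H^1_0(\Omega))$ bound on $y^M_u$ produces \eqref{E4.6}. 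The only point requiring care is keeping the constant $K'$ independent of $M$ and $u$ while leaving the $M$-dependence explicit through the saturation term $C_{a,M+1}(M+1)|Q|^{\frac{1}{2}}$; no genuine obstacle arises, since every step is a transcription of arguments already used in Theorem \ref{T2.1}.
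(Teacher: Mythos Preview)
Your proposal is correct and follows essentially the same route as the paper: the key ingredients---the global Lipschitz bound $|a_M(x,t,y)-a(x,t,0)|\le C_{a,M+1}(M+1)$ from \eqref{E4.3}, the monotonicity from \eqref{E4.1}, the reuse of the $\phi$-argument from Theorem~\ref{T2.1} for \eqref{E4.5}, and the $L^2(Q)$ bound on $a_M(\cdot,\cdot,y^M_u)$ for \eqref{E4.6}---are identical. The only cosmetic difference is that the paper obtains existence by a direct Schauder fixed-point argument (made possible by the uniform bound on $a_M$), whereas you invoke the exponential change of variables and \cite[Theorem 5.5]{Troltzsch2010}; both are standard and neither offers a real advantage over the other here.
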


\begin{proof}
	From \eqref{E4.3} and the mean value theorem we infer that $|a_M(\cdot,\cdot,s) - a_M(\cdot,\cdot,0)| \le C_{a,M+1}(M+1)$ for all $s \in \mathbb{R}$. Consequently, the estimate
	\[
	\|a_M(\cdot,\cdot,y) - a_M(\cdot,\cdot,0)\|_{L^2(Q)} \le C_{a,M+1}(M+1)|Q|^{\frac{1}{2}}
	\]
	holds. Hence, an easy application of fixed point Schauder's theorem yields the existence of a solution $y^M_u$ in $W(0,T)$. The uniqueness follows in the standard way noting that
	\[
	\int_\Omega[a_M(x,t,y_2) - a_M(x,t,y_1)](y_2 - y_1)\dx \ge \min\{0,C_a\}\|y_2 - y_1\|^2_{L^2(\Omega)}.
	\]
	The proof of the estimate \eqref{E4.5} is the same as the one of \eqref{E2.7}. Inequality \eqref{E4.6} follows from \eqref{E4.5} and the fact that
	\[
	\|a_M(\cdot,\cdot,y)\|_{L^2(Q)} \le \|a_M(\cdot,\cdot,0)\|_{L^2(Q)} + C_{a,M+1}(M+1)|Q|^{\frac{1}{2}}. \hspace{3.25cm}\qed
	\]
\end{proof}

Let us define the mapping $G_M:L^2(Q) \longrightarrow W(0,T)$ associating to every $u$ the corresponding solution $y^M_u$ of \eqref{E4.4}.

\begin{theorem}
	The mapping $G_M$ is of class $C^1$. For all $u, v \in L^2(Q)$ the derivative $z_v = G'_M(u)v$ is the solution of the linearized equation
	\begin{equation}
		\left\{\begin{array}{l}\displaystyle\frac{\partial z}{\partial t} + A z + \frac{\partial a_M}{\partial y}(x,t,y^M_u)z =  v \ \mbox{ in } Q,\\ z = 0  \mbox{ on } \Sigma,\ \ z(0) = 0 \mbox{ in } \Omega,\end{array}\right.
		\label{E4.7}
	\end{equation}
	where $y^M_u = G_M(u)$.
	\label{T4.2}
\end{theorem}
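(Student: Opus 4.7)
The strategy is to apply the implicit function theorem in the spirit of Theorem \ref{T2.2}, but in a considerably simpler functional-analytic setting: because the truncated nonlinearity $a_M$ has globally bounded derivative, one can work directly in the Hilbert space $W(0,T)$ instead of using the space $\mathcal{Y}$ with its $L^\infty(Q)$-component. Concretely, I would introduce
\[
\mathcal{F}:W(0,T)\times L^2(Q)\longrightarrow L^2(0,T;H^{-1}(\Omega))\times L^2(\Omega),
\]
\[
\mathcal{F}(y,u)=\Big(\frac{\partial y}{\partial t}+Ay+a_M(\cdot,\cdot,y)-u,\,y(0)-y_0\Big),
\]
so that $\mathcal{F}(y^M_u,u)=(0,0)$ for every $u\in L^2(Q)$ by Theorem \ref{T4.1}.

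The main analytic step is to verify that $\mathcal{F}$ is of class $C^1$. All summands are linear and continuous except the Nemytskii operator $y\mapsto a_M(\cdot,\cdot,y)$, which must be shown to be $C^1$ from $W(0,T)$ into $L^2(Q)$. Here I would exploit the parabolic embedding $W(0,T)\hookrightarrow L^s(Q)$ with $s=2+\frac{4}{n}>2$ (valid for $n=2,3$), together with the uniform bound $|\partial a_M/\partial y|\le C_{a,M+1}$ and its continuity in $y$. Then Hölder's inequality with the splitting $\frac{1}{2}=\frac{s-2}{2s}+\frac{1}{s}$ yields
\[
\Big\|a_M(\cdot,\cdot,y+h)-a_M(\cdot,\cdot,y)-\frac{\partial a_M}{\partial y}(\cdot,\cdot,y)\,h\Big\|_{L^2(Q)}\le \Big\|\!\int_0^1\!\Big[\frac{\partial a_M}{\partial y}(\cdot,\cdot,y+th)-\frac{\partial a_M}{\partial y}(\cdot,\cdot,y)\Big]\,dt\Big\|_{L^{\frac{2s}{s-2}}(Q)}\,\|h\|_{L^s(Q)},
\]
and a dominated-convergence argument (applied along any subsequence $h_k\to 0$ in $L^s(Q)$ that converges a.e.) shows that the first factor tends to $0$ as $\|h\|_{W(0,T)}\to 0$, giving Fréchet differentiability with derivative $h\mapsto(\partial a_M/\partial y)(\cdot,\cdot,y)\,h$. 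Continuity of this derivative in $y$ (as a bounded operator from $W(0,T)$ to $L^2(Q)$) follows from the same type of estimate with $h$ replaced by an arbitrary unit vector and $\partial a_M/\partial y(\cdot,\cdot,y+th)$ replaced by $\partial a_M/\partial y(\cdot,\cdot,y')$.

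The partial derivative $\partial\mathcal{F}/\partial y(y^M_u,u):W(0,T)\to L^2(0,T;H^{-1}(\Omega))\times L^2(\Omega)$ acts by
\[
z\mapsto\Big(\frac{\partial z}{\partial t}+Az+\frac{\partial a_M}{\partial y}(\cdot,\cdot,y^M_u)\,z,\,z(0)\Big),
\]
and is an isomorphism: the coefficient $(\partial a_M/\partial y)(\cdot,\cdot,y^M_u)$ is measurable, bounded by $C_{a,M+1}$, and bounded below by $\min(0,C_a)$, so classical linear parabolic theory yields unique solvability with continuous dependence on the data $(f,z_0)$ in the target space. The implicit function theorem therefore produces a local $C^1$-representation of $u\mapsto y$, and since equation \eqref{E4.4} admits a globally unique solution by Theorem \ref{T4.1}, the local pieces coincide with the globally defined $G_M$, so $G_M$ is of class $C^1$ on all of $L^2(Q)$ with $G_M'(u)v=z_v$ solving \eqref{E4.7}.

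The main obstacle is the Nemytskii differentiability step, because $y\mapsto a_M(\cdot,\cdot,y)$ is not Fréchet differentiable from $L^2(Q)$ to itself for genuinely nonlinear $a_M$; it is essential to exploit the strict integrability gain $W(0,T)\hookrightarrow L^{2+4/n}(Q)$ coming from the parabolic embedding, which is precisely where the dimensional restriction $n\le 3$ enters.
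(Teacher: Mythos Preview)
Your proposal is correct and follows essentially the same strategy as the paper: apply the implicit function theorem after proving that the Nemytskii operator $y\mapsto a_M(\cdot,\cdot,y)$ is $C^1$ from $W(0,T)$ into $L^2(Q)$, exploiting a parabolic embedding that gives integrability strictly better than $L^2(Q)$ together with the global bound $|\partial a_M/\partial y|\le C_{a,M+1}$. The only differences are cosmetic---the paper works with the graph space $Y=\{y\in W(0,T):\partial_t y+Ay\in L^2(Q)\}$ and target $L^2(Q)\times L^2(\Omega)$ (rather than $W(0,T)$ and $L^2(0,T;H^{-1}(\Omega))\times L^2(\Omega)$), and uses the anisotropic embedding $W(0,T)\hookrightarrow L^{8/3}(0,T;L^4(\Omega))$ instead of your isotropic $W(0,T)\hookrightarrow L^{2+4/n}(Q)$---but the analytic content is identical.
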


\begin{proof}
	Let us introduce the space
	\[
	Y = \{y \in W(0,T) : \frac{\partial y}{\partial t} + A y \in L^2(Q)\}.
	\]
	This is a Banach space when it is endowed with the graph norm
	\[
	\|y\|_Y = \|y\|_{W(0,T)} + \|\frac{\partial y}{\partial t} + A y\|_{L^2(Q)}.
	\]
	Now, we define the mapping
	\begin{align*}
		&\mathcal{F}_M:Y \times L^2(\Omega) \times L^2(Q) \longrightarrow L^2(Q) \times L^2(\Omega)\\
		&\mathcal{F}_M(y,w,u) = \big(\frac{\partial y}{\partial t} + A y + a_M(\cdot,\cdot,y) - u, y(0) - w\big).
	\end{align*}
	Let us prove that the mapping
	\[
	F_M:W(0,T) \longrightarrow  L^2(Q),\quad F_M(y) = a_M(\cdot,\cdot,y)
	\]
	is of class $C^1$ with
	\[
	DF_M:W(0,T) \longrightarrow \mathcal{L}(W(0,T),L^2(Q)),\quad DF_M(y)z= \frac{\partial a_M}{\partial s}(\cdot,\cdot,y)z.
	\]
	First, we observe that a standard application of a Gagliardo-Nirenberg inequality leads to
	\[
	\|z\|_{L^{\frac{8}{3}}(0,T;L^4(\Omega))} \le C\|z\|^{\frac{1}{4}}_{L^\infty(0,T;L^2(\Omega))}\|z\|^{\frac{3}{4}}_{L^2(0,T;H_0^1(\Omega))} \le C'\|z\|_{W(0,T)}
	\]
	for every $z \in W(0,T)$. Using this inequality, \eqref{E4.3}, and the mean value theorem we infer
	\begin{align*}
		&\|F_M(y + z) - F_M(y) - DF_M(y)z\|^2_{L^2(Q)}\\
		&=\int_Q\Big|a_M(x,t,y(x,t)+z(x,t)) - a_M(x,t,y(x,t)) - \frac{\partial a_M}{\partial s}(x,t,y(x,t))z(x,t)\Big|^2 \dx\dt\\
		&=\int_Q\Big|\frac{\partial a_M}{\partial s}(x,t,y(x,t)+\theta(x,t)z(x,t)) - \frac{\partial a_M}{\partial s}(x,t,y(x,t))\Big|^2z^2(x,t)\dx\dt\\
		&\le\int_0^T\Big\|\frac{\partial a_M}{\partial s}(\cdot,t,y(t)+\theta(t)z(t)) - \frac{\partial a_M}{\partial s}(\cdot,t,y(t))\Big\|_{L^4(\Omega)}^2\|z(t)\|^2_{L^4(\Omega)}\dt\\
		&\le\Big\|\frac{\partial a_M}{\partial s}(\cdot,\cdot,y+\theta z) - \frac{\partial a_M}{\partial s}(\cdot,\cdot,y)\Big\|^2_{L^8(0,T;L^4(\Omega))}\|z\|^2_{L^{\frac{8}{3}}(0,T;L^4(\Omega))}.
	\end{align*}
	From here we deduce
	\[
	\lim_{\|z\|_{W(0,T)} \to 0}\frac{\|F_M(y + z) - F_M(y) - DF_M(y)z\|_{L^2(Q)}}{\|z\|_{W(0,T)}} = 0.
	\]
	Hence, $F_M$ is Fr\'echet differentiable. The continuity of $DF_M$ is immediate and, consequently, $F_M$ is of class $C^1$. Using this and the continuity of the embedding $Y \subset W(0,T) \subset C([0,T];L^2(\Omega))$, we conclude that $\mathcal{F}_M$ is of class $C^1$. Moreover, we have $\mathcal{F}_M(y^M_u,y_0,u) = (0,0)$. An easy application of the implicit function theorem proves Theorem \ref{T4.2}.
	\qed\end{proof}

For every $M > 0$ we consider the control problems
\[
\PbM \quad  \inf_{u \in \uad \cap L^2(Q)} J_M(u):= \frac{1}{2}\int_Q (y^M_u(x,t) - y_d(x,t))^2\dx\dt + \frac{\kappa}{2}\int_Q u(x,t)^2\dx\dt,
\]
where $y^M_u$ denotes the solution of \eqref{E4.4}. Problem \PbM has at least a solution $u_M$. This is consequence of the coercivity of $J_M$ on $L^2(Q)$, the fact that $\uad \cap L^2(Q)$ is closed and convex in $L^2(Q)$, and the lower semicontinuity of $J_M$ with respect to the weak topology of $L^2(Q)$. The last statement follows easily from the estimate \eqref{E4.6} and the compactness of the embedding $W(0,T) \subset L^2(Q)$.

From the chain rule and Theorem \ref{T4.2} we infer that $J_M : L^2(Q) \longrightarrow \mathbb{R}$ is of class $C^1$ and its derivative is given by the expression
\begin{equation}
	J'_M(u)v = \int_Q(\varphi + \kappa u)v\dx\dt,
	\label{E4.8}
\end{equation}
where $\varphi \in W(0,T)$ is the solution of the adjoint state equation
\begin{equation}
	\left\{\begin{array}{l}\displaystyle-\frac{\partial\varphi}{\partial t} + A^*\varphi + \frac{\partial a_M}{\partial y}(x,t,y^M_u)\varphi =  y^M_u - y_d\ \mbox{ in } Q,\\ \varphi = 0  \mbox{ on } \Sigma,\ \ \varphi(T) = 0 \mbox{ in } \Omega.\end{array}\right.
	\label{E4.9}
\end{equation}

\begin{theorem}
	Let $u_M$ be a solution of \PbM. Then, there exist functions $y_M, \varphi_M \in W(0,T)$ and $\mu_M \in L^2(Q)$ such that
	\begin{align}
		&\left\{\begin{array}{l}\displaystyle\frac{\partial y_M}{\partial t} + A y_M + a_M(x,t,y_M) =  u_M \ \mbox{ in } Q,\\ y_M = 0  \mbox{ on } \Sigma,\ \ y_M(0) = y_0 \mbox{ in } \Omega,\end{array}\right.
		\label{E4.10}\\
		&\left\{\begin{array}{l}\displaystyle-\frac{\partial\varphi_M}{\partial t} + A^*\varphi_M + \frac{\partial a_M}{\partial y}(x,t,y_M)\varphi_M =  y_M - y_d\ \mbox{ in } Q,\\ \varphi_M = 0  \mbox{ on } \Sigma,\ \ \varphi_M(T) = 0 \mbox{ in } \Omega,\end{array}\right.
		\label{E4.11}\\
		&\int_Q\mu_M(x,t)(u(x,t) - u_M(x,t))\dx\dt \le 0\quad \forall u \in \uad \tcr{\cap L^2(Q)},\label{E4.12}\\
		&\varphi_M + \kappa u_M + \mu_M = 0.\label{E4.13}
	\end{align}
	\label{T4.3}
\end{theorem}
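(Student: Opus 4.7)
The existence of an optimal control $u_M$ for \PbM has already been established in the paragraph preceding the theorem, so it only remains to produce the triple $(y_M,\varphi_M,\mu_M)$ and verify the four relations. The plan is the standard Lagrange multiplier argument made possible by the fact that, after truncation, $J_M$ is $C^1$ on the whole Hilbert space $L^2(Q)$.

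First I would set $y_M := G_M(u_M)$, so that \eqref{E4.10} holds by the definition of $G_M$ in Theorem \ref{T4.2}, and then define $\varphi_M \in W(0,T)$ to be the unique solution of the linear adjoint equation \eqref{E4.11}. Existence, uniqueness and the $W(0,T)$-regularity of $\varphi_M$ follow from the bound \eqref{E4.3} on $\frac{\partial a_M}{\partial y}$ (which makes the zeroth-order coefficient bounded) together with the fact that $y_M - y_d \in L^2(Q)$ by \eqref{E4.5} and \eqref{E2.19}. With this choice, formula \eqref{E4.8} for $J_M'(u_M)$ is precisely $J_M'(u_M)v = \int_Q(\varphi_M+\kappa u_M)v\,dx\,dt$.

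Next I would exploit that $\uad \cap L^2(Q)$ is a convex subset of $L^2(Q)$ containing $u_M$ and that $J_M$ is Fr\'echet differentiable on $L^2(Q)$ (Theorem \ref{T4.2} plus the chain rule). The classical first order necessary condition for a minimizer over a convex set therefore gives
\[
J_M'(u_M)(u-u_M) \ge 0 \qquad \forall u \in \uad \cap L^2(Q),
\]
which, by the above expression for $J_M'(u_M)$, rewrites as
\[
\int_Q (\varphi_M+\kappa u_M)(u-u_M)\,dx\,dt \ge 0 \qquad \forall u \in \uad \cap L^2(Q).
\]

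Finally, I would simply \emph{define}
\[
\mu_M := -(\varphi_M + \kappa u_M).
\]
Because $\varphi_M \in W(0,T) \hookrightarrow L^2(Q)$ and $u_M \in L^2(Q)$, we indeed have $\mu_M \in L^2(Q)$; the identity \eqref{E4.13} then holds by construction, and substituting the definition of $\mu_M$ into the variational inequality above yields precisely \eqref{E4.12}. There is no real obstacle in this argument: unlike the analogous step in Theorem \ref{T3.1}, the multiplier $\mu_M$ lives in $L^2(Q)$ (rather than in the dual of $L^\infty$) because $J_M$ is differentiable on the full $L^2(Q)$, which is exactly why the truncation was introduced. The only point deserving care is verifying that $\varphi_M$ lies in $W(0,T)$, which is where the uniform bound \eqref{E4.3} on $\partial_y a_M$ is essential.
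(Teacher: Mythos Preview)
Your argument is correct and is essentially the paper's own proof: the paper simply states that the proof of Theorem \ref{T4.3} is the same as that of Theorem \ref{T3.1}, and your write-up reproduces exactly that argument adapted to the truncated setting. The only (harmless) difference is that, since $J_M$ is $C^1$ on all of $L^2(Q)$, you can take test functions $u\in\uad\cap L^2(Q)$ directly and skip the density/projection step used in Theorem \ref{T3.1} to pass from $\uad\cap L^\infty(Q)$ to $\uad$.
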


The proof of this theorem is the same as the one of Theorem \ref{T3.1}.

\begin{theorem}
	Let $(u_M,y_M,\varphi_M,\mu_M)$ be as in Theorem \ref{T4.3}. Then, there exists a constant $K_\infty > 0$ such that
	\begin{equation}
		\|(u_M,y_M,\varphi_M,\mu_M)\|_{L^\infty(Q)^4} \le K_\infty \quad \forall M > 0.
		\label{E4.14}
	\end{equation}
	\label{T4.4}
\end{theorem}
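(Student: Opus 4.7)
The plan is to bootstrap from a crude $L^2(Q)$-bound for $u_M$---obtained by comparing $u_M$ with the admissible control $u\equiv 0$---up to an $L^\infty(Q)$-bound that is independent of $M$. The mechanism that breaks through the $L^2$ ceiling is the pointwise non-expansiveness of the $L^2(\Omega)$-projection onto $B_\gamma$: this forces $u_M$ to inherit the $L^\infty$-regularity of $\varphi_M$, which in turn comes from standard parabolic $L^\infty$-estimates applied to the adjoint equation \eqref{E4.11}.

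First I would note that $0\in\uad\cap L^2(Q)$, so $J_M(u_M)\le J_M(0)$; combined with \eqref{E4.5} applied to $G_M(0)$, this gives, uniformly in $M$, an $L^2(Q)$-bound on $u_M$ and an $L^\infty(0,T;L^2(\Omega))\cap L^2(0,T;H_0^1(\Omega))$-bound on $y_M$. The parabolic embedding $L^\infty(L^2)\cap L^2(H_0^1)\hookrightarrow L^{q_0}(Q)$ with $q_0=2(n+2)/n$ then yields a uniform $L^{q_0}(Q)$-bound on $y_M$, and the short computation $\tfrac{1}{q_0}+\tfrac{n}{2q_0}=\tfrac{n}{4}<1$ for $n=2,3$ shows $L^{q_0}(Q)$ is admissible in the sense of \eqref{E2.3}. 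The adjoint \eqref{E4.11} thus has right-hand side $y_M-y_d$ bounded uniformly in an admissible Lebesgue space, and its zero-order coefficient $c_M:=\frac{\partial a_M}{\partial y}(\cdot,\cdot,y_M)$ obeys the $M$-independent one-sided bound $c_M\ge\min(0,C_a)$ by \eqref{E4.1}. After the substitution $\tilde\varphi_M(t)=e^{-|C_a|(T-t)}\varphi_M(t)$, which turns \eqref{E4.11} into an equation with nonnegative zero-order coefficient $c_M+|C_a|\ge 0$, the $L^\infty$-estimates of \cite[Chapter III]{Lad-Sol-Ura68}---which via Stampacchia truncation require only nonnegativity of the coefficient, not an upper bound---yield $\|\varphi_M\|_{L^\infty(Q)}\le C_3$ independently of $M$. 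Arguing then as in the proof of Corollary \ref{C3.1}, \eqref{E4.12}--\eqref{E4.13} produce the pointwise-in-time projection representation $u_M(t)=\proj_{B_\gamma}(-\tfrac{1}{\kappa}\varphi_M(t))$; because the $L^2$-projection onto the $L^1$-ball acts as the soft-thresholding $w\mapsto\sign(w)(|w|-\lambda(w))^+$ with $\lambda(w)\ge 0$, it satisfies the pointwise contraction $|\proj_{B_\gamma}(w)(x)|\le|w(x)|$ a.e., and hence $\|u_M\|_{L^\infty(Q)}\le C_3/\kappa$.

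For the state, I would mimic the decomposition in the proof of Theorem \ref{T2.1}: write $y_M=z_M+\psi_M$ with $z_M$ solving the linear equation \eqref{E2.8} driven by $u_M$, so that \eqref{E2.10} bounds $M_z:=\|z_M\|_{L^\infty(Q)}$ uniformly in $M$. Then $\psi_M$ solves a monotone semilinear equation analogous to \eqref{E2.15} with source $-a_M(\cdot,\cdot,z_M)=-a(\cdot,\cdot,f_M(z_M))$, and since $|f_M(s)|\le|s|$ pointwise, \eqref{E2.4} yields---regardless of whether $M\ge M_z$ or $M<M_z$---the estimate
\[
|a_M(x,t,z_M(x,t))|\le C_{a,M_z+1}(M_z+1)+|a(x,t,0)|,
\]
so the source lies in $L^{\hat r}(0,T;L^{\hat p}(\Omega))$ with an $M$-independent bound. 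An $L^\infty$-estimate of the type \eqref{E2.13} then delivers $\|\psi_M\|_{L^\infty(Q)}\le C_4$ uniformly, hence $\|y_M\|_{L^\infty(Q)}\le M_z+C_4$; finally $\mu_M=-\varphi_M-\kappa u_M$ is controlled by the previous bounds, and taking $K_\infty$ as the maximum of the four resulting constants completes the proof. The main obstacle is ensuring $M$-independence throughout the bootstrap: the $L^\infty$-estimate for $\varphi_M$ must not require an upper bound on $c_M$ (which grows as $C_{a,M+1}$ with $M$), and the $L^\infty$-estimate for $y_M$ must accommodate the regime $M<M_z$ in which the truncation $f_M$ is active on $z_M$. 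Both difficulties are resolved by the same structural observation---that $f_M$ is a pointwise contraction---so that the effective argument of $a$ always lies in $[-M_z-1,M_z+1]$, a range already controlled by the $M$-independent linear estimate \eqref{E2.10}.
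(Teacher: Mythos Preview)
Your argument follows the paper's proof essentially step for step: compare with $u\equiv 0$ to bound $u_M$ in $L^2(Q)$ and $y_M$ in $L^\infty(0,T;L^2(\Omega))$, use an exponential substitution to make the zero-order coefficient of the adjoint nonnegative and invoke the Ladyzhenskaya--Solonnikov--Ural'tseva $L^\infty$ estimate for $\varphi_M$, then extract $\|u_M\|_{L^\infty(Q)}\le \kappa^{-1}\|\varphi_M\|_{L^\infty(Q)}$ and close with the state and multiplier bounds. Your soft-thresholding contraction $|\proj_{B_\gamma}(w)(x)|\le |w(x)|$ is just a repackaging of the paper's observation that $u_M$ and $\mu_M$ share sign a.e., whence $\kappa|u_M|\le|\kappa u_M+\mu_M|=|\varphi_M|$; and your $L^{q_0}(Q)$ embedding is an unnecessary detour, since $y_M\in L^\infty(0,T;L^2(\Omega))$ is already admissible ($\tfrac{1}{\infty}+\tfrac{n}{4}<1$ for $n\le 3$), which is what the paper uses directly.

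One small correction: the claim ``$|f_M(s)|\le |s|$ pointwise'' is false. For $s=M+\tau$ with $\tau\in(0,1)$ one computes $f_M(s)=M+\tau+\tau^2-\tau^3=s+\tau^2(1-\tau)>s$. What is true, and what you actually need, is $|f_M(s)|\le M+1$ for all $s$ together with $f_M(s)=s$ for $|s|\le M$; combining these gives $|f_M(z_M(x,t))|\le M_z+1$ in every regime, so your displayed bound $|a_M(x,t,z_M)|\le C_{a,M_z+1}(M_z+1)+|a(x,t,0)|$ survives with this corrected justification.
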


\begin{proof}
	As in the proof for the first statement of \eqref{E3.8}, we have that \eqref{E4.12} and \eqref{E4.13} yield $|\mu_M(x,t)||u_M(x,t)| = \mu_M(x,t)u_M(x,t)$ for almost all $(x,t) \in Q$.
	
	We denote by $y_M^0$ the solution of \eqref{E4.4} associated with the control identically zero. Then, according to Theorem \ref{T4.1}, inequality \eqref{E4.5} implies that
	\[
	\|y_M^0\|_{C(0,T;L^2(\Omega))} \le K\big(\|a(\cdot,\cdot,0)\|_{L^2(Q)} + \|y_0\|_{L^2(\Omega)}\big)\quad \forall M > 0.
	\]
	From this inequality we infer
	\[
	\|y^0_M\|_{L^2(Q)} \le C_1 = \sqrt{T}K\big(\|a(\cdot,\cdot,0)\|_{L^2(Q)} + \|y_0\|_{L^2(\Omega)}\big) \quad \forall M > 0.
	\]
	Since $u_M$ is solution of \PbM and $u \equiv 0$ is an admissible control for \PbM we get
	\[
	\frac{\kappa}{2}\|u_M\|^2_{L^2(Q)} \le J_M(u_M) \le J_M(0) = \frac{1}{2}\|y^0_M - y_d\|^2_{L^2(Q)}.
	\]
	This leads to
	\[
	\|u_M\|_{L^2(Q)} \le \frac{1}{\sqrt{\kappa}}\|y^0_M - y_d\|_{L^2(Q)} \le C_2 = \frac{1}{\sqrt{\kappa}}(C_1 + \|y_d\|_{L^2(Q)}) \quad \forall M > 0.
	\]
	Using again \eqref{E4.5} and this estimate we deduce
	\[
	\|y_M\|_{L^\infty(0,T;L^2(\Omega))} \le C_3 = K_2\big(C_2 + \|a(\cdot,\cdot,0)\|_{L^2(Q)} + \|b(\cdot,\cdot,0)\|_{L^2(\Sigma)} + \|y_0\|_{L^2(\Omega)}\big) \quad \forall M > 0.
	\]
	Using this estimate we can infer the boundedness of $\varphi_M$ by a constant independent of $M$. The idea of the proof is to make the substitution $\varphi_M(x,t) = {\rm e}^{-|C_a|t}\psi_M(x,t)$, where $C_a$ is given in \eqref{E2.2}. Then, $\psi$ satisfies the equation
	\[
	\left\{\begin{array}{l}\displaystyle-\frac{\partial\psi_M}{\partial t} + A^*\psi_M + \big(\frac{\partial a_M}{\partial y}(x,t,y_M) + |C_a|\big)\psi_M =  {\rm e}^{|C_a|t}(y_M - y_d)\ \mbox{ in } Q,\\ \psi_M = 0  \mbox{ on } \Sigma,\ \ \psi_M(T) = 0 \mbox{ in } \Omega.\end{array}\right.
	\]
	Since \eqref{E4.1} implies that $\frac{\partial a_M}{\partial y}(x,t,y^M_u) + |C_a| \ge 0$, we  apply \cite[Theorem III-7.1]{Lad-Sol-Ura68} to deduce the existence of a constant $C >0 $ independent of $M$ such that
	\begin{align*}
		&\|\psi_M\|_{L^\infty(Q)} \le C\big({\rm e}^{|C_a|T}\big[\|y_M\|_{L^\infty(0,T;L^2(\Omega))} + \|y_d\|_{L^\rd(0,T;L^\pd(\Omega))}\big]\big)\\
		&\le C_4 = C\big({\rm e}^{|C_a|T}\big[C_3 + \|y_d\|_{L^\rd(0,T;L^\pd(\Omega))}\big]\big)\quad \forall M > 0.
	\end{align*}
	From here we infer the estimate $\|\varphi_M\|_{L^\infty(Q)} \le \|\psi_M\|_{L^\infty(Q)} \le C_4$ for every $M > 0$. Now, using that $u_M$ and $\mu_M$ have the same sign almost everywhere in $Q$, we deduce from \eqref{E4.13}
	\[
	\kappa|u_M(x,t)| \le |\kappa u_M(x,t) + \mu_M(x,t)| = |\varphi_M(x,t)| \le C_4,
	\]
	which proves that $\|u_M\|_{L^\infty(Q)} \le \frac{C_4}{\kappa}$ for every $M > 0$. Moreover, the bounds from $u_M$ and $\varphi_M$ along with \eqref{E4.13} imply that $\|\mu_M\|_{L^\infty(Q)} \le 2C_4$. Finally, the estimate of $y_M$ in $L^\infty(Q)$ independently of $M$ follows from \eqref{E4.10}, Theorem \ref{T2.1}, and the estimate for $u_M$.
	\qed\end{proof}

\begin{remark}
	The assumption $\kappa > 0$ was used in an essential manner in the above proof.
	\label{R4.1}
\end{remark}

\begin{theorem}
	Let $M \ge K_\infty$ be arbitrary, where $K_\infty$ satisfies \eqref{E4.14}. Let $u_M$ be a solution of \PbM. Then, $u_M$ is a solution of \Pb.
	\label{T4.5}
\end{theorem}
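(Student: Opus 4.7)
The plan is to exploit the uniform $L^\infty$-bound provided by Theorem \ref{T4.4} in order to deactivate the truncation $f_M$ both on the $(P_M)$-optimal trajectory and, by introducing an auxiliary larger truncation parameter $\tilde M$, on the trajectory associated with an arbitrary admissible competitor $u \in \uad \cap L^\infty(Q)$. The key point is that the constant $K_\infty$ in Theorem \ref{T4.4} does not depend on $M$, so all solutions of $(P_{M'})$ for $M' \ge K_\infty$ live below the truncation level.

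\textbf{Step 1: $u_M$ is admissible for \Pb\ and $J(u_M) = J_M(u_M)$.} Since $M \ge K_\infty$ and $\|y_M\|_{L^\infty(Q)} \le K_\infty$ by Theorem \ref{T4.4}, the function $f_M$ acts as the identity on the range of $y_M$, hence $a_M(x,t,y_M(x,t)) = a(x,t,y_M(x,t))$ for a.a.\ $(x,t) \in Q$. Consequently $y_M$ solves \eqref{E1.1} with control $u_M$; by the uniqueness statement of Theorem \ref{T2.1}, $y_M = y_{u_M}$, and in particular $J(u_M) = J_M(u_M)$. Combined with the bound $\|u_M\|_{L^\infty(Q)} \le K_\infty$ (again from Theorem \ref{T4.4}), this shows that $u_M \in \uad \cap L^\infty(Q)$ is admissible for \Pb.

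\textbf{Step 2: Introduce an auxiliary truncation $\tilde M$ adapted to an arbitrary competitor.} Fix $u \in \uad \cap L^\infty(Q)$; by Theorem \ref{T2.1}, $y_u \in L^\infty(Q)$. Choose
\[
\tilde M \ge \max\bigl\{M,\ \|y_u\|_{L^\infty(Q)}\bigr\}.
\]
Then $f_{\tilde M}(y_u) = y_u$, so $y_u$ also solves the $\tilde M$-truncated state equation with control $u$, and therefore $J_{\tilde M}(u) = J(u)$. Since $\tilde M \ge K_\infty$, problem \mbox{\rm (P$_{\tilde M}$)}\xspace\ admits a solution $u_{\tilde M}$ to which the argument of Step 1 applies verbatim: $y_{\tilde M} = y_{u_{\tilde M}}$ and $J_{\tilde M}(u_{\tilde M}) = J(u_{\tilde M})$. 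Moreover, because $\|y_{\tilde M}\|_{L^\infty(Q)} \le K_\infty \le M$, the state $y_{\tilde M}$ also satisfies the $M$-truncated equation driven by $u_{\tilde M}$, so by uniqueness $y^M_{u_{\tilde M}} = y_{\tilde M}$ and $J_M(u_{\tilde M}) = J(u_{\tilde M})$.

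\textbf{Step 3: Chain the three optimality/consistency inequalities.} Since $u_{\tilde M} \in \uad \cap L^2(Q)$ is admissible for \PbM, optimality of $u_M$ for \PbM\ gives $J_M(u_M) \le J_M(u_{\tilde M})$. Optimality of $u_{\tilde M}$ for \mbox{\rm (P$_{\tilde M}$)}\xspace\ gives $J_{\tilde M}(u_{\tilde M}) \le J_{\tilde M}(u)$. Combining with the identifications from Steps 1 and 2,
\[
J(u_M) = J_M(u_M) \le J_M(u_{\tilde M}) = J(u_{\tilde M}) = J_{\tilde M}(u_{\tilde M}) \le J_{\tilde M}(u) = J(u).
\]
Since $u \in \uad \cap L^\infty(Q)$ was arbitrary, $u_M$ is a global minimizer of \Pb.

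\textbf{Where the difficulty sits.} The genuinely nontrivial input is Theorem \ref{T4.4}: without a uniform-in-$M$ bound on $(u_M,y_M,\varphi_M,\mu_M)$, one could not guarantee that the truncation is simultaneously inactive on the solution of $(P_M)$ and on the solution of the enlarged $(P_{\tilde M})$, and the comparison in Step 3 would collapse. Once that uniform bound is in hand, the proof itself is a bookkeeping exercise: the introduction of the auxiliary threshold $\tilde M$ depending on the competitor $u$ is the only mildly subtle step, and it serves precisely to bridge $J_M$ and $J$ on arbitrary admissible controls, whereas on solutions of the $(P_{M'})$ problems themselves the two functionals already coincide.
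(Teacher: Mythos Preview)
Your proof is correct and follows essentially the same approach as the paper: identify $J_M(u_M)=J(u_M)$ via the $L^\infty$-bound from Theorem \ref{T4.4}, and for an arbitrary competitor $u$ pass through an auxiliary truncated problem at level $\tilde M\ge\max\{M,\|y_u\|_{L^\infty(Q)}\}$ (the paper calls it $M_0$) to chain the inequalities. The only cosmetic difference is that the paper first treats the easy case $\|y_u\|_{L^\infty(Q)}\le M$ directly, whereas you absorb both cases into a single argument by always enlarging the truncation parameter.
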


\begin{proof}
	First we observe that $\|y_M\|_{L^\infty(Q)} \le M$ and hence $a_M(x,t,y_M) = a(x,t,y_M)$. Therefore, $y_M$ is the solution of \eqref{E1.1} corresponding to $u_M$ and, consequently, $J_M(u_M) = J(u_M)$.
	
	Given $u \in \uad \cap L^\infty(Q)$ arbitrary, let $y_u$ be the associated solution of \eqref{E1.1} and set $M_0 = \|y_u\|_{L^\infty(Q)}$. If $M_0 \le M$, then it is obvious that $a_M(x,t,y_u) = a(x,t,y_u)$ and, hence, $J_M(u) = J(u)$. Therefore, the optimality of $u_M$ implies $J(u_M) = J_M(u_M) \le J_M(u) = J(u)$.
	
	If $M_0 > M$, we take a solution $u_{M_0}$ of (P$_{M_0}$). Then, Theorem \ref{T4.4} implies that the solution $y_{M_0}$ of \eqref{E4.10} with $M$ replaced by $M_0$ satisfies $\|y_{M_0}\|_{L^\infty(Q)} \le M$ and, consequently, $a_{M_0}(x,t,y_{M_0}) = a_M(x,t,y_{M_0}) = a(x,t,y_{M_0})$ and $J_{M_0}(u_{M_0}) = J_M(u_{M_0}) = J(u_{M_0})$.
	These facts along with the optimality of $u_M$ and $u_{M_0}$ lead to
	\[
	J(u_M) = J_M(u_M) \le J_M(u_{M_0}) = J_{M_0}(u_{M_0}) \le J_{M_0}(u) = J(u),
	\]
	which proves that $u_M$ is a solution of \Pb.
	\qed\end{proof}

\begin{remark}
	Let us compare problem \Pb with the control problems
	\[
	\Pbr \quad  \inf_{u \in \uad \cap L^r(0,T;L^2(\Omega))} J(u):= \frac{1}{2}\int_Q (y_u(x,t) - y_d(x))^2\dx\dt + \frac{\kappa}{2}\int_Q u(x,t)^2\dx\dt,
	\]
	where $r \in (\frac{4}{4 - n},\infty)$. We observe that Theorems \ref{T2.1} and \ref{T2.2}, and Corollary \ref{C2.1} are applicable to deduce that any solution of \Pbr satisfies the optimality conditions \eqref{E3.2}--\eqref{E3.5}. Then, the arguments of Theorem \ref{T4.4} apply to deduce that any solution of \Pbr belongs to $L^\infty(Q)$. Let us check that problems \Pb and \Pbr are equivalent in the sense that both have the same solutions. Indeed, since $\uad \cap L^r(0,T;L^2(\Omega)) \supset \uad \cap L^\infty(Q)$, it is obvious that every solution of \Pbr is a solution of \Pb. Conversely, let $\bar u$ be a solution of \Pb and take $u \in \uad \cap L^r(0,T;L^2(\Omega))$ arbitrarily. For every integer $k \ge 1$ we set $u_k = \proj_{[-k,+k]}(u)$. Then, it is obvious that $u_k \in \uad \cap L^\infty(Q)$ and $u_k \to u$ in $L^r(0,T;L^2(\Omega))$. Using the optimality of $\bar u$ we have $J(\bar u) \le J(u_k)$ for all $k$, and passing to the limit we infer that $J(\bar u) \le J(u)$. Since $u$ was arbitrary, this implies that $\bar u$ is a solution of \Pbr.
	\label{R4.2}
\end{remark}

\section{Second Order Optimality Conditions}
\label{S5}

We consider the Lipschitz and convex mapping $j:L^1(\Omega) \longrightarrow \mathbb{R}$ defined by $j(v) = \|v\|_{L^1(\Omega)}$. Its directional derivative is given by the expression
\begin{equation}
	j'(u;v) = \int_{\Omega^+_u}v(x)\dx - \int_{\Omega^-_u}v(x)\dx + \int_{\Omega^0_u}|v(x)|\dx \quad \forall u, v \in L^1(\Omega),
	\label{E5.1}
\end{equation}
where
\[
\Omega^+_u =\{x \in \Omega : u(x) > 0\}, \ \Omega^-_u =\{x \in \Omega : u(x) < 0\} \text{ and } \Omega_u^0 = \Omega \setminus (\Omega_u^+ \cup \Omega_u^-).
\]

In order to derive the second order optimality conditions for \Pb, we define the cone of critical directions. For a control $\bar u \in \uad \cap L^\infty(Q)$ satisfying the first order optimality conditions \eqref{E3.2}--\eqref{E3.5} we set
\[
C_{\bar u} = \Big\{v \in L^2(Q) : J'(\bar u)v = 0 \text{ and } j'(\bar u(t);v(t)) \left\{\begin{array}{cl} = 0&\text{if } t \in I_\gamma^+,\\\le 0&\text{if } t \in I_\gamma\setminus I^+_\gamma,\end{array}\right.\Big\},
\]
where
\[
I_\gamma = \{t \in (0,T) : j(\bar u(t)) = \gamma\} \ \ \text{ and }\ \ I^+_\gamma = \{t \in I_\gamma : \bar\mu(t) \not\equiv 0 \text{ in } \Omega\}.
\]

We first prove the second order necessary conditions. Given an element $v \in C_{\bar u}$, the classical approach to prove these second order conditions consists of taking a sequence $\{v_k\}_{k = 1}^\infty$ converging to $v$ such that $\bar u + \rho v_k$ is a feasible control for \Pb for every $\rho > 0$ small enough. The way of taking this sequence is different from the case where  box control constraints are considered. The main reason for this difference is that the functional $j$, defining the constraint, is not differentiable and that it is non-local in space. Even the approach followed in the case where $j$ is involved in the cost functional cannot be used in our framework; see \cite{CHW2017}. The proof makes an essential use of the following lemma.

\begin{lemma}
	Let $v \in L^2(Q)$ satisfy $j'(\bar u(t);v(t))  = 0$ for almost all $t \in I_\gamma^+$. Then,  $J'(\bar u)v = 0$ holds if and only if
	\begin{equation}
		\|\bar\mu(t)\|_{L^\infty(\Omega)}|v(x,t)| = \bar\mu(x,t)v(x,t)\text{ for a.a. } (x,t) \in \Omega_{\bar u(t)}^0 \times I^+_\gamma.
		\label{E5.2}
	\end{equation}
	As a consequence, every element $v$ of $C_{\bar u}$ satisfies \eqref{E5.2}.
	\label{L5.1}
\end{lemma}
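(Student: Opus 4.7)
The plan is to use the first-order optimality condition \eqref{E3.5} to rewrite $J'(\bar u)v$ purely in terms of $\bar\mu$, and then to exploit the pointwise structure of $\bar\mu$ provided by \eqref{E3.8} to reduce the statement to the elementary inequality $\bar\mu v \le \|\bar\mu(t)\|_{L^\infty(\Omega)}|v|$ on $\Omega^0_{\bar u(t)}$.

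First I would combine \eqref{E2.20} with \eqref{E3.5} to get
\[
J'(\bar u)v = \int_Q(\bar\varphi + \kappa\bar u)v\dx\dt = -\int_Q\bar\mu(x,t)v(x,t)\dx\dt.
\]
By the second statement of \eqref{E3.8}, $\bar\mu(t)\equiv 0$ whenever $\|\bar u(t)\|_{L^1(\Omega)} < \gamma$, and by the very definition of $I^+_\gamma$ the same is true on $I_\gamma \setminus I^+_\gamma$. Hence only $I^+_\gamma$ contributes, and it suffices to analyze the inner integral for $t \in I^+_\gamma$.

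For such $t$ I would split $\Omega = \Omega^+_{\bar u(t)} \cup \Omega^-_{\bar u(t)} \cup \Omega^0_{\bar u(t)}$. The first statement of \eqref{E3.8} forces $\bar\mu$ to have the sign of $\bar u$, while the third statement gives $|\bar\mu(x,t)| = \alpha(t) := \|\bar\mu(t)\|_{L^\infty(\Omega)}$ on $\supp(\bar u(t))$; together these give $\bar\mu = \alpha(t)$ on $\Omega^+_{\bar u(t)}$ and $\bar\mu = -\alpha(t)$ on $\Omega^-_{\bar u(t)}$. Using \eqref{E5.1} and the hypothesis $j'(\bar u(t);v(t))=0$ to substitute $\int_{\Omega^+_{\bar u(t)}}\!v\dx - \int_{\Omega^-_{\bar u(t)}}\!v\dx = -\int_{\Omega^0_{\bar u(t)}}\!|v|\dx$ then yields
\[
\int_\Omega \bar\mu(x,t)v(x,t)\dx = \int_{\Omega^0_{\bar u(t)}}\big[\bar\mu(x,t)v(x,t) - \alpha(t)|v(x,t)|\big]\dx.
\]
The key observation is that on $\Omega^0_{\bar u(t)}$ we have $\bar\mu v \le |\bar\mu||v| \le \alpha(t)|v|$, so the integrand is pointwise nonpositive. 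Integrating over $I^+_\gamma$ gives $J'(\bar u)v \ge 0$, with equality if and only if $\bar\mu v = \alpha(t)|v|$ almost everywhere on $\Omega^0_{\bar u(t)} \times I^+_\gamma$, which is precisely \eqref{E5.2}. This simultaneously yields both implications. The consequence for $C_{\bar u}$ is then immediate, since its defining conditions include both $J'(\bar u)v=0$ and $j'(\bar u(t);v(t))=0$ on $I^+_\gamma$.

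I do not anticipate a genuine obstacle here: the argument is algebraic, driven entirely by \eqref{E3.5} and the three statements of \eqref{E3.8}. The only delicate point is verifying that on $\Omega^\pm_{\bar u(t)}$ the multiplier $\bar\mu$ equals $\pm\alpha(t)$ exactly (not merely a value of matching sign), which is exactly where the first and third statements of \eqref{E3.8} must be combined, together with the restriction $t \in I^+_\gamma$ that enables the third statement to apply.
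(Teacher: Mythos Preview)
Your proposal is correct and follows essentially the same route as the paper: rewrite $J'(\bar u)v = -\int_Q\bar\mu v$ via \eqref{E3.5}, reduce to $I^+_\gamma$ using \eqref{E3.8}, split $\Omega$ and use the first and third statements of \eqref{E3.8} to identify $\bar\mu=\pm\|\bar\mu(t)\|_{L^\infty(\Omega)}$ on $\Omega^{\pm}_{\bar u(t)}$, substitute with $j'(\bar u(t);v(t))=0$, and conclude from the pointwise inequality $\bar\mu v\le\|\bar\mu(t)\|_{L^\infty(\Omega)}|v|$. The paper's argument is line-for-line the same, arriving at the identical integral identity $J'(\bar u)v=\int_{I^+_\gamma}\int_{\Omega^0_{\bar u(t)}}[\|\bar\mu(t)\|_{L^\infty(\Omega)}|v|-\bar\mu v]\dx\dt$.
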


\begin{proof}
	From \eqref{E2.20}, \eqref{E3.5}, and \eqref{E3.8} we infer
	\begin{align*}
		&J'(\bar u)v = \int_Q(\bar\varphi + \kappa\bar u)v\dx \dt = -\int_Q\bar\mu v\dx \dt = -\int_{I_\gamma^+}\int_\Omega\bar\mu v\dx \dt\\
		&= -\int_{I^+_\gamma}\|\bar\mu(t)\|_{L^\infty(\Omega)}\left\{\int_{\Omega_{\bar u(t)}^+}v \dx - \int_{\Omega_{\bar u(t)}^-}v \dx\right\} - \int_{I^+_\gamma}\int_{\Omega^0_{\bar u(t)}}\bar\mu v\dx \dt.
	\end{align*}
	Using that $j'(\bar u(t);v(t))  = 0$ for almost all $t \in I_\gamma^+$ and \eqref{E5.1} we get
	\[
	\int_{\Omega_{\bar u(t)}^+}v \dx - \int_{\Omega_{\bar u(t)}^-}v \dx = -\int_{\Omega^0_{\bar u(t)}}|v|dx.
	\]
	Inserting this in the previous identity we obtain
	\[
	J'(\bar u)v = \int_{I_\gamma^+}\int_{\Omega^0_{\bar u(t)}}[\|\mu(t)\|_{L^\infty(\Omega)}|v| - \bar\mu v]\dx \dt.
	\]
	Since $\bar\mu v \le \|\mu(t)\|_{L^\infty(\Omega)}|v|$, we deduce from the above equality that $J'(\bar u)v = 0$ if and only if \eqref{E5.2} holds.
\end{proof}

\begin{theorem}
	Let $\bar u$ be a local solution of \Pb in the $L^r(0,T;L^2(\Omega))$ sense  with $r > \frac{4}{4-n}$. Then, the inequality $J''(\bar u)v^2 \ge 0$ holds for all $v \in C_{\bar u}$.
	\label{T5.1}
\end{theorem}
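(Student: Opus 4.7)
The plan is the classical approach: construct a sequence of admissible variations $\bar u + \rho_k v_k \in \uad$ with $v_k \to v$ in $L^2(Q)$ and $\rho_k \downarrow 0$, then combine the local optimality of $\bar u$ with a second-order Taylor expansion of $J$ at $\bar u$ to conclude $J''(\bar u) v^2 \ge 0$. Granting such a sequence with the additional property $J'(\bar u) v_k = o(\rho_k)$, the inequality $J(\bar u + \rho_k v_k) \ge J(\bar u)$ becomes
\[
0 \le \rho_k J'(\bar u) v_k + \tfrac{\rho_k^2}{2} J''(\bar u) v_k^2 + o(\rho_k^2).
\]
Dividing by $\rho_k^2$ and passing to the limit, using the continuous extension of $J''(\bar u)$ to $L^2(Q)\times L^2(Q)$ noted after Corollary \ref{C2.1}, yields the inequality, since $J'(\bar u) v = 0$ by definition of $C_{\bar u}$.

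The key step is the construction of the $v_k$. My candidate is
\[
v_k(x,t) = v(x,t) - \tfrac{1}{k}\sign(\bar u(x,t))\,\chi_{I_\gamma}(t),
\]
truncated in $L^\infty$ and localized so that $v_k \in L^\infty(Q)$ and $v_k \to v$ in $L^2(Q)$. A direct computation from \eqref{E5.1}, using $\|\bar u(t)\|_{L^1(\Omega)} = \gamma > 0$ on $I_\gamma$, gives
\[
j'(\bar u(t); v_k(t)) \le j'(\bar u(t); v(t)) - \tfrac{1}{k}\bigl(|\Omega^+_{\bar u(t)}| + |\Omega^-_{\bar u(t)}|\bigr) < 0 \quad \text{for } t \in I_\gamma,
\]
while off $I_\gamma$ the gap $\gamma - j(\bar u(t))$ is positive. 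Combined with the second-order remainder bound
\[
j(\bar u(t) + \rho v_k(t)) - j(\bar u(t)) - \rho\, j'(\bar u(t); v_k(t)) \le 2\rho \|v_k\|_{L^\infty(Q)}\, \bigl|\{x \in \Omega : 0 < |\bar u(x,t)| < \rho \|v_k\|_{L^\infty(Q)}\}\bigr|,
\]
the strict negativity of $j'(\bar u(t); v_k(t))$ dominates for $\rho_k$ small, yielding $\bar u + \rho_k v_k \in \uad$.

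The main obstacle is the lack of a uniform lower bound in $t$ on $|\Omega^+_{\bar u(t)}| + |\Omega^-_{\bar u(t)}|$ over $I_\gamma$: a single $\rho_k$ need not secure feasibility for all $t$ simultaneously. I would circumvent this by a countable decomposition of $I_\gamma$ according to the size of that quantity (and, similarly, of $(0,T) \setminus I_\gamma$ according to the size of $\gamma - j(\bar u(t))$), combined with a diagonal extraction in $k$ preserving $v_k \to v$ in $L^2(Q)$. The remaining task is to pick $\rho_k$ small enough for feasibility and for $\rho_k v_k \to 0$ in $L^r(0,T;L^2(\Omega))$ (as required by the Taylor expansion of $J$ at $\bar u$), but large enough to ensure $J'(\bar u) v_k = o(\rho_k)$; both conditions are compatible since $J'(\bar u) v_k \to 0$ by continuity of $J'(\bar u)$ on $L^2(Q)$. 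Lemma \ref{L5.1} enters implicitly by guaranteeing that the perturbation $-\tfrac{1}{k}\sign(\bar u)\chi_{I_\gamma}$ is compatible with the critical-cone structure in the $L^2$-limit.
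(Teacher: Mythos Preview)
Your overall strategy is correct, but the construction has a genuine gap that the vague ``diagonal extraction'' does not close, and it differs essentially from the paper's approach.

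The central difficulty is the first-order term. With your choice $v_k = v - \tfrac{1}{k}\sign(\bar u)\chi_{I_\gamma}$ one computes, using \eqref{E3.5} and the first line of \eqref{E3.8},
\[
J'(\bar u)v_k \;=\; J'(\bar u)v + \frac{1}{k}\int_{I_\gamma}\!\int_{\Omega\setminus\Omega^0_{\bar u(t)}}|\bar\mu|\,\mathrm dx\,\mathrm dt \;=\; \frac{C}{k}\;\ge 0,
\]
with $C>0$ whenever $\bar\mu\not\equiv 0$. In the Taylor expansion this term enters with the wrong sign, so you must force $J'(\bar u)v_k/\rho_k\to 0$, i.e.\ $k\rho_k\to\infty$. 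But your own remainder bound for $j$ shows that feasibility at a fixed $t\in I_\gamma$ requires
\[
2\|v_k\|_{L^\infty}\,\bigl|\{x:0<|\bar u(x,t)|<\rho\|v_k\|_{L^\infty}\}\bigr|\;\le\;\frac{1}{k}\,\bigl(|\Omega^+_{\bar u(t)}|+|\Omega^-_{\bar u(t)}|\bigr),
\]
and neither side admits a $t$-uniform bound: the left side is only $o(1)$ as $\rho\to 0$ \emph{pointwise} in $t$, and the right side has no uniform positive lower bound over $I_\gamma$. A countable decomposition of $I_\gamma$ by the size of $|\Omega^\pm_{\bar u(t)}|$ does not help, because a single scalar $\rho_k$ must serve all slices simultaneously, and the competing requirement $k\rho_k\to\infty$ prevents you from letting $\rho_k$ absorb the bad tail. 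The truncation to $L^\infty(Q)$ you mention in passing also changes the computation of $j'(\bar u(t);v_k(t))$ on $\Omega^0_{\bar u(t)}$, which you have not addressed.

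The paper avoids both obstructions by a different construction. Where $\bar u\neq 0$ it takes a \emph{multiplicative} perturbation $g_k(x,t)=\proj_{[-k,k]}\!\bigl(v/|\bar u|\bigr)|\bar u| + \text{correction}\cdot\bar u$, so that for $t\in I_\gamma$ one has the \emph{exact identity} $\|\bar u(t)+\rho v_k(t)\|_{L^1(\Omega)}=\gamma+\rho\,j'(\bar u(t);v(t))\le\gamma$, with no remainder to control. At the same time the construction is designed so that $v_k\in C_{\bar u}$, whence $J'(\bar u)v_k=0$ exactly by Lemma~\ref{L5.1}; this eliminates the competing constraint on $\rho_k$ altogether. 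The near-active strip $\{\gamma-\tfrac1k<\|\bar u(t)\|_{L^1}<\gamma\}$ is handled simply by setting $v_k(t)=0$ there. If you want to salvage your additive idea, you would need an analogous mechanism guaranteeing $J'(\bar u)v_k=0$ and exact (not asymptotic) feasibility; otherwise the argument does not close.
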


\begin{proof}
	Let $v$ be an element of $C_{\bar u} \cap L^\infty(0,T;L^2(\Omega))$. We will prove that $J''(\bar u)v^2 \ge 0$. Later, we will remove the assumption $v \in L^\infty(0,T;L^2(\Omega))$. Set
	\[
	g(x,t) = \left\{\begin{array}{cl}\displaystyle\frac{v(x,t)}{|\bar u(x,t)|}&\text{if } x \not\in \Omega_{\bar u(t)}^0,\vspace{2mm}\\ 0 &\text{otherwise,}\end{array}\right.\quad \text{ and }\quad a(t) = \int_\Omega g(x,t)\bar u(x,t)\dx.
	\]
	From \eqref{E5.1} we infer
	\[
	j'(\bar u(t);v(t)) = a(t) + \int_{\Omega_{\bar u(t)}^0}|v(x,t)|\dx.
	\]
	For every integer $k \ge 1$ we put
	\begin{align*}
		&a_k(t) = \int_\Omega\proj_{[-k,+k]}(g(x,t))\bar u(x,t)\dx,\\
		&g_k(x,t) = \proj_{[-k,+k]}(g(x,t))|\bar u(x,t)| + \frac{a(t) - a_k(t)}{\gamma}\bar u(x,t),\vspace{2mm}\\
		&v_k(x,t) = \left\{\begin{array}{cl}0&\text{if } \gamma - \frac{1}{k} < \|\bar u(t)\|_{L^1(\Omega)} < \gamma,\vspace{2mm}\\g_k(x,t) + v(x,t)\chi_{\Omega_{\bar u(t)}^0}(x)&\text{if } \|\bar u(t)\|_{L^1(\Omega)} = \gamma,\\ v(x,t) & \text{otherwise,}\end{array}\right.
	\end{align*}
	where $\chi_{\Omega_{\bar u(t)}^0}(x)$ takes the value $1$ if $x \in \Omega_{\bar u(t)}^0$ and $0$ otherwise.
	
	Using that $|\proj_{[-k,+k]}(g(x,t))\bar u(x,t)| \le |v(x,t)|$ and the pointwise almost everywhere convergence $\proj_{[-k,+k]}(g(x,t))\bar u(x,t) \to g(x,t)\bar u(x,t)$  in $Q$, we deduce with Lebesgue's Theorem that $\lim_{k \to \infty}a_k(t) = a(t)$ for almost all $t \in (0,T)$. Therefore, we have that $v_k(x,t) \to v(x,t)$ for almost all $(x,t) \in Q$. Moreover, we have
	\[
	|g_k(x,t)| \le |v(x,t)| + \frac{2}{\gamma}\|v\|_{L^\infty(0,T;L^1(\Omega))}\|\bar u\|_{L^\infty(Q)}
	\]
	and
	\[
	|v_k(x,t)| \le |v(x,t)| + \frac{2}{\gamma}\|v\|_{L^\infty(0,T;L^1(\Omega))}\|\bar u\|_{L^\infty(Q)}
	\text{ for a.a. } (x,t) \in Q.
	\]
	Once again, with Lebesgue's Theorem we get $v_k \to v$ in $L^r(0,T;L^2(\Omega))$ for every $r < \infty$.
	
	Let us prove that $J'(\bar u)v_k = 0$. To this end, we apply Lemma \ref{L5.1}. Actually, we are going to prove that $v_k \in C_{\bar u}$. Given $t \in I_\gamma$, taking into account \eqref{E5.1} and the fact that $j(\bar u(t)) = \|\bar u(t)\|_{L^1(\Omega)} = \gamma$ we get
	\begin{align*}
		&j'(\bar u(t);v_k(t))\\
		&= \int_{\Omega^+_{\bar u(t)}}\proj_{[-k,+k]}(g(x,t))|\bar u(x,t)|\dx - \int_{\Omega^-_{\bar u(t)}}\proj_{[-k,+k]}(g(x,t))|\bar u(x,t)|\dx\\
		&+ \frac{a(t)-a_k(t)}{\gamma}\Big[\int_{\Omega^+_{\bar u(t)}}\bar u(x,t)\dx - \int_{\Omega^-_{\bar u(t)}}\bar u(x,t)\dx\Big] + \int_{\Omega_{\bar u(t)}^0}|v(x,t)|\dx\\
		&= \int_\Omega\proj_{[-k,+k]}(g(x,t))\bar u(x,t)\dx + \frac{a(t)-a_k(t)}{\gamma}j(\bar u(t)) + \int_{\Omega_{\bar u(t)}^0}|v(x,t)|\dx\\
		&= a(t) + \int_{\Omega_{\bar u(t)}^0}|v(x,t)|\dx = j'(\bar u(t),v(t)) \left\{\begin{array}{cl}= 0&\text{if } t \in I^+_\gamma,\\\le 0&\text{if } t \in I_\gamma\setminus I^+_\gamma,\end{array}\right.
	\end{align*}
	where we used that $v \in C_{\bar u}$ in the last step.
	
	In the case where $\|\bar u(t)\|_{L^1(\Omega)} < \gamma$, according to the definition of $v_k$, we have that $v_k(x,t)$ is equal to 0 or to $v(x,t)$. Since $v$ satisfies \eqref{E5.2} due to the fact that $v \in C_{\bar u}$, we deduce that $v_k$ also satisfies \eqref{E5.2}. Then, Lemma \ref{L5.1} implies that $J'(\bar u)v_k = 0$. Therefore, $v_k \in C_{\bar u}$ holds.
	
	Take $\rho_k > 0$ such that
	\[
	\rho_k\big(k + \frac{2}{\gamma}\|v\|_{L^\infty(0,T;L^1(\Omega))}\big) < \frac{1}{k\max\{1,\gamma\}}.
	\]
	Then, we have for each fixed $k$ and $\forall \rho \in (0,\rho_k)$
	\[
	\rho\big(|\proj_{[-k,+k]}(g(x,t))| + \big|\frac{|a(t) - a_k(t)|}{\gamma}\big|\big) \le \rho\big(k + \frac{2}{\gamma}\|v\|_{L^\infty(0,T;L^1(\Omega))}\big) < \frac{1}{k}.
	\]
	Using this estimate we have that $\|\bar u(t) + \rho v_k(t)\| \le \gamma$ if $j(\bar u(t)) = \gamma$ and $0 < \rho < \rho_k$:
	\begin{align*}
		&\|\bar u(t) + \rho v_k(t)\|_{L^1(\Omega)}\\
		&= \int_{\Omega\setminus\Omega^0_{\bar u(t)}}\Big|\bar u(t)[1 + \rho\big[\proj_{[-k,+k]}(g(x,t))\sign(\bar u(x,t)) + \frac{a(t) - a_k(t)}{\gamma}\big]\Big|\dx\\
		& + \rho\int_{\Omega^0_{\bar u(t)}}|v(x,t)|\dx\\
		&= \int_{\Omega\setminus\Omega^0_{\bar u(t)}}|\bar u(t)|[1 + \rho\big[\proj_{[-k,+k]}(g(x,t))\sign(\bar u(x,t)) + \frac{a(t) - a_k(t)}{\gamma}\big]\dx\\
		& + \rho\int_{\Omega^0_{\bar u(t)}}|v(x,t)|\dx\\
		&= \int_\Omega|\bar u(t)|\dx + \rho\left\{\int_\Omega\big[\proj_{[-k,+k]}(g(x,t))\bar u(x,t) + \frac{a(t) - a_k(t)}{\gamma}|\bar u(x,t)|\big]\dx\right.\\
		&\left. + \int_{\Omega^0_{\bar u(t)}}|v(x,t)|\dx\right\}\\
		&=\|\bar u(t)\|_{L^1(\Omega)} + \rho\left\{a(t) + \int_{\Omega^0_{\bar u(t)}}|v(x,t)|\dx\right\} = \gamma + \rho j'(\bar u(t);v(t)) \le \gamma.
	\end{align*}
	
	In the case $\gamma - \frac{1}{k} < \|\bar u(t)\|_{L^1(\Omega)} < \gamma$, we have that $v_k(t) = 0$ and, consequently
	\[
	\|\bar u(t) + \rho v_k(t)\|_{L^1(\Omega)} = \|\bar u(t)\|_{L^1(\Omega)} < \gamma.
	\]
	If $\|\bar u(t)\|_{L^1(\Omega)} < \gamma - \frac{1}{k}$, then we get
	\[
	\|\bar u(t) + \rho v_k(t)\|_{L^1(\Omega)} \le \gamma - \frac{1}{k} + \rho\|v\|_{L^\infty(0,T;L^1(\Omega))} < \gamma.
	\]
	
	Using the local optimality of $\bar u$, the fact that $\bar u + \rho v_k \in \uad$, $J'(\bar u)v_k = 0$, and making a Taylor expansion we get for every $\rho < \rho_k$ small enough
	\[
	0 \le J(\bar u + \rho v_k) - J(\bar u) = \rho J'(\bar u)v_k + \frac{\rho^2}{2}J''(\bar u + \theta\rho v_k)v_k^2 = \frac{\rho^2}{2}J''(\bar u + \theta\rho v_k)v_k^2.
	\]
	Dividing the above inequality by $\rho^2/2$ and making $\rho \to 0$ we obtain with Corollary \ref{C2.1} that $J''(\bar u)v_k^2 \ge 0$. Since $v_k \to v$ in $L^2(Q)$, we pass to the limit when $k \to \infty$ and conclude that $J''(\bar u)v^2 \ge 0$.
	
	Finally, we take $v \in C_{\bar u}$ arbitrary and for every $k \ge 1$ set
	\[
	v_k(x,t) = \frac{v(x,t)}{1 + \frac{1}{k}\|v(t)\|_{L^1(\Omega)}}.
	\]
	Then, we have
	\begin{align*}
		&J'(\bar u)v_k = \frac{1}{1 + \frac{1}{k}\|v(t)\|_{L^1(\Omega)}}J'(\bar u)v = 0\ \text{ and }\\
		&j'(\bar u(t);v_k(t)) = \frac{1}{1 + \frac{1}{k}\|v(t)\|_{L^1(\Omega)}}j'(\bar u(t);v(t)) \left\{\begin{array}{cl} = 0&\text{if } t \in I_\gamma^+,\\\le 0&\text{if } t \in I_\gamma\setminus I^+_\gamma.\end{array}\right.
	\end{align*}
	Therefore, $v_k \in C_{\bar u} \cap L^\infty(0,T;L^1(\Omega))$ and $v_k \to v$ in $L^2(Q)$ is satisfied. Hence, we get $J''(\bar u)v^2 = \lim_{k \to \infty}J''(\bar u)v_k^2 \ge 0$, which concludes the proof.
	\qed\end{proof}

\begin{theorem}
	Let $\bar u \in \uad \cap L^\infty(Q)$ satisfy the first order optimality conditions \eqref{E3.2}--\eqref{E3.5}. If $J''(\bar u)v^2 > 0$ $\forall v \in C_{\bar u} \setminus \{0\}$ holds, then for each $r \in \big(\frac{4}{4 - n},\infty]$ there exist $\delta > 0$ and $\varepsilon > 0$ such that
	\begin{equation}
		J(\bar u) + \frac{\delta}{2}\|u - \bar u\|^2_{L^2(Q)} \le J(u)\quad \forall u \in \uad \cap B_\varepsilon(\bar u),
		\label{E5.3}
	\end{equation}
	where $B_\varepsilon(\bar u) = \{u \in L^r(0,T;L^2(\Omega)) : \|u - \bar u\|_{L^r(0,T;L^2(\Omega))} \le \varepsilon\}$.
	\label{T5.2}
\end{theorem}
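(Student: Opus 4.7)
My plan is an argument by contradiction. Suppose \eqref{E5.3} fails; then for each $k$ there is $u_k \in \uad$ with $\|u_k-\bar u\|_{L^r(0,T;L^2(\Omega))}\le 1/k$ and $J(u_k) < J(\bar u) + \frac{1}{2k}\|u_k-\bar u\|^2_{L^2(Q)}$. I set $\rho_k = \|u_k-\bar u\|_{L^2(Q)}$ and $v_k = (u_k-\bar u)/\rho_k$, so $\|v_k\|_{L^2(Q)}=1$, and extract a subsequence with $v_k \rightharpoonup v$ in $L^2(Q)$. A Taylor expansion at $\bar u$, combined with $J'(\bar u)(u_k-\bar u)\ge 0$ (which follows from \eqref{E3.4}--\eqref{E3.5}), gives
\[
0 \le \rho_k J'(\bar u)v_k + \tfrac{\rho_k^2}{2}J''(\bar u + \theta_k\rho_k v_k)v_k^2 < \tfrac{\rho_k^2}{2k},
\]
so $J''(\bar u+\theta_k\rho_k v_k)v_k^2 < 1/k$; since $J''$ is a uniformly bounded bilinear form on $L^2(Q)$, dividing by $\rho_k$ also yields $J'(\bar u)v_k \to 0$, hence $J'(\bar u)v = 0$ by $L^2$-continuity of $J'(\bar u)$.

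The central step is to show $v \in C_{\bar u}$. For a.a.\ $t \in I_\gamma$, convexity of $j$ gives $\rho_k j'(\bar u(t);v_k(t)) \le j(u_k(t)) - j(\bar u(t)) \le 0$. Decomposing $j'$ via \eqref{E5.1} into a weakly continuous linear term in $v_k$ plus the weakly lower semicontinuous term $\int_{\Omega^0_{\bar u(t)}}|v_k|\dx$, and integrating over an arbitrary measurable $A \subset I_\gamma$, Fatou's lemma provides
\[
\int_A j'(\bar u(t);v(t))\dt \le \liminf_{k\to\infty} \int_A j'(\bar u(t);v_k(t))\dt \le 0,
\]
so $j'(\bar u(t);v(t)) \le 0$ a.e.\ on $I_\gamma$. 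On $I_\gamma^+$ the equality cannot be read off a lower semicontinuity argument; instead I would reuse the identity in the proof of Lemma \ref{L5.1}, which via $\bar\mu(x,t)=\|\bar\mu(t)\|_{L^\infty(\Omega)}\sign(\bar u(x,t))$ on $\supp(\bar u(t))$ and $|\bar\mu|\le\|\bar\mu(t)\|_{L^\infty(\Omega)}$ elsewhere (first and third statements of \eqref{E3.8}) yields
\[
0 = J'(\bar u)v \ge -\int_{I_\gamma^+}\|\bar\mu(t)\|_{L^\infty(\Omega)}\,j'(\bar u(t);v(t))\dt \ge 0,
\]
forcing $j'(\bar u(t);v(t))=0$ a.e.\ on $I_\gamma^+$. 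Thus $v \in C_{\bar u}$.

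With $v \in C_{\bar u}$, I pass to the liminf in $J''(\bar u+\theta_k\rho_k v_k)v_k^2 < 1/k$. Since $u_k \to \bar u$ in $L^r(0,T;L^2(\Omega))$, Theorem \ref{T2.1} yields $y_{u_k} \to \bar y$ in $L^\infty(Q)$, the adjoint converges to $\bar\varphi$ uniformly, and the linearized states $z_{v_k}$ associated with $\bar u+\theta_k\rho_k v_k$ converge strongly to $z_v$ in $L^2(Q)$ via the compact embedding $W(0,T) \hookrightarrow L^2(Q)$. Combining this with weak lower semicontinuity of $v\mapsto\kappa\|v\|^2_{L^2(Q)}$ in \eqref{E2.21} gives $J''(\bar u)v^2 \le \liminf_k J''(\bar u+\theta_k\rho_k v_k)v_k^2 \le 0$; for $v\ne 0$ this contradicts the hypothesis. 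The case $v = 0$ is treated separately: then $z_{v_k} \to 0$ strongly, so only $\kappa\|v_k\|^2_{L^2(Q)}=\kappa$ survives in the liminf, contradicting $J''(\ldots)v_k^2 < 1/k$ since $\kappa > 0$; this last contradiction essentially uses the Tikhonov term, cf.\ Remark \ref{R4.1}.

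The hardest step is showing $v \in C_{\bar u}$: the inequality $j'(\bar u(t);v(t))\le 0$ on $I_\gamma$ comes directly from weak lower semicontinuity, but the equality on $I_\gamma^+$ is not available by a limiting argument and has to be extracted indirectly from $J'(\bar u)v = 0$ together with the structural description of $\bar\mu$ from Corollary \ref{C3.1}, in the spirit of Lemma \ref{L5.1}.
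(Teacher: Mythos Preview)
Your proposal is correct and follows essentially the same route as the paper's proof: contradiction, normalization $v_k=(u_k-\bar u)/\rho_k$, weak limit $v$, verification that $v\in C_{\bar u}$, passage to the $\liminf$ in $J''$, and the final contradiction using $\kappa>0$ when $v=0$. Your treatment of the hard step---upgrading $j'(\bar u(t);v(t))\le 0$ on $I_\gamma$ to equality on $I_\gamma^+$ via $J'(\bar u)v=0$ and the structure of $\bar\mu$---matches the paper's argument (their Step~II, identity \eqref{E5.6}).

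One small correction: the left inequality ``$0\le$'' in your displayed Taylor expansion is not justified, since $\bar u$ is only assumed to satisfy the first-order conditions \eqref{E3.2}--\eqref{E3.5}, not to be a local minimizer; there is no reason for $J(u_k)-J(\bar u)\ge 0$. This does not affect your conclusions: $J''(\bar u+\theta_k\rho_k v_k)v_k^2<1/k$ follows already from $\rho_k J'(\bar u)v_k\ge 0$ and the upper bound, and $J'(\bar u)v_k\to 0$ follows by dividing by $\rho_k$ and using boundedness of $J''$ together with $J'(\bar u)v_k\ge 0$, exactly as you indicate. Just drop the spurious ``$0\le$''.
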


\begin{proof}
	We proceed by contradiction. If \eqref{E5.3} is false for every $\delta > 0$ and $\varepsilon > 0$, then for every integer $k \ge 1$ there exists an element $u_k \in \uad$ such that
	\begin{equation}
		\|u_k - \bar u\|_{L^r(0,T;L^2(\Omega))} < \frac{1}{k}\ \text{ and } \ J(u_k) < J(\bar u) + \frac{1}{2k}\|u_k - \bar u\|^2_{L^2(Q)}.
		\label{E5.4}
	\end{equation}
	Let us set $\rho_k = \|u_k - \bar u\|_{L^2(Q)}$ and $v_k = (u_k - \bar u)/\rho_k$. Then, we have  $\|v_k\|_{L^2(Q)} = 1$ and, taking a subsequence that we denote in the same way, we have $v_k \rightharpoonup v$ in $L^2(Q)$. We divide the proof in several steps.
	
	{\em Step I - $J'(\bar u)v = 0$.} From \eqref{E3.4} and \eqref{E3.5} we infer that $J'(\bar u)(u_k - \bar u) \ge 0$ for every $k \ge 1$. Therefore, $J'(\bar u)v_k \ge 0$ and passing to the limit we obtain $J'(\bar u)v \ge 0$. Now, using \eqref{E5.4} along with the mean value theorem we get for some $\theta_k \in (0,1)$
	\[
	J(u_k) - J(\bar u) = J'(\bar u + \theta_k(u_k - \bar u))(u_k - \bar u) < \frac{1}{2k}\|u_k - \bar u\|^2_{L^2(Q)}.
	\]
	Dividing this inequality by $\rho_k$ we obtain
	\[
	J'(\bar u + \theta_k(u_k - \bar u))v_k < \frac{1}{2k}\|u_k - \bar u\|_{L^2(Q)}.
	\]
	Then, passing to the limit when $k \to \infty$ it follows $J'(\bar u)v \le 0$.
	
	{\em Step II - $v \in C_{\bar u}$.} Since $\bar u(t) + \lambda v_k(t) = \bar u(t) + \frac{\lambda}{\rho_k}(u_k(t) - \bar u(t)) \in \uad$ for every $0 < \lambda < \rho_k$, we get for almost every $t \in I_\gamma$
	\begin{align*}
		j'(\bar u(t);v_k(t)) &= \lim_{\lambda \searrow 0}\frac{\|\bar u(t) + \lambda v_k(t)\|_{L^1(\Omega)} - \|\bar u(t)\|_{L^1(\Omega)}}{\lambda}\\
		&= \lim_{\lambda \searrow 0}\frac{\|\bar u(t) + \lambda v_k(t)\|_{L^1(\Omega)} - \gamma}{\lambda} \le 0.
	\end{align*}
	Take a measurable subset $J \subset I_\gamma$. Since the functional
	\[
	u \in L^2(Q) \longrightarrow \int_Jj'(\bar u(t);u(t))\dt \in \mathbb{R}
	\]
	is continuous and convex, recall \eqref{E5.1}, the weak convergence $v_k \rightharpoonup v$ in $L^2(Q)$ implies
	\[
	\int_Jj'(\bar u(t);v(t))\dt \le \liminf_{k \to \infty}\int_Jj'(\bar u(t);v_k(t))\dt \le 0.
	\]
	Since $J \subset I_\gamma$ is an arbitrary measurable set, we infer for almost all $t \in I_\gamma$
	\begin{equation}
		\int_{\Omega^+_{\bar u(t)}}v(t) \dx - \int_{\Omega^-_{\bar u(t)}}v(t) \dx +\int_{\Omega^0_{\bar u(t)}}|v(t)| \dx = j'(\bar u(t);v(t)) \le 0.
		\label{E5.5}
	\end{equation}
	Identities \eqref{E3.5} and $J'(\bar u)v = 0$, and \eqref{E3.8} imply
	\begin{align}
		&0 = \int_Q\bar\mu(x,t) v(x,t)\dx \dt = \int_{I^+_\gamma}\int_\Omega \bar\mu(x,t) v(x,t)\dx \dt\notag\\
		&=\int_{I^+_\gamma}\left\{\|\bar\mu(t)\|_{L^\infty(\Omega)}\Big[\int_{\Omega^+_{\bar u(t)}}v(t) \dx - \int_{\Omega^-_{\bar u(t)}}v(t) \dx\Big] + \int_{\Omega^0_{\bar u(t)}}\bar\mu(t)v(t) \dx\right\}\dt.
		\label{E5.6}
	\end{align}
	From \eqref{E5.5} we deduce
	\[
	\int_{I^+_\gamma}\left\{\|\bar\mu(t)\|_{L^\infty(\Omega)}\Big[\int_{\Omega^+_{\bar u(t)}}v(t) \dx - \int_{\Omega^-_{\bar u(t)}}v(t) \dx +\int_{\Omega^0_{\bar u(t)}}|v(t)| \dx\Big]\right\}\dt \le 0.
	\]
	The last two relations lead to
	\[
	\int_{I^+_\gamma}\left\{\int_{\Omega^0_{\bar u(t)}}\Big[\|\bar\mu(t)\|_{L^\infty(\Omega)}|v(t)| - \bar\mu(t)v(t)\Big]\dx\right\}\dt \le 0.
	\]
	This is possible if and only if $\|\bar\mu(t)\|_{L^\infty(\Omega)}|v(x,t)| = \bar\mu(x,t)v(x,t)$ for almost all $t \in I^+_\gamma$ and $x \in \Omega^0_{\bar u(t)}$. Inserting this identity in \eqref{E5.6} we get
	\[
	0 = \int_Q\bar\mu(x,t) v(x,t)\dx \dt = \int_{I^+_\gamma}\|\bar\mu(t)\|_{L^\infty(\Omega)}j'(\bar u(t);v(t))\dt.
	\]
	Finally, this identity and \eqref{E5.5} yield $j'(\bar u(t);v(t)) = 0$ for almost all $t \in I^+_\gamma$. Therefore, we conclude with {\em Step I} that $v \in C_{\bar u}$.
	
	{\em Step III - $J''(\bar u)v^2 \le 0$.} From \eqref{E5.4} and a Taylor expansion we infer
	\[
	\rho_kJ'(\bar u)v_k + \frac{\rho_k^2}{2}J''(\bar u + \theta_k\rho_kv_k)v_k^2 = J(u_k) - J(\bar u) < \frac{1}{2k}\|u_k - \bar u\|^2_{L^2(Q)}.
	\]
	Since $J'(\bar u)v_k = \frac{1}{\rho_k}J'(\bar u)(u_k - \bar u) \ge 0$, we deduce from the above inequality
	\begin{equation}
		J''(\bar u + \theta_k(u_k - \bar u))v_k^2 = J''(\bar u + \theta_k\rho_kv_k)v_k^2 < \frac{1}{k}.
		\label{E5.7}
	\end{equation}
	The strong convergence $\bar u + \theta_k(u_k - \bar u) \to \bar u$ in $L^r(0,T;L^2(\Omega))$ yields the uniform convergences $y_{\theta_k} \to \bar y$ and $\varphi_{\theta_k} \to \bar\varphi$ in $L^\infty(Q)$, where $y_{\theta_k}$ and $\varphi_{\theta_k}$ are the state and adjoint state associated with $\bar u + \theta_k(u_k - \bar u)$. This also implies that $z_{\theta_k,v_k} \to z_v$ strongly in $L^2(Q)$, where $z_v$ is the solution of \eqref{E2.20} for $y_u = \bar y$ and $z^2_{\theta_k,v_k}$ is the solution of \eqref{E2.20} with $v = v_k$ and $y_u = y_{\theta_k}$. Then, we can pass to the limit in \eqref{E5.7} when $k \to \infty$ and deduce that $J''(\bar u)v^2 \le 0$.
	
	{\em Step IV - Final contradiction.} Since $v \in C_{\bar u}$ and $J''(\bar u)v^2 \le 0$, according to the assumptions of the theorem, this is only possible if $v = 0$. Therefore, we have that $v_k \rightharpoonup 0$ in $L^2(Q)$ and, consequently, $z^2_{\theta_k,v_k} \to 0$ strongly in $L^2(Q)$.  Now, using that $\|v_k\|_{L^2(Q)} = 1$ and \eqref{E2.21}, we infer from \eqref{E5.7}
	\begin{align*}
		&0 \ge \liminf_{k \to \infty}J''(\bar u + \theta_k(u_k - \bar u))v_k^2\\
		& = \liminf_{k \to \infty}\int_Q\Big[\big(1 - \frac{\partial^2a}{\partial y^2}(x,t,y_{\theta_k})\varphi_{\theta_k}\big)z^2_{\theta_k,v_k} + \kappa v_k^2\Big]\dx\dt\\
		&\lim_{k \to \infty}\int_Q\big(1 - \frac{\partial^2a}{\partial y^2}(x,t,y_{\theta_k})\varphi_{\theta_k}\big)z^2_{\theta_k,v_k}\dx\dt + \kappa = \kappa,
	\end{align*}
	which contradicts our assumption $\kappa > 0$.
	\qed\end{proof}

The next theorem establishes that the sufficient condition for local optimality, $J''(\bar u)v^2 > 0$ for every $v \in C_{\bar u} \setminus \{0\}$, provides a useful tool for the numerical analysis of the control problem. Given $\tau > 0$ we define the extended cone
\begin{align*}
	&C^\tau_{\bar u} = \Big\{v \in L^2(Q) : |J'(\bar u)v| \le \tau\|v\|_{L^2(Q)} \text{ and }\\
	&\Big\{\begin{array}{cl}|j'(\bar u(t);v(t))| \le \tau\|v\|_{L^2(Q)}&\text{if } t \in I^+_\gamma,\\ j'(\bar u(t);v(t)) \le \tau\|v\|_{L^2(Q)}&\text{if } t \in I_\gamma \setminus I^+_\gamma,\end{array}\ \ \Big\}.
\end{align*}
\begin{theorem}
	Let $\bar u \in \uad$ satisfy the first order optimality conditions \eqref{E3.2}--\eqref{E3.5} and the second order condition $J''(\bar u)v^2 > 0$ $\forall v \in C_{\bar u} \setminus \{0\}$. Then, for every $r \in\big(\frac{4}{4-n},\infty]$ there exist strictly positive numbers $\varepsilon, \tau, \nu$ such that
	\begin{equation}
		J''(u)v^2 \ge \nu\|v\|^2_{L^2(Q)}\quad \forall v \in C^\tau_{\bar u}\ \text{ and } \ \forall u \in B_\varepsilon(\bar u),
		\label{E5.8}
	\end{equation}
	where $B_\varepsilon(\bar u)$ denotes the $L^r(0,T;L^2(\Omega))$ closed ball.
	\label{T5.3}
\end{theorem}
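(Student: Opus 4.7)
I would argue by contradiction in the same spirit as Theorem \ref{T5.2}. If \eqref{E5.8} is false, then for every $k\ge 1$ I can choose $u_k \in B_{1/k}(\bar u)$ and $v_k \in C^{1/k}_{\bar u}$ with $J''(u_k)v_k^2 < (1/k)\|v_k\|^2_{L^2(Q)}$. Renormalising $\tilde v_k := v_k/\|v_k\|_{L^2(Q)}$ and using the positive homogeneity of each defining condition of $C^\tau_{\bar u}$ and of the target estimate, I obtain $\|\tilde v_k\|_{L^2(Q)}=1$, $J''(u_k)\tilde v_k^2 < 1/k$, $|J'(\bar u)\tilde v_k|\le 1/k$, and the pointwise bounds $|j'(\bar u(t);\tilde v_k(t))|\le 1/k$ on $I^+_\gamma$, $j'(\bar u(t);\tilde v_k(t))\le 1/k$ on $I_\gamma\setminus I^+_\gamma$. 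Extract a subsequence with $\tilde v_k\rightharpoonup v$ in $L^2(Q)$.

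\textbf{Step 1: $v\in C_{\bar u}$.} Continuity of $J'(\bar u)$ on $L^2(Q)$ yields $J'(\bar u)v=0$ in the limit. For any measurable $E\subset I_\gamma$, the functional $w\mapsto\int_E j'(\bar u(t);w(t))\,dt$ is convex and Lipschitz on $L^2(Q)$ by \eqref{E5.1}, hence weakly lower semicontinuous; the upper bound $\int_E j'(\bar u(t);\tilde v_k(t))\,dt\le |E|/k\to 0$ then forces $j'(\bar u(t);v(t))\le 0$ for a.a.\ $t\in I_\gamma$. To upgrade this to equality on $I^+_\gamma$ I repeat Step II of the proof of Theorem \ref{T5.2}: combining $0=J'(\bar u)v=-\int_{I^+_\gamma}\!\int_\Omega \bar\mu v\,dx\,dt$, the decomposition of $\bar\mu$ from \eqref{E3.8}, the sign inequality $\|\bar\mu(t)\|_{L^\infty(\Omega)}|v|\ge \bar\mu v$, and the fact that $\|\bar\mu(t)\|_{L^\infty(\Omega)}>0$ on $I^+_\gamma$, I arrive at $j'(\bar u(t);v(t))=0$ a.a.\ $t\in I^+_\gamma$. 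Thus $v\in C_{\bar u}$.

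\textbf{Step 2: Limit of $J''(u_k)\tilde v_k^2$.} Continuity of the control-to-state map from $L^r(0,T;L^2(\Omega))$ into $L^\infty(Q)$ (Theorem \ref{T2.1}) together with the regularity stated after \eqref{E2.22} yields $y_{u_k}\to\bar y$ and $\varphi_{u_k}\to\bar\varphi$ uniformly on $Q$. A standard energy estimate in the linearised equation \eqref{E2.17} gives a uniform $W(0,T)$ bound on $z^{u_k}_{\tilde v_k}$, and the compact embedding $W(0,T)\hookrightarrow L^2(Q)$ combined with $\tilde v_k\rightharpoonup v$ allows me to identify the limit and conclude $z^{u_k}_{\tilde v_k}\to z^{\bar u}_v$ strongly in $L^2(Q)$. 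Using \eqref{E2.21}, the weak lower semicontinuity of $w\mapsto\kappa\|w\|^2_{L^2(Q)}$, and $\|\tilde v_k\|_{L^2(Q)}=1$,
\begin{align*}
\liminf_{k\to\infty} J''(u_k)\tilde v_k^2
&\ge \int_Q\Bigl(1-\tfrac{\partial^2 a}{\partial y^2}(\cdot,\cdot,\bar y)\bar\varphi\Bigr)(z^{\bar u}_v)^2\,dx\,dt + \kappa \\
&= J''(\bar u)v^2 + \kappa\bigl(1-\|v\|^2_{L^2(Q)}\bigr).
\end{align*}
Since by construction $\liminf_k J''(u_k)\tilde v_k^2 \le 0$, the case $v=0$ forces $0\ge\kappa$, contradicting $\kappa>0$; the case $v\neq 0$ puts $v\in C_{\bar u}\setminus\{0\}$, and $\|v\|_{L^2(Q)}\le 1$ by weak lower semicontinuity, so the hypothesis $J''(\bar u)v^2>0$ yields $\liminf\ge J''(\bar u)v^2>0$, again a contradiction.

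\textbf{Main obstacle.} The critical point is Step 1: the bound $|j'(\bar u(t);\tilde v_k(t))|\le 1/k$ on $I^+_\gamma$ cannot be passed to the weak limit directly because $|\,\cdot\,|$ destroys convexity, so $v$ does not automatically inherit the equality $j'(\bar u(t);v(t))=0$ pointwise from the $\tilde v_k$. As in Theorem \ref{T5.2}, the remedy is to extract from the upper bound only the convex inequality $j'(\bar u(t);v(t))\le 0$, and then to recover equality on $I^+_\gamma$ indirectly via the identity $J'(\bar u)v=0$ combined with the subdifferential structure of $\bar\mu$ encoded in \eqref{E3.8}.
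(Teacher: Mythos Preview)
Your argument is correct and follows essentially the same route as the paper's proof. The only noteworthy difference is organisational: the paper first establishes the estimate at the fixed point $\bar u$ (namely $J''(\bar u)v^2\ge 2\nu\|v\|^2_{L^2(Q)}$ for $v\in C^\tau_{\bar u}$) by contradiction, and then in a separate second step extends it to $u\in B_\varepsilon(\bar u)$ via the continuity bound $|[J''(u)-J''(\bar u)]v^2|\le \nu\|v\|^2_{L^2(Q)}$. You merge both steps into a single contradiction argument, letting $u_k\to\bar u$ and $\tau_k\to 0$ simultaneously, and you handle the dichotomy $v=0$ versus $v\neq 0$ through the clean identity $\liminf_k J''(u_k)\tilde v_k^2 \ge J''(\bar u)v^2+\kappa(1-\|v\|^2_{L^2(Q)})$ combined with $\|v\|_{L^2(Q)}\le 1$. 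This is a mild streamlining; the substance---in particular the passage showing $v\in C_{\bar u}$ by recovering the equality $j'(\bar u(t);v(t))=0$ on $I^+_\gamma$ from $J'(\bar u)v=0$ and the structure of $\bar\mu$ in \eqref{E3.8}---is identical to the paper.
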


\begin{proof}
	First we prove the existence of $\tau > 0$ and $\nu > 0$ such that
	\begin{equation}
		J''(\bar u)v^2 \ge 2\nu\|v\|^2_{L^2(Q)} \quad \forall v \in C^\tau_{\bar u}.
		\label{E5.9}
	\end{equation}
	We proceed by contradiction. If \eqref{E5.9} fails for all strictly positive numbers $\tau, \nu$, then for every integer $k \ge 1$ there exists a function $v_k \in C^{\frac{1}{k}}_{\bar u}$ such that $J''(\bar u)v_k^2 < \frac{1}{k}\|v_k\|^2_{L^2(Q)}$. Dividing $v_k$ by its $L^2(Q)$ norm and taking a subsequence we get
	\begin{align}
		&\|v_k\|_{L^2(Q)} = 1,\quad v_k \rightharpoonup v \text{ in } L^2(Q),\quad J''(\bar u)v_k^2 < \frac{1}{k},\label{E5.10}\\
		&|J'(\bar u)v_k| \le \frac{1}{k},\quad \left\{\begin{array}{cl}|j'(\bar u(t);v_k(t))| \le \frac{1}{k}&\text{if } t \in I^+_\gamma,\\ j'(\bar u(t);v_k(t)) \le \frac{1}{k}&\text{if } t \in I_\gamma \setminus I^+_\gamma.\end{array}\right.\label{E5.11}
	\end{align}
	We prove that $v \in C_{\bar u}$. First, from \eqref{E5.10} and \eqref{E5.11} we get
	\[
	|J'(\bar u)v| \le \liminf_{k \to \infty}|J'(\bar u)v_k| \le 0.
	\]
	Thus, we have $J'(\bar u)v = 0$. Let us set
	\[
	I = \{t \in I_\gamma : j'(\bar u(t);v(t)) > 0\}.
	\]
	Then, we obtain with \eqref{E5.10} and \eqref{E5.11}
	\[
	\int_Ij'(\bar u(t);v(t))\dt \le \liminf_{k \to \infty}\int_Ij'(\bar u(t);v_k(t))\dt \le 0.
	\]
	This is not possible unless $|I| = 0$. Hence, we have that $j'(\bar u(t);v(t)) \le 0$ for almost all $t \in I_\gamma$. Now, from the identity $J'(\bar u)v = 0$, \eqref{E5.1}, and \eqref{E3.8} it follows
	\begin{align*}
		&0 = \int_Q(\bar\varphi + \kappa\bar u)v\dx\dt = -\int_Q\bar\mu v\dx \dt\\
		&= -\int_{I^+_\gamma}\left[\int_{\Omega^+_{\bar u(t)}}\|\bar\mu(t)\|_{L^\infty(\Omega)}v\dx - \int_{\Omega^-_{\bar u(t)}}\|\bar\mu(t)\|_{L^\infty(\Omega)}v\dx + \int_{\Omega^0_{\bar u(t)}}\mu v\dx\right]\dt.
	\end{align*}
	This implies
	\begin{equation}
		\int_{I^+_\gamma}\left[\int_{\Omega^+_{\bar u(t)}}\|\bar\mu(t)\|_{L^\infty(\Omega)}v\dx - \int_{\Omega^-_{\bar u(t)}}\|\bar\mu(t)\|_{L^\infty(\Omega)}v\dx\right]\dt = - \int_{I^+_\gamma}\int_{\Omega^0_{\bar u(t)}}\mu v \dx \dt.
		\label{E5.12}
	\end{equation}
	Now we have
	\begin{align*}
		&\int_{I^+_\gamma}\|\bar\mu(t)\|_{L^\infty(\Omega)} j'(\bar u(t);v(t))\dt\\
		&=\int_{I^+_\gamma}\|\bar\mu(t)\|_{L^\infty(\Omega)}\left[\int_{\Omega^+_{\bar u(t)}}v\dx - \int_{\Omega^-_{\bar u(t)}}v\dx + \int_{\Omega^0_{\bar u(t)}}|v| \dx\right]\dt.
	\end{align*}
	From this identity and \eqref{E5.12} we infer
	\begin{align*}
		&\int_{I^+_\gamma}\|\bar\mu(t)\|_{L^\infty(\Omega)} j'(\bar u(t);v(t))\dt = \int_{I^+_\gamma}\int_{\Omega^0_{\bar u(t)}}\big[\|\bar\mu(t)\|_{L^\infty(\Omega)}|v| - \bar\mu(t)v\big]\dx\dt \ge 0.
	\end{align*}
	This inequality along with $j'(\bar u(t);v(t)) \le 0$ for $t \in I_\gamma$ implies that $j'(\bar u(t);v(t)) = 0$ for almost all $t \in I^+_\gamma$. We have proved that $v \in C_{\bar u}$. From \eqref{E5.10} we infer
	\[
	J''(\bar u)v^2 \le \liminf_{k \to \infty}J''(\bar u)v_k^2 \le 0.
	\]
	Since $\bar u$ satisfies the second order condition, the above inequality is only possible if $v = 0$. Therefore, we have that $v_k \rightharpoonup 0$ in $L^2(Q)$.  Using \eqref{E2.21} and the fact that $z_{v_k}\to 0$ strongly in $L^2(Q)$ this yields
	\[
	\kappa = \liminf_{k \to \infty} \kappa\|v_k\|^2_{L^2(Q)} = \liminf_{k \to \infty}J''(\bar u)v_k^2 \le 0,
	\]
	which is a contradiction. Therefore, \eqref{E5.9} holds.
	
	Let us conclude the proof showing that \eqref{E5.9} implies \eqref{E5.8}. Given $\rho > 0$ arbitrarily small, from Theorem \ref{T2.2} we deduce the existence of $\varepsilon > 0$ such that
	\[
	\|y_u - \bar y\|_{L^\infty(Q)} = \|G(u) - G(\bar u)\|_{L^\infty(Q)} < \rho\quad \forall u \in B_\varepsilon(\bar u).
	\]
	Using this estimate, we get from \eqref{E2.17} and \eqref{E2.22}, and taking a smaller $\varepsilon$ if necessary
	\[
	\|\varphi_u - \bar\varphi\|_{L^\infty(Q)} + \|z_{u,v} - z_v\|_{L^2(Q)}  < \rho \quad \forall u \in B_\varepsilon(\bar u)\ \text{ and }\ \forall v \in L^2(Q),
	\]
	where $z_{u,v} = G'(u)v$, $z_v = G'(\bar u)v$, and $\varphi_u$ and $\bar\varphi$ are the adjoint states corresponding to $u$ and $\bar u$, respectively. Therefore, selecting $\rho$ small enough we obtain with \eqref{E2.21} for some $\varepsilon > 0$
	\[
	|[J''(u) - J''(\bar u)]v^2| \le \nu\|v\|^2_{L^2(Q)} \quad \forall u \in B_\varepsilon(\bar u) \ \text{ and }\ \forall v \in L^2(Q).
	\]
	Combining this with \eqref{E5.9} we infer \eqref{E5.8}.
	\qed\end{proof}

\section{Stability of the optimal controls with respect to $\gamma$}
\label{S6}

The aim of this section is to prove some stability of the local or global solutions of \Pb with respect to $\gamma$. For every $\gamma > 0$ we consider the control problems
\[
\Pbg \quad  \inf_{u \in \uadg \cap L^\infty(Q)} J(u),
\]
where
\[
\uadg = \{u \in L^\infty(0,T;L^1(\Omega)) : \|u(t)\|_{L^1(\Omega)} \le \gamma \text{ for a.a. } t \in (0,T)\}.
\]
First, we prove some continuity of the solutions of \Pbg with respect to $\gamma$.

\begin{theorem}
	Let $\{\gamma_k\}_{k = 1}^\infty \subset (0,\infty)$ be a sequence converging to some $\gamma > 0$. For every $k$ let $u_{\gamma_k}$ be a global minimizer of the problem \Pbgk. Then, the sequence $\{u_{\gamma_k}\}_{k = 1}^\infty$ is bounded in $L^\infty(Q)$. Moreover, if $u_\gamma$ is a weak$^*$ limit in $L^\infty(Q)$ of a subsequence of $\{u_{\gamma_k}\}_{k = 1}^\infty$, then $u_\gamma$ is a global minimizer of \Pbg and the convergence is strong in $L^p(Q)$ for every $p < \infty$. Reciprocally, for every strict local minimizer $u_\gamma$ of \Pbg in the $L^r(0,T;L^2(\Omega))$ sense with $\frac{4}{4-n} < r < \infty$, there exists a sequence $\{u_{\gamma_k}\}_{k = 1}^\infty$ such that $u_{\gamma_k}$ is a $L^r(0,T;L^2(\Omega))$ local minimizer of \Pbgk and $u_{\gamma_k} \to u_\gamma$ strongly in $L^p(Q)$ for every $p < \infty$.
	\label{T6.1}
\end{theorem}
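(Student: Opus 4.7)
The argument naturally splits into three steps. First, the uniform $L^\infty(Q)$ bound on $\{u_{\gamma_k}\}$ is immediate from Theorem~\ref{T3.2}: the constant $K_\infty$ there depends only on the data and not on $\gamma$, so each global minimizer $u_{\gamma_k}$ of \Pbgk\ satisfies $\|u_{\gamma_k}\|_{L^\infty(Q)}\le K_\infty$, and Banach--Alaoglu furnishes weak$^*$ accumulation points in $L^\infty(Q)$.

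For the direct statement, let $u_\gamma$ be such a weak$^*$ limit along a subsequence (not relabelled). Admissibility $u_\gamma\in\uadg$ will be read from the weak lower semicontinuity of the convex continuous map $u\mapsto \int_E\int_\Omega|u|\,\dx\,\dt$ on $L^2(Q)$: for any measurable $E\subset(0,T)$,
$$
\int_E\|u_\gamma(t)\|_{L^1(\Omega)}\,\dt \le \liminf_{k\to\infty}\int_E\|u_{\gamma_k}(t)\|_{L^1(\Omega)}\,\dt \le \liminf_{k\to\infty}\gamma_k|E|=\gamma|E|,
$$
so $\|u_\gamma(t)\|_{L^1(\Omega)}\le\gamma$ a.e. To pass to the limit in the state equation I use Theorem~\ref{T2.1} to get $y_{u_{\gamma_k}}$ bounded in $L^\infty(Q)\cap W(0,T)$, extract a strongly $L^2(Q)$ convergent subsequence via the compact embedding, and identify the limit as $y_{u_\gamma}$ through dominated convergence on $a(x,t,\cdot)$. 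To show $u_\gamma$ is a global minimizer, I test against arbitrary $u\in\uadg\cap L^\infty(Q)$ with the rescaled competitors $u_k:=\min(1,\gamma_k/\gamma)\,u\in\uadgk\cap L^\infty(Q)$, which converge to $u$ in $L^\infty(Q)$. The optimality of $u_{\gamma_k}$ and continuity of $J$ in $L^\infty(Q)$ yield $\limsup_k J(u_{\gamma_k})\le J(u)$, while weak lower semicontinuity of $J$ (the tracking term is in fact continuous by the strong $L^2$ convergence of states) gives $J(u_\gamma)\le \liminf_k J(u_{\gamma_k})$. Setting $u=u_\gamma$ forces $\lim_k J(u_{\gamma_k})=J(u_\gamma)$ and therefore $\|u_{\gamma_k}\|_{L^2(Q)}\to\|u_\gamma\|_{L^2(Q)}$; combined with weak convergence this upgrades to strong $L^2(Q)$ convergence, and the uniform $L^\infty$ bound promotes it to strong $L^p(Q)$ for every $p<\infty$ by an elementary interpolation of the form $\|\cdot\|_{L^p}^p\le \|\cdot\|_{L^\infty}^{p-2}\|\cdot\|_{L^2}^2$.

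For the converse, let $u_\gamma$ be a strict local minimizer with strictness radius $\varepsilon$ in $L^r(0,T;L^2(\Omega))$, and introduce the localized auxiliary problems
$$
\min\bigl\{J(u):u\in\uadgk\cap \bar B_{\varepsilon/2}(u_\gamma)\cap L^\infty(Q)\bigr\},
$$
where $\bar B_{\varepsilon/2}(u_\gamma)$ is the closed $L^r(0,T;L^2(\Omega))$ ball. Existence of a minimizer $u_{\gamma_k}$ is obtained by the direct method in the reflexive space $L^r(0,T;L^2(\Omega))$: the feasible set is nonempty (it contains $\min(1,\gamma_k/\gamma)u_\gamma$ for all $k$), bounded, convex and weakly closed; the corresponding states are uniformly bounded in $L^\infty(Q)\cap W(0,T)$ by Theorem~\ref{T2.1}; and $J$ is weakly lower semicontinuous. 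Repeating the direct-part arguments, any weak accumulation point $u^*$ of $\{u_{\gamma_k}\}$ lies in $\uadg\cap\bar B_{\varepsilon/2}(u_\gamma)\subset \uadg\cap B_\varepsilon(u_\gamma)$ and satisfies $J(u^*)\le J(u_\gamma)$, so strictness forces $u^*=u_\gamma$. The whole sequence therefore converges, strongly in $L^2(Q)$ and hence strongly in $L^r(0,T;L^2(\Omega))$ by interpolation against the uniform $L^\infty$ bound. Once $\|u_{\gamma_k}-u_\gamma\|_{L^r(0,T;L^2(\Omega))}<\varepsilon/2$ the localization constraint is inactive, so $u_{\gamma_k}$ is an honest $L^r(0,T;L^2(\Omega))$-local minimizer of \Pbgk.

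The two main subtleties I anticipate are: (i) transferring the pointwise-in-time $L^1(\Omega)$ admissibility to the weak limit, handled by the convex l.s.c.\ testing against arbitrary measurable time sets; and (ii) in the converse direction, ruling out boundary effects from the localization, which is why I localize on $\bar B_{\varepsilon/2}$ rather than $\bar B_\varepsilon$, so that $u^*$ lies strictly inside the strict-minimum neighborhood. The scaling $\min(1,\gamma_k/\gamma)$, which converts $\uadg$-admissible controls into $\uadgk$-admissible ones while preserving $L^\infty(Q)$ convergence, is the pivotal common tool in both directions.
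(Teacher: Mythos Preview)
Your direct part is correct and essentially coincides with the paper's argument; the only cosmetic difference is that you obtain admissibility of the weak$^*$ limit by testing $u\mapsto\int_E\!\int_\Omega|u|$ against arbitrary measurable $E\subset(0,T)$, whereas the paper rescales $u_{\gamma_k}$ into $\uadg$ and invokes closedness of $\uadg\cap L^2(Q)$. Both are fine.

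The converse part has a genuine gap. You assert a ``uniform $L^\infty$ bound'' on the localized minimizers $u_{\gamma_k}$ and then use it twice: to interpolate strong $L^2(Q)$ convergence up to $L^p(Q)$ for all $p<\infty$, and (implicitly) to upgrade to strong $L^r(0,T;L^2(\Omega))$ convergence so that the ball constraint becomes inactive. But you never establish this bound. Theorem~\ref{T3.2} is not available here: it concerns \emph{global} minimizers of \Pbgk, while your $u_{\gamma_k}$ are minimizers of an auxiliary problem with an additional ball constraint, for which neither Theorem~\ref{T3.2} nor the optimality system \eqref{E3.2}--\eqref{E3.5} holds a priori. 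A related issue is that your auxiliary feasible set, written as $\uadgk\cap\bar B_{\varepsilon/2}(u_\gamma)\cap L^\infty(Q)$, is not weakly closed in $L^r(0,T;L^2(\Omega))$, so the direct method as stated does not produce a minimizer; one should drop the $L^\infty(Q)$ intersection (as the paper does) and then \emph{prove} the $L^\infty$ bound afterwards.

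The paper fills exactly this gap: for the auxiliary problem $(PB_{\gamma_k})$ it writes the first-order condition as $u_{\gamma_k}=\proj_S(-\tfrac{1}{\kappa}\varphi_{\gamma_k})$ with $S=\uadgk\cap B_\varepsilon(u_\gamma)$, and then, by a direct construction (defining the set $Q_0$ where $|u_{\gamma_k}|$ is too large and exhibiting a better competitor in $S$), shows
\[
\|u_{\gamma_k}\|_{L^\infty(Q)}\le 2\Big(\tfrac{1}{\kappa}\|\varphi_{\gamma_k}\|_{L^\infty(Q)}+\|u_\gamma\|_{L^\infty(Q)}\Big).
\]
The right-hand side is bounded because $\{u_{\gamma_k}\}$ is bounded in $L^r(0,T;L^2(\Omega))$, hence $\{y_{\gamma_k}\}$ and then $\{\varphi_{\gamma_k}\}$ are bounded in $L^\infty(Q)$ by Theorem~\ref{T2.1} and the linear adjoint estimates. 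This is the nontrivial step your plan is missing; without it neither the $L^p(Q)$ convergence for $p>2$ nor the deactivation of the localization constraint follows.
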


\begin{proof}
	The boundedness of $\{u_{\gamma_k}\}_{k = 1}^\infty$ in $L^\infty(Q)$ follows from Theorem \ref{T3.2}. Therefore, we can take subsequences converging weakly$^*$ in $L^\infty(Q)$. Let us take one of these subsequences, that we denote in the same form, such that $u_{\gamma_k} \stackrel{*}{\rightharpoonup} \hat u$ in $L^\infty(Q)$. Let $u_\gamma$ be a solution of \Pbg. For every $k$ we define
	\begin{equation}
		u_k = \left\{\begin{array}{cl} u_\gamma&\text{if } \gamma \le \gamma_k,\vspace{2mm}\\\frac{\gamma_k}{\gamma}u_\gamma&\text{if } \gamma > \gamma_k,\end{array}\right.\quad \text{ and }\quad \hat u_k = \left\{\begin{array}{cl} u_{\gamma_k}&\text{if } \gamma_k \le \gamma,\vspace{2mm}\\\frac{\gamma}{\gamma_k}u_{\gamma_k}&\text{if } \gamma_k > \gamma.\end{array}\right.
		\label{E6.1}
	\end{equation}
	Then, it is immediate that $u_k \to u_\gamma$ and $\hat u_k \stackrel{*}{\rightharpoonup} \hat u$ in $L^\infty(Q)$, $\{\hat u_k\}_{k = 1}^\infty \subset \uadg$ and $u_k \in \uadgk \cap \uadg$ for every $k$. Since $\uadg \cap L^2(Q)$ is a closed and convex subset of $L^2(Q)$ and $\hat u_k \rightharpoonup \hat u$ in $L^2(Q)$, we deduce that $\hat u \in \uadg$. With the compactness of the embedding $W(0,T) \subset L^2(Q)$ we can easily prove that $y_{\hat u_k} \to y_{\hat u}$ in $L^2(Q)$. Using these convergences and the optimality of $u_{\gamma_k}$ and $u_\gamma$ we get
	\[
	J(u_\gamma) \le J(\hat u) \le \liminf_{k \to \infty}J(u_{\gamma_k}) \le \limsup_{k \to \infty}J(u_{\gamma_k}) \le \limsup_{k \to \infty} J(u_k) = J(u_\gamma).
	\]
	This implies that $J(u_\gamma) = J(\hat u) = \lim_{k \to \infty}J(u_{\gamma_k})$. This identity proves that $\hat u$ is a solution of \Pbg. Moreover, the convergence $y_{u_{\gamma_k}} \to y_{u_\gamma}$ in $L^2(Q)$ leads to $\lim_{k \to \infty}\|u_{\gamma_k}\|_{L^2(Q)} = \|\hat u\|_{L^2(Q)}$. From this fact and the weak convergence $u_{\gamma_k} \rightharpoonup u_\gamma$ in $L^2(Q)$, we obtain that $u_{\gamma_k} \to \hat u$ in $L^2(Q)$. This along with the boundedness of $\{u_{\gamma_k}\}_{k = 1}^\infty$ in $L^\infty(Q)$ implies the strong convergence in $L^p(Q)$ for every $p < \infty$.
	
	Let us prove the second part of the theorem. Let $u_\gamma$ be an $L^r(0,T;L^2(\Omega))$ strict local minimizer to \Pbg. This means that there exists $\varepsilon > 0$ such that
	\[
	J(u_\gamma) < J(u) \quad \forall u \in \uadg \cap B_\varepsilon(u_\gamma) \text{ with } u \neq u_\gamma,
	\]
	where $B_\varepsilon(u_\gamma)$ is the closed ball in $L^r(0,T;L^2(\Omega))$ of radius $\varepsilon$ and center $u_\gamma$. Now, we consider the problems
	\[
	(PB_\gamma) \ \ \min_{u \in \uadg \cap B_\varepsilon(u_\gamma)}J(u) \qquad \text{ and }\qquad (PB_{\gamma_k})\ \ \min_{u \in \uadgk \cap B_\varepsilon(u_\gamma)}J(u)
	\]
	It is immediate that $u_\gamma$ is the unique solution of $(PB_\gamma)$. Observe that the controls $u_k$ defined in \eqref{E6.1} are elements of $\uadgk \cap B_\varepsilon(u_\gamma)$ for all $k$ large enough. Hence,  $\uadgk \cap B_\varepsilon(u_\gamma)$ is non-empty, closed, convex, and bounded in $L^r(0,T;L^2(\Omega))$. Therefore, problem $(PB_{\gamma_k})$ has at least one solution $u_{\gamma_k}$. Let us prove that $u_{\gamma_k} \to u_\gamma$ in $L^p(Q)$ for every $p < \infty$. Denote $y_{\gamma_k}$ and $\varphi_{\gamma_k}$ the state and adjoint state associated with $u_{\gamma_k}$. Since $\{u_{\gamma_k}\}_{k = 1}^\infty$ is bounded in $L^r(0,T;L^2(\Omega))$ we infer from Theorem \ref{T2.1} the boundedness of $\{y_{\gamma_k}\}_{k = 1}^\infty$ in $L^\infty(Q)$. Hence, from the adjoint state equation and the classical estimates for linear equations we deduce that $\{\varphi_{\gamma_k}\}_{k = 1}^\infty$ is also bounded in $L^\infty(Q)$. Due to the optimality of $u_{\gamma_k}$ for $(PB_{\gamma_k})$ we obtain
	\[
	\int_Q(\varphi_{\gamma_k} + \kappa u_{\gamma_k})(u - u_{\gamma_k})\dx \dt = J'(u_{\gamma_k})(u - u_{\gamma_k}) \ge 0 \ \ \forall u \in \uadgk \cap B_\varepsilon(u_\gamma).
	\]
	Setting $S = \uadgk \cap B_\varepsilon(u_\gamma)$ we get from the above inequalities
	\[
	u_{\gamma_k} = \proj_S\big(-\frac{1}{\kappa}\varphi_{\gamma_k}\big),
	\]
	where $\proj_S$ denotes the $L^2(Q)$ projection on $S$. Let us prove that
	\begin{equation}
		\|u_{\gamma_k}\|_{L^\infty(Q)} \le 2\big(\frac{1}{\kappa}\|\varphi_{\gamma_k}\|_{L^\infty(Q)} + \|u_\gamma\|_{L^\infty(Q)}\big).
		\label{E6.2}
	\end{equation}
	For this purpose we define
	\[
	Q_0 = \big\{(x,t) \in Q : |u_{\gamma_k}(x,t)| > 2\big(\frac{1}{\kappa}|\varphi_{\gamma_k}(x,t)| + |u_\gamma(x,t)|\big)\big\}.
	\]
	Put
	\[
	u(x,t) = \left\{\begin{array}{cl}-\frac{1}{\kappa}\varphi_{\gamma_k}(x,t) + u_\gamma(x,t) &\text{if } (x,t) \in Q_0,\\u_{\gamma_k}(x,t)&\text{otherwise.}\end{array}\right.
	\]
	Then, it is obvious that
	\begin{align*}
		&\|u(t)\|_{L^1(\Omega)} \le \|u_{\gamma_k}(t)\|_{L^1(\Omega)} \le \gamma_k,\\
		&\|u - u_\gamma\|_{L^r(0,T;L^2(\Omega))} \le \|u_{\gamma_k} - u_\gamma\|_{L^r(0,T;L^2(\Omega))} \le \varepsilon,\\
		&\big\|u + \frac{1}{\kappa}\varphi_{\gamma_k}\big\|_{L^2(Q)} < \big\|u_{\gamma_k} + \frac{1}{\kappa}\varphi_{\gamma_k}\big\|_{L^2(Q)}\ \text{ if } |Q_0| \neq 0,
	\end{align*}
	The first two inequalities show that $u \in S$ and, consequently, the third one contradicts the fact that $u_{\gamma_k}$ is the $L^2(Q)$ projection of $-\frac{1}{\kappa}\varphi_{\gamma_k}$ unless $|Q_0| = 0$. Now, the boundedness of $\{\varphi_{\gamma_k}\}_{k = 1}^\infty$ in $L^\infty(Q)$ and \eqref{E6.2} imply the boundedness of $\{u_{\gamma_k}\}_{k = 1}^\infty$. Therefore, there exists a subsequence, denoted in the same way, such that $u_{\gamma_k} \stackrel{*}{\rightharpoonup} \hat u$ in $L^\infty(Q)$. Using the functions $\{\hat u_k\}_{k = 1}^\infty$ defined in \eqref{E6.1} and arguing as above, we deduce that $\hat u \in \uadg$. Moreover, is is also immediate that $\hat u \in B_\varepsilon(u_\gamma)$. Let us consider the functions $\{u_k\}_{k = 1}^\infty$ defined in \eqref{E6.1}. Since
	\[
	\|u_k - u_\gamma\|_{L^\infty(Q)} = \left\{\begin{array}{cl}0 &\text{if } \gamma \le \gamma_k,\\ \frac{\gamma - \gamma_k}{\gamma}\|u_\gamma\|_{L^\infty(Q)}&\text{otherwise,}\end{array}\right.
	\]
	we have that $u_k \to u_\gamma$ in $L^\infty(Q)$ as $k \to \infty$ and $u_k \in \uadgk \cap B_\varepsilon(u_\gamma)$ for every $k$ large enough. Then, using the optimality of $u_\gamma$ and $u_{\gamma_k}$, and the fact that $u_k$ and $\hat u$ are feasible controls for $(PB_{\gamma_k})$ and $(PB_\gamma)$, respectively, we infer
	\[
	J(u_\gamma) \le J(\hat u) \le \liminf_{k \to \infty}J(u_{\gamma_k}) \le \limsup_{k \to \infty}J(u_{\gamma_k}) \le \limsup_{k \to \infty} J(u_k) = J(u_\gamma).
	\]
	This implies that $J(u_\gamma) = J(\hat u)$ and, hence, $\hat u$ is also a solution of $(PB_\gamma)$. Due to the uniqueness of solution of $(PB_\gamma)$ we conclude that $u_\gamma = \hat u$. The strong convergence $u_{\gamma_k} \to u_\gamma$ in $L^p(Q)$ follows as above. We have proved that every subsequence converge to $u_\gamma$, then the whole sequence does. In particular, the convergence $u_{\gamma_k} \to u_\gamma$ in $L^r(0,T;L^2(\Omega))$ implies that $u_{\gamma_k}$ is in the interior of the ball $B_\varepsilon(u_\gamma)$ for all $k$ sufficiently large. Hence, $u_{\gamma_k}$ is an $L^r(0,T;L^2(\Omega))$ local minimizer of $(PB_{\gamma_k})$.
	\qed\end{proof}

\begin{remark}
	Given an $L^r(0,T;L^2(\Omega))$ strict local minimizer of \Pbg, from the above theorem we deduce the existence of a family $\{u_{\gamma'}\}_{\gamma' > 0}$ of $L^r(0,T;L^2(\Omega))$ local minimizers of problems \Pbgp such that $u_{\gamma'} \to u_\gamma$ in $L^p(Q)$ as $\gamma' \to \gamma$ for every $p < \infty$. Looking at the definition of the elements $u_{\gamma_k}$ in the previous proof  we have that
	\begin{equation}
		J(u_{\gamma'}) \le J(u) \ \ \forall u \in \uadgp \cap B_\varepsilon(u_\gamma)\quad \text{and}\quad J(u_\gamma) \le J(u) \ \ \forall u \in \uadg \cap B_\varepsilon(u_\gamma).
		\label{E6.3}
	\end{equation}
	\label{R6.1}
\end{remark}

\begin{theorem}
	Let $\{u_{\gamma'}\}_{\gamma'}$ be a family of local minimizers of problems \Pbgp such that $u_{\gamma'} \to u_\gamma$ in $L^r(0,T;L^2(\Omega))$ as $\gamma' \to \gamma$ with $u_\gamma$ a local minimizer of \Pbg satisfying \eqref{E5.3}. We also assume that \eqref{E6.3} holds. Then, there exists a constant $L$ such that
	\begin{equation}
		\|u_{\gamma'} - u_\gamma\|_{L^2(Q)} \le L|\gamma' - \gamma|^{\frac{1}{2}}.
		\label{E6.4}
	\end{equation}
	\label{T6.2}
\end{theorem}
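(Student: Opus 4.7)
The plan is to combine the quadratic growth condition \eqref{E5.3} around $u_\gamma$ with the local optimality \eqref{E6.3} of $u_{\gamma'}$, using admissible comparison controls obtained by a simple radial rescaling. I define
\[
\tilde u_{\gamma'} = \min\!\bigl(1,\tfrac{\gamma}{\gamma'}\bigr)u_{\gamma'}, \qquad \tilde u_\gamma = \min\!\bigl(1,\tfrac{\gamma'}{\gamma}\bigr)u_\gamma,
\]
so that $\tilde u_{\gamma'}\in\uadg$ and $\tilde u_\gamma\in\uadgp$. Since $\|\tilde u_{\gamma'} - u_{\gamma'}\|_{L^r(0,T;L^2(\Omega))}$ and $\|\tilde u_\gamma - u_\gamma\|_{L^r(0,T;L^2(\Omega))}$ are $O(|\gamma'-\gamma|)$ and $u_{\gamma'}\to u_\gamma$ in $L^r(0,T;L^2(\Omega))$, both rescalings lie in $B_\varepsilon(u_\gamma)$ once $|\gamma'-\gamma|$ is small enough.

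I then split into two cases. If $\gamma'\ge\gamma$, then $\tilde u_\gamma = u_\gamma \in\uadg\subset\uadgp$, so \eqref{E6.3} yields $J(u_{\gamma'})\le J(u_\gamma)$; combining this with \eqref{E5.3} applied at $\tilde u_{\gamma'}\in\uadg\cap B_\varepsilon(u_\gamma)$ gives
\[
\tfrac{\delta}{2}\|\tilde u_{\gamma'}-u_\gamma\|_{L^2(Q)}^2 \le J(\tilde u_{\gamma'})-J(u_\gamma) \le J(\tilde u_{\gamma'})-J(u_{\gamma'}).
\]
If $\gamma'<\gamma$, then $u_{\gamma'}\in\uadgp\subset\uadg$ and $\tilde u_\gamma\in\uadgp\cap B_\varepsilon(u_\gamma)$; applying \eqref{E5.3} at $u_{\gamma'}$ and using \eqref{E6.3} to get $J(u_{\gamma'})\le J(\tilde u_\gamma)$ yields
\[
\tfrac{\delta}{2}\|u_{\gamma'}-u_\gamma\|_{L^2(Q)}^2 \le J(u_{\gamma'})-J(u_\gamma) \le J(\tilde u_\gamma)-J(u_\gamma).
\]

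In either case the right-hand side has the form $J(\tilde u)-J(u)$ with $\tilde u - u = (\min(1,\gamma/\gamma')-1)u$ or the analogous expression, so $\|\tilde u-u\|_{L^2(Q)} \le |\gamma'-\gamma|\,\|u\|_{L^2(Q)}/\min(\gamma,\gamma')$. The $L^r$-convergence $u_{\gamma'}\to u_\gamma$ together with Theorem \ref{T2.1} and Corollary \ref{C2.1} gives uniform $L^\infty(Q)$ bounds on the associated states and adjoint states along the full segments $[u,\tilde u]$, so the mean value theorem provides a uniform Lipschitz estimate
\[
|J(\tilde u)-J(u)| \le C'\,\|\tilde u-u\|_{L^2(Q)} \le C''|\gamma'-\gamma|.
\]
Inserting this into the two displays above gives $\|\tilde u_{\gamma'}-u_\gamma\|_{L^2(Q)}\le L'|\gamma'-\gamma|^{1/2}$ in the first case and $\|u_{\gamma'}-u_\gamma\|_{L^2(Q)}\le L'|\gamma'-\gamma|^{1/2}$ in the second, and a triangle inequality $\|u_{\gamma'}-u_\gamma\|_{L^2(Q)}\le \|u_{\gamma'}-\tilde u_{\gamma'}\|_{L^2(Q)}+\|\tilde u_{\gamma'}-u_\gamma\|_{L^2(Q)}$ absorbs the leftover $O(|\gamma'-\gamma|)$ term into an enlarged constant, producing \eqref{E6.4}.

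The delicate point is ensuring that the Lipschitz constant for $J$ stays uniform in $\gamma'$ close to $\gamma$; this rests on the uniform $L^\infty(Q)$ bounds for the states and adjoint states on the segments, which in turn follow from the assumed $L^r$-convergence and the regularity theory of Section \ref{S2}. Once that uniformity is in hand, the estimate is a routine combination of quadratic growth and the radial-rescaling construction adapted to the $L^1$-in-space constraint.
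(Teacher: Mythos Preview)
Your proof is correct and is essentially the same as the paper's. Your rescalings $\tilde u_{\gamma'}$ and $\tilde u_\gamma$ coincide with the paper's $\hat v_{\gamma'}$ and $\hat u_{\gamma'}$ defined in \eqref{E6.5}, the case split $\gamma'\ge\gamma$ versus $\gamma'<\gamma$ is handled identically (quadratic growth \eqref{E5.3} plus local optimality \eqref{E6.3} plus the mean value theorem), and the final triangle-inequality step matches the paper's closing lines.
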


\begin{proof}
	The first part of the theorem follows from Remark \ref{R6.1}. We only have to prove \eqref{E6.4}. For every $\gamma'$ we define
	\begin{equation}
		\hat u_{\gamma'} = \left\{\begin{array}{cl} u_\gamma&\text{if } \gamma < \gamma',\vspace{2mm}\\\frac{\gamma'}{\gamma}u_\gamma&\text{if } \gamma > \gamma',\end{array}\right.\quad \text{ and }\quad \hat v_{\gamma'} = \left\{\begin{array}{cl} u_{\gamma'}&\text{if } \gamma' < \gamma,\vspace{2mm}\\\frac{\gamma}{\gamma'}u_{\gamma'}&\text{if } \gamma' > \gamma.\end{array}\right.
		\label{E6.5}
	\end{equation}
	Then we have
	\begin{equation}
		\hat u_{\gamma'}, \hat v_{\gamma'} \in \uadg \cap \uadgp,\ \ \hat u_{\gamma'} \to u_\gamma \text{ in } L^\infty(Q) \text{ and } \hat v_{\gamma'} \to u_\gamma \text{ in } L^r(0,T;L^2(\Omega)).
		\label{E6.6}
	\end{equation}
	From here we infer that $v_{\gamma'} \in \uadg \cap B_\varepsilon(u_\gamma)$ for $\gamma'$ close enough to $\gamma$ with $B_\varepsilon(u_\gamma)$ defined in \eqref{E5.3}. Therefore, we get
	\begin{equation}
		\frac{\delta}{2}\|\hat v_{\gamma'} - u_\gamma\|^2_{L^2(Q)} \le J(\hat v_{\gamma'}) - J(u_\gamma).
		\label{E6.7}
	\end{equation}
	In the case $\gamma' < \gamma$, using \eqref{E6.7}, the optimality of $u_{\gamma'}$, and the definition of $\hat v_{\gamma'}$ we obtain with the mean value theorem
	\begin{align*}
		&\|u_{\gamma'} - u_\gamma\|^2_{L^2(Q)} \le \frac{2}{\delta}\big[(J(u_{\gamma'}) - J(\hat u_{\gamma'})) + (J(\hat u_{\gamma'}) - J(u_\gamma))\big]\\
		&\le \frac{2}{\delta}(J(\hat u_{\gamma'}) - J(u_\gamma)) \le C_1\|\hat u_{\gamma'} - u_\gamma\|_{L^\infty(Q)} = \frac{C_1}{\gamma}\|u_\gamma\|_{L^\infty(Q)}|\gamma' - \gamma|.
	\end{align*}
	In the case $\gamma' > \gamma$ we proceed as follows
	\begin{align*}
		&\|\hat v_{\gamma'} - u_\gamma\|^2_{L^2(Q)} \le \frac{2}{\delta}\big[(J(\hat v_{\gamma'}) - J(u_{\gamma'})) + (J(u_{\gamma'}) - J(u_\gamma))\big]\\
		&\le \frac{2}{\delta}\big(J(\hat v_{\gamma'}) - J(u_{\gamma'})\big) \le C_2\|\hat v_{\gamma'} - u_{\gamma'}\|_{L^r(0,T;L^2(\Omega))}\\
		& = \frac{C_2}{\gamma'}\|u_{\gamma'}\|_{L^r(0,T;L^2(\Omega))}|\gamma' - \gamma| \le C_3|\gamma' - \gamma|.
	\end{align*}
	From here we get
	\begin{align*}
		&\|u_{\gamma'} - u_\gamma\|_{L^2(Q)} \le \|u_{\gamma'} - \hat v_{\gamma'}\|_{L^2(Q)} + \|\hat v_{\gamma'} - u_{\gamma}\|_{L^2(Q)}\\
		&\le \frac{|\gamma' - \gamma|}{\gamma'}\|u_{\gamma'}\|_{L^2(Q)} + \sqrt{C_3}|\gamma' - \gamma|^{\frac{1}{2}} \le C_4|\gamma' - \gamma|^{\frac{1}{2}},
	\end{align*}
	which concludes the proof.
	\qed\end{proof}

Theorems \ref{T5.2} and \ref{T6.2} imply H\"older stability with respect to $\gamma$ of the optimal controls if the sufficient second order condition $J''(u_\gamma)v^2 > 0$ $\forall v \in C_{\bar u} \setminus \{0\}$ holds. Now, we are interested in proving Lipschitz stability. To this end we need to make a stronger assumption, namely
\begin{equation}
	J''(u_{\gammab})v^2 > 0\quad \forall v \in L^2(Q) \setminus \{0\}, \quad y_0 \in C_0(\Omega), \quad \text{and}\quad \rd > \frac{4}{4-n},
	\label{E6.8}
\end{equation}
where $\gammab > 0$ is fixed and $C_0(\Omega)$ denotes the space of continuous real valued  functions on $\bar\Omega$ vanishing on $\Gamma$. From the first assumption in \eqref{E6.8} we deduce the existence of strictly positive numbers $\rho$ and $\nu$ such that
\begin{equation}
	J''(u)v^2 \ge \nu\|v\|^2_{L^2(Q)}\quad \forall v \in L^2(Q)\ \text{ and } \ \forall u \in B_\rho(u_{\gammab}),
	\label{E6.9}
\end{equation}
where $B_\rho(u_{\gammab})$ denotes the $L^\rd(0,T;L^2(\Omega))$ closed ball. Indeed, if \eqref{E6.9} does not hold, then we can take sequences $\{u_k\}_{k = 1}^\infty \subset L^\rd(0,T;L^2(\Omega))$ and $\{v_k\}_{k = 1}^\infty \subset L^2(Q)$ satisfying
\[
\lim_{k \to \infty}\|u_k - u_{\gammab}\|_{L^\rd(0,T;L^2(\Omega))} = 0, \|v_k\|_{L^2(Q)} = 1, v_k \rightharpoonup v \text{ in } L^2(Q), J''(u_k)v_k^2 \le \frac{1}{k}.
\]
It is easy to pass to the limit and to deduce
\[
J''(u_{\gammab})v^2 \le \liminf_{k \to \infty}J''(u_k)v_k^2 \le 0.
\]
This inequality and \eqref{E6.8} yield $v = 0$. But, arguing as in the proof of Theorem \ref{T5.3} we infer
\[
\kappa = \liminf_{k \to \infty}\kappa\|v_k\|^2_{L^2(Q)} = \liminf_{k \to \infty}J''(u_k)v_k^2 \le 0,
\]
which contradicts our assumption $\kappa > 0$.

We finish this section by proving the next theorem.

\begin{theorem}
	Let $u_{\gammab}$ be a local minimizer of (P$_{\gammab}$). We assume that \eqref{E6.8} holds and that $\rho$ satisfies \eqref{E6.9}. Then, there exists $\bar \varepsilon\in(0,\bar\gamma)$ such that $\Pbg$ has a unique local minimizer $u_\gamma$ in the interior of the $L^\rd(0,T;L^2(\Omega))$ ball $B_\rho(u_{\gammab})$ for every $\gamma \in (\gammab - \bar\varepsilon,\gammab + \bar\varepsilon)$. Moreover, there exists a constant $L$ such that
	\begin{equation}
		\|u_\gamma - u_{\gammab}\|_{L^\rd(0,T;L^2(\Omega))} \le L|\gamma - \gammab|\quad \forall \gamma \in (\gammab - \bar\varepsilon,\gammab + \bar\varepsilon).
		\label{E6.10}
	\end{equation}
	\label{T6.3}
\end{theorem}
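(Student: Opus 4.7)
My plan is to first establish existence and uniqueness of the local minimizer $u_\gamma$ by introducing the auxiliary restricted problem
\begin{equation*}
\min\bigl\{J(u) : u \in \uadg \cap B_\rho(u_{\gammab}) \cap L^\infty(Q)\bigr\},
\end{equation*}
whose feasible set is non-empty (the scaled control $(\min(\gamma,\gammab)/\gammab)\, u_{\gammab}$ lies in it). Existence of a minimizer $u_\gamma$ follows by adapting the truncation argument of Theorem \ref{T4.5}, and uniqueness is immediate from the strict convexity of $J$ on $B_\rho(u_{\gammab})$ provided by \eqref{E6.9}. Using scaled comparison controls analogous to those in \eqref{E6.5}, one shows $J(u_\gamma) \to J(u_{\gammab})$, and strong convexity forces $u_\gamma \to u_{\gammab}$ in $L^2(Q)$; combining this with Theorem \ref{T3.2} and the projection formula \eqref{E3.7} upgrades the convergence to $L^\rd(0,T;L^2(\Omega))$. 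Hence, for $|\gamma - \gammab|$ sufficiently small, $u_\gamma$ is interior to $B_\rho(u_{\gammab})$, the constraint $u \in B_\rho(u_{\gammab})$ is inactive, and $u_\gamma$ satisfies the first-order conditions \eqref{E3.2}--\eqref{E3.5} of $\Pbg$, making it a local minimizer of $\Pbg$.

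For the Lipschitz estimate, the key observation is that the change of variable $v = (\gammab/\gamma)\, u$ maps $\uadg$ bijectively onto the $\gamma$-independent set $\uadgb$, transforming $\Pbg$ into the parametric problem with fixed feasible set
\begin{equation*}
\min\bigl\{\tilde J_\gamma(v) : v \in \uadgb \cap L^\infty(Q)\bigr\}, \qquad \tilde J_\gamma(v) := J\bigl((\gamma/\gammab)\, v\bigr).
\end{equation*}
Setting $v_\gamma := (\gammab/\gamma)\, u_\gamma$ (so that $v_{\gammab} = u_{\gammab}$), after shrinking $\rho$ and $\bar\varepsilon$ so that $(\gamma/\gammab)\, v \in B_\rho(u_{\gammab})$ whenever $v \in B_{\rho/2}(u_{\gammab})$ and $|\gamma - \gammab| < \bar\varepsilon$, \eqref{E6.9} yields uniform strong monotonicity of $\tilde J'_\gamma$ on $B_{\rho/2}(u_{\gammab})$. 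Testing the first-order variational inequality for $v_\gamma$ against $w = v_{\gammab}$ and that for $v_{\gammab}$ against $w = v_\gamma$, and adding, one obtains
\begin{equation*}
\nu\,\|v_\gamma - v_{\gammab}\|^2_{L^2(Q)} \le \bigl\|\tilde J'_\gamma(v_{\gammab}) - \tilde J'_{\gammab}(v_{\gammab})\bigr\|_{L^2(Q)}\,\|v_\gamma - v_{\gammab}\|_{L^2(Q)}.
\end{equation*}
Since $\gamma \mapsto \tilde J'_\gamma(v_{\gammab}) = (\gamma/\gammab)\bigl(\varphi_{(\gamma/\gammab) u_{\gammab}} + \kappa\,(\gamma/\gammab)\, u_{\gammab}\bigr)$ is smooth by Theorems \ref{T2.1}--\ref{T2.2}, the right-hand side is $O(|\gamma - \gammab|)$, yielding $\|v_\gamma - v_{\gammab}\|_{L^2(Q)} \le C|\gamma - \gammab|$, and translating back gives $\|u_\gamma - u_{\gammab}\|_{L^2(Q)} \le C'|\gamma - \gammab|$.

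To upgrade to the stronger norm in \eqref{E6.10}, I would use the projection formula from Corollary \ref{C3.2} together with the pointwise decomposition
\begin{equation*}
\|u_\gamma(t) - u_{\gammab}(t)\|_{L^2(\Omega)} \le \tfrac{1}{\kappa}\|\varphi_\gamma(t) - \varphi_{\gammab}(t)\|_{L^2(\Omega)} + \|\proj_{B_\gamma}(-\varphi_{\gammab}(t)/\kappa) - \proj_{B_{\gammab}}(-\varphi_{\gammab}(t)/\kappa)\|_{L^2(\Omega)}.
\end{equation*}
The first term, integrated in $L^\rd(0,T)$, is controlled by $\|u_\gamma - u_{\gammab}\|_{L^2(Q)} \le C|\gamma - \gammab|$ through the linearized state and adjoint equations and the improved parabolic regularity granted by $y_0 \in C_0(\Omega)$ and $\rd > 4/(4-n)$. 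The second term, via the explicit soft-thresholding form of the projection onto $L^1$-balls, is bounded pointwise in $t$ by a constant times $|\gamma - \gammab|$ and is integrable in $L^\rd(0,T)$ under the same regularity hypotheses. The main obstacle is precisely this boosting step: the strong convexity \eqref{E6.9} is naturally an $L^2(Q)$ statement, yet \eqref{E6.10} demands Lipschitz control in the stronger norm $L^\rd(0,T;L^2(\Omega))$; bridging this gap requires the full strength of \eqref{E6.8}, in particular the strict inequality $\rd > 4/(4-n)$, which supplies enough regularity of $\varphi_\gamma$ to turn the projection identity into the desired sharper estimate.
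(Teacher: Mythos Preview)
Your overall strategy is a genuine alternative to the paper's proof, which proceeds by applying Dontchev's implicit function theorem for generalized equations to the optimality system \eqref{E6.12}--\eqref{E6.15} after the same rescaling you propose. The paper verifies the hypotheses of that abstract theorem by showing that the linearized generalized equation has a Lipschitz solution map; this involves constructing an auxiliary linear-quadratic problem and bootstrapping from $L^2(Q)$ to $L^{\rd}(0,T;L^2(\Omega))$ much as you attempt. Your direct route---restricted minimization for existence, strong monotonicity of $\tilde J'_\gamma$ for the $L^2(Q)$ Lipschitz bound---avoids the abstract machinery and is perfectly sound up to and including the estimate $\|v_\gamma - v_{\gammab}\|_{L^2(Q)} \le C|\gamma-\gammab|$.

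The gap is in your bootstrapping step, specifically the second term of your decomposition. The claim that
\[
\bigl\|\proj_{B_\gamma}(-\bar\varphi(t)/\kappa) - \proj_{B_{\gammab}}(-\bar\varphi(t)/\kappa)\bigr\|_{L^2(\Omega)} \le C|\gamma-\gammab|
\]
pointwise in $t$ is not justified. Writing $w=-\bar\varphi(t)/\kappa$ and the soft-threshold level $\lambda_\gamma$ via $\int_\Omega(|w|-\lambda_\gamma)^+\,dx=\gamma$, the $L^2(\Omega)$ difference is at most $|\lambda_\gamma-\lambda_{\gammab}|\,|\Omega|^{1/2}$, and one only has $|\lambda_\gamma-\lambda_{\gammab}|\le |\gamma-\gammab|/|\{|w|>\lambda\}|$. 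The denominator equals $|\supp\bar u(t)|$ (cf.\ Corollary~\ref{C3.2}), which has no uniform positive lower bound in $t$; without such a bound the best generic estimate is of square-root type, not Lipschitz. The extra hypotheses $y_0\in C_0(\Omega)$ and $\rd>4/(4-n)$ do not help here.

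The remedy is to discard the projection decomposition and instead exploit the \emph{pointwise-in-$t$} variational inequality \eqref{E3.6}. Testing the transformed inequalities for $v_\gamma(t)$ and $v_{\gammab}(t)$ against each other at a.e.\ $t$ gives
\[
\kappa\tfrac{\gamma}{\gammab}\,\|v_\gamma(t)-v_{\gammab}(t)\|_{L^2(\Omega)} \le \|\varphi_\gamma(t)-\varphi_{\gammab}(t)\|_{L^2(\Omega)} + C|\gamma-\gammab|.
\]
Your $L^2(Q)$ estimate on $u_\gamma-u_{\gammab}$ yields $\|y_\gamma-y_{\gammab}\|_{W(0,T)}\le C|\gamma-\gammab|$ and then $\|\varphi_\gamma-\varphi_{\gammab}\|_{C([0,T];L^2(\Omega))}\le C'|\gamma-\gammab|$ via $W(0,T)\hookrightarrow C([0,T];L^2(\Omega))$; substituting gives $\|v_\gamma-v_{\gammab}\|_{L^\infty(0,T;L^2(\Omega))}\le C''|\gamma-\gammab|$, hence \eqref{E6.10}. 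This is precisely the mechanism the paper uses (in the linearized Dontchev setting) to pass from \eqref{E6.29} to \eqref{E6.32} via \eqref{E6.26} and \eqref{E6.31}.
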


\begin{proof}
	Let us take $\rho > 0$ such that \eqref{E6.9} holds. Then, $J$ has at most one local (and global) minimizer $u_\gamma$ in the closed set $B_\rho(u_{\gammab}) \cap \uad$. This is a consequence of the strict convexity of $J$ in the ball $B_\rho(u_{\gammab})$; see \eqref{E6.9}. We will prove that this local minimizer belongs to the interior of the $L^\rd(0,T;L^2(\Omega))$ ball $B_\rho(u_{\gammab})$ if $\gamma$ is close enough to $\gammab$, and consequently it is a local minimizer of \Pbg. In order to prove this, as well as \eqref{E6.10}, we reformulate the control problem \Pbg as follows
	\[
	\Qbg \quad  \inf_{u \in \kad} J_\gamma(u):= \frac{1}{2}\int_Q (y_{\gamma,u}(x,t) - y_d(x))^2\dx\dt + \frac{\kappa\gamma^2}{2}\int_Q u(x,t)^2\dx\dt,
	\]
	where
	\[
	\kad = \{u \in L^\rd(0,T;L^2(\Omega)) : \|u(t)\|_{L^1(\Omega)} \le 1 \text{ for a.a. } t \in (0,T)\}
	\]
	and $y_{\gamma,u}$ is the solution of the semilinear parabolic equation
	\begin{equation}
		\left\{\begin{array}{ll}\displaystyle\frac{\partial y}{\partial t} + Ay + a(x,t,y) =  \gamma u & \mbox{in } Q = \Omega \times (0,T),\\y = 0  \mbox{ on } \Sigma = \Gamma \times (0,T),& y(0) = y_0 \mbox{ in } \Omega.\end{array}\right.
		\label{E6.11}
	\end{equation}
	It is obvious that the problems \Pbg and \Qbg are equivalent for every $\gamma$. This equivalence is understood in the sense that $u$ is a local (global) minimizer of \Qbg if and only if $u_\gamma = \gamma u$ is a local (global) minimizer of \Pbg, and $J(u_\gamma) = J_\gamma(u)$; recall Remark \ref{R4.2}.
	
	Take $\varepsilon \in (0,\gammab)$ and $\bar\rho \in (0,\rho]$ such that $(\gammab + \varepsilon)\bar\rho + \varepsilon\|\bar u\|_{L^\rd(0,T;L^2(\Omega))} < \rho$. Then, we have with the notation $u_\gammab = \gammab\bar u$ and $u_\gamma = \gamma u$
	\begin{align*}
		&\|u_\gamma - u_\gammab\|_{L^\rd(0,T;L^2(\Omega)} \le \gamma\|u - \bar u\|_{L^\rd(0,T;L^2(\Omega))} + |\gamma - \gammab|\|\bar u\|_{L^\rd(0,T;L^2(\Omega))}\\
		& \le (\gammab + \varepsilon)\bar\rho + \varepsilon\|u_\gammab\|_{L^\rd(0,T;L^2(\Omega))} < \rho \ \ \forall u \in B_{\bar\rho}(\bar u) \text{ and } \forall \gamma \in (\gammab - \varepsilon,\gammab + \varepsilon).
	\end{align*}
	Due to \eqref{E6.9} and the fact that $J_\gamma''(u)v^2 = \gamma^2 J''(u_\gamma)v^2$, we deduce that
	\[
	J''_\gamma(u)v^2 \ge \gamma^2\nu\|v\|^2_{L^\rd(0,T;L^2(\Omega))} \ge (\gammab - \varepsilon)^2\nu\|v\|^2_{L^\rd(0,T;L^2(\Omega))}\quad \forall u \in B_{\bar\rho}(\bar u).
	\]
	Therefore, $J_\gamma$ is strictly convex on the ball $B_{\bar\rho}(\bar u)$. Hence, a control $u$ is a local solution of \Qbg in the interior of $B_{\bar\rho}(\bar u)$ if and only if $u$ satisfies the optimality system
	\begin{align}
		&\left\{\begin{array}{l}\displaystyle\frac{\partial y}{\partial t} + Ay + a(x,t,y) =  \gamma u \ \mbox{ in } Q,\\ y = 0  \mbox{ on } \Sigma,\ \ y(0) = y_0 \mbox{ in } \Omega,\end{array}\right.
		\label{E6.12}\\
		&\left\{\begin{array}{l}\displaystyle-\frac{\partial\varphi}{\partial t} + A^*\varphi + \frac{\partial a}{\partial y}(x,t,y)\varphi =  y - y_d\ \mbox{ in } Q,\\ \varphi = 0  \mbox{ on } \Sigma,\ \ \varphi(T) = 0 \mbox{ in } \Omega,\end{array}\right.
		\label{E6.13}\\
		&\int_Q\mu(v - u)\dx\dt \le 0\quad \forall v \in \kad, \label{E6.14}\\
		&\gamma\varphi + \kappa\gamma^2 u + \mu = 0. \label{E6.15}
	\end{align}
	Denote by $\bar y$ and $\bar\varphi$ the state and adjoint state associated to $\bar u$. Our goal is to apply \cite[Theorem 2.4]{Dontchev1995} to the previous optimality system. To this end we define the spaces:
	\begin{align*}
		& \mathcal{Y} = \{y \in W(0,T) \cap C(\bar Q) : \frac{\partial y}{\partial t} + Ay \in L^\rd(0,T;L^2(\Omega))\},\\
		& \Phi = \{\varphi \in H^1(Q) \cap C(\bar Q) : -\frac{\partial\varphi}{\partial t} + A^*\varphi \in L^\rd(0,T;L^2(\Omega)) \text{ and } \varphi(T) = 0\},\\
		& X = \mathcal{Y} \times \Phi \times L^\rd(0,T;L^2(\Omega)),\ \ Y = \mathbb{R},\ \ Z = C_0(\Omega) \times L^\rd(0,T;L^2(\Omega))^3.
	\end{align*}
	On $\mathcal{Y}$ and $\Phi$ we consider the graph norms
	\begin{align*}
		&\|y\|_{\mathcal{Y}} = \|y\|_{W(0,T)} + \|y\|_{C(\bar Q)} + \Big\|\frac{\partial y}{\partial t} + Ay\Big\|_{L^\rd(0,T;L^2(\Omega))},\\
		&\|\varphi\|_\Phi = \|\varphi\|_{H^1(Q)} + \|\varphi\|_{C(\bar Q)} + \Big\|-\frac{\partial \varphi}{\partial t} + A^*\varphi\Big\|_{L^\rd(0,T;L^2(\Omega))}.
	\end{align*}
	Thus, $X$ is a Banach space. Moreover, we introduce the mapping $f:X \times Y \longrightarrow Z$ and the multivalued function $F:X \longrightarrow Z$
	\[
	f((y,\varphi,u),\gamma) = \left(\begin{array}{c}y(0) - y_0\\\displaystyle\frac{\partial y}{\partial t} + Ay + a(\cdot,\cdot,y) - \gamma u\\\displaystyle-\frac{\partial\varphi}{\partial t} + A^*\varphi + \frac{\partial a}{\partial y}(\cdot,\cdot,y)\varphi - y + y_d\\\gamma\varphi + \gamma^2\kappa u\end{array}\right), \quad F(y,\varphi,u) = \left(\begin{array}{c} 0\\0\\0\\F_0(u)\end{array}\right),
	\]
	where the multivalued function $F_0:L^\rd(0,T;L^2(\Omega)) \longrightarrow L^\rd(0,T;L^2(\Omega))$ is defined by
	\[
	F_0(u) = \left\{\begin{array}{cl} \emptyset &\text{if } u \not\in \kad,\vspace{2mm}\\\displaystyle\Big\{\mu \in L^\rd(0,T;L^2(\Omega)) : \int_Q\mu(v - u)\dx\dt \le 0\ \forall v \in \kad\Big\},&\text{otherwise.}\end{array}\right.
	\]
	
	Due to the regularity $y_0 \in C_0(\Omega)$, see assumption \eqref{E6.8}, we deduce from \eqref{E6.12} that $\bar y \in \mathcal{Y}$. Therefore, we have that $(\bar y,\bar\varphi,\bar u) \in X$. Moreover, $(\bar y,\bar\varphi,\bar u)$ satisfies the optimality system \eqref{E6.12}--\eqref{E6.15}, which implies that $0 \in f((\bar y,\bar\varphi,\bar u),\gammab) + F(\bar y,\bar\varphi,\bar u)$. Using our assumptions on $a$ and the continuous embedding $\mathcal{Y} \subset C(\bar Q)$ we deduce that the function $f$ is of class $C^1$. Then, the function $g:X \longrightarrow Z$, defined by
	\[
	g(y,\varphi,u) = f((\bar y,\bar\varphi,\bar u),\gammab) + D_{(y,\varphi,u)}f((\bar y,\bar\varphi,\bar u),\gammab)(y - \bar y,\varphi - \bar\varphi,u - \bar u),
	\]
	strongly approximates $f$ at $((\bar y,\bar\varphi,\bar u),\gammab)$, and $g(\bar y,\bar\varphi,\bar u) = f((\bar y,\bar\varphi,\bar u),\gammab)$; see \cite{Robinson91} for the definition of a strong approximation.
	
	We will apply \cite[Theorem 2.4]{Dontchev1995} to deduce the existence of $\bar\varepsilon \in (0,\varepsilon]$ and $\tilde\rho \in (0,\bar\rho]$ such that \eqref{E6.12}--\eqref{E6.15} has a unique solution $u$ in the interior of the ball $B_{\tilde\rho}(\bar u)$ for every $\gamma \in (\gammab - \bar\varepsilon,\gammab + \bar\varepsilon)$. Moreover, these solutions satisfy
	\begin{equation}
		\|u - \bar u\|_{L^\rd(0,T;L^2(\Omega))} \le \lambda|\gamma - \gammab|.
		\label{E6.16}
	\end{equation}
	for some $\lambda > 0$. For this purpose it is enough to prove that the equation
	\begin{equation}
		\beta \in g(y,\varphi,u) + F(y,\varphi,u)
		\label{E6.17}
	\end{equation}
	has a unique solution $(y_\beta,\varphi_\beta,u_\beta) \in X$ for every $\beta = (\beta_i)_{i = 1}^4 \in Z$ and the Lipschitz property
	\begin{equation}
		\|(y_{\hat\beta},\varphi_{\hat\beta},u_{\hat\beta}) - (y_\beta,\varphi_\beta,u_\beta)\|_X \le \lambda\|\hat\beta - \beta\|_Z
		\label{E6.18}
	\end{equation}
	holds for some $\lambda > 0$ and all $\hat\beta, \beta \in Z$. First, we prove the existence of a unique solution. To this end we consider the optimal control problem
	\[
	\text{\rm (P$_\beta$)}\quad \inf_{u \in \kad} \mathcal{J}_\beta(u),
	\]
	where
	\begin{align*}
		\mathcal{J}_\beta(u):=& \frac{1}{2}\int_Q \big[1 - \frac{\partial^2a}{\partial y^2}(x,t,\bar y)\bar\varphi\big]y^2\dx\dt + \frac{\kappa\gammab^2}{2}\int_Q u(x,t)^2\dx\dt\\
		& + \int_Q\beta_3 y\dx\dt + \int_Q(\gammab\bar\varphi + \gammab^2\kappa\bar u - \beta_4)u\dx\dt,
	\end{align*}
	and $y$ satisfies the equation
	\begin{equation}
		\left\{\begin{array}{l}\displaystyle\frac{\partial y}{\partial t} + A y + \frac{\partial a}{\partial y}(x,t,\bar y)y =  \gammab u + \beta_2\ \mbox{ in } Q,\\ y = 0  \mbox{ on } \Sigma,\ \ y(0) = \beta_1 \mbox{ in } \Omega.\end{array}\right.
		\label{E6.19}
	\end{equation}
	Let us consider the solution $\xi_\beta \in \mathcal{Y}$ of the equation
	\begin{equation}
		\left\{\begin{array}{l}\displaystyle\frac{\partial\xi}{\partial t} + A\xi + \frac{\partial a}{\partial y}(x,t,\bar y)\xi = \beta_2\ \mbox{ in } Q,\\ \xi = 0  \mbox{ on } \Sigma,\ \ \xi(0) = \beta_1 \mbox{ in } \Omega.\end{array}\right.
		\label{E6.20}
	\end{equation}
	According to \eqref{E2.17} we have that $y = \gammab G'(u_{\gammab})u + \xi_\beta = \gammab z_u + \xi_\beta$. Inserting this identity in the cost functional we get
	\begin{align*}
		\mathcal{J}_\beta(u)=& \frac{\gammab^2}{2}\left\{\int_Q \big[1 - \frac{\partial^2a}{\partial y^2}(x,t,\bar y)\bar\varphi\big]z_u^2\dx\dt + \kappa\int_Q u^2\dx\dt\right\}\\
		& + \gammab\int_Q\Big(\big[1 - \frac{\partial^2a}{\partial y^2}(x,t,\bar y)\bar\varphi\big]\xi_\beta + \beta_3\Big)z_u\dx\dt + \int_Q(\gammab\bar\varphi + \gammab^2\kappa u_{\gammab} - \beta_4)u\dx\dt\\
		& + \int_Q\Big(\frac{1}{2}\big[1 - \frac{\partial^2a}{\partial y^2}(x,t,\bar y)\bar\varphi\big]\xi_\beta^2 + \beta_3\xi_\beta\Big)\dx\dt.
	\end{align*}
	From \eqref{E2.21}, \eqref{E6.9}, and the continuity of the mapping $u \to z_u$ in $L^2(Q)$ we deduce the existence of two constants $C_1$ and $C_2$ such that
	\[
	\mathcal{J}_\beta(u) \ge \frac{\gammab^2}{2}\nu\|u\|^2_{L^2(Q)} + C_1\|u\|_{L^2(Q)} + C_2.
	\]
	Therefore, $\mathcal{J}_\beta$ is a coercive, continuous, and strictly convex quadratic functional on $L^2(Q)$. As a consequence, we infer the existence and uniqueness of a minimizer $\tilde u_\beta$ of $\mathcal{J}_\beta$ on the set
	\[
	\tilde{K}_{ad} = \{u \in L^2(Q) : \|u(t)\|_{L^1(\Omega)} \le 1 \text{ for a.a. } t \in (0,T)\}.
	\]
	Similarly as in Theorem \ref{T3.1}, we deduce the existence of elements $\tilde y_\beta \in W(0,T)$, $\tilde\varphi_\beta \in H^1(Q)$, and $\tilde\mu_\beta \in L^2(Q)$ satisfying
	\begin{align}
		&\left\{\begin{array}{l}\displaystyle\frac{\partial\tilde y_\beta}{\partial t} + A\tilde y_\beta + \frac{\partial a}{\partial y}(x,t,\bar y)\tilde y_\beta = \gammab\tilde u_\beta + \beta_2\ \mbox{ in } Q,\\ \tilde y_\beta = 0  \mbox{ on } \Sigma,\ \ \tilde y_\beta(0) = \beta_1 \mbox{ in } \Omega,\end{array}\right.
		\label{E6.21}\\
		&\left\{\begin{array}{l}\displaystyle-\frac{\partial\tilde\varphi_\beta}{\partial t} + A^*\tilde\varphi_\beta + \frac{\partial a}{\partial y}(x,t,\bar y)\tilde\varphi_\beta =  \big[1 - \frac{\partial^2a}{\partial y^2}(x,t,\bar y)\bar\varphi\big]\tilde y_\beta + \beta_3\ \mbox{ in } Q,\\ \tilde\varphi_\beta = 0  \mbox{ on } \Sigma,\ \ \tilde\varphi_\beta(T) = 0 \mbox{ in } \Omega,\end{array}\right.
		\label{E6.22}\\
		&\int_Q\tilde\mu_\beta(u - \tilde u_\beta)\dx\dt \le 0\quad \forall u \in \tilde{K}_{ad},
		\label{E6.23}\\
		&\gammab\tilde\varphi_\beta + \gammab^2\kappa\tilde u_\beta + \gammab\bar\varphi + \gammab^2\kappa\bar u - \beta_4+ \tilde\mu_\beta = 0.
		\label{E6.24}
	\end{align}
	Arguing similarly as in the proof of Theorem \ref{T4.4} we deduce that $\tilde u_\beta$ and $\tilde\mu_\beta$ belong to the space $L^\rd(0,T;L^2(\Omega))$. Thus, $\tilde u_\beta$ is the unique solution of (P$_\beta$). Moreover, from \eqref{E6.21} and \eqref{E6.22} along with \eqref{E6.8} we infer that $\tilde y_\beta \in \mathcal{Y}$ and $\tilde\varphi_\beta \in \Phi$. Hence, we have that $(\tilde y_\beta,\tilde\varphi_\beta,\tilde u_\beta) \in X$ and \eqref{E6.23} holds for every $u \in \kad$. Due to the convexity of (P$_\beta$), we know that \eqref{E6.21}--\eqref{E6.24} are necessary and sufficient conditions of optimality for (P$_\beta$). This fact and the strict convexity of $\mathcal{J}_\beta$ imply that the system \eqref{E6.21}--\eqref{E6.24} has a unique solution $(\tilde y_\beta,\tilde\varphi_\beta,\tilde u_\beta,\tilde\mu_\beta)$. If we set $y_\beta = \tilde y_\beta + \bar y$, $\varphi_\beta = \tilde\varphi_\beta + \bar\varphi$, $u_\beta = \tilde u_\beta + \bar u$, and $\mu_\beta = \tilde\mu_\beta$, \eqref{E6.21}--\eqref{E6.24} yields that $(y_\beta,\varphi_\beta,u_\beta)$ is the unique element of $X$ satisfying \eqref{E6.17}.
	
	Now, we prove that this solution is Lipschitz with respect to $\beta$. First, we observe that \eqref{E6.24} can be written as
	\begin{equation}
		\gammab\varphi_\beta + \gammab^2\kappa u_\beta - \beta_4 + \mu_\beta = 0.
		\label{E6.25}
	\end{equation}
	Given $\beta, \hat\beta \in Z$, we infer from \eqref{E6.23}-\eqref{E6.24} and \eqref{E6.25} for $\beta$ and $\hat\beta$
	\begin{align*}
		&\int_\Omega(\gammab\varphi_\beta(t) + \gammab^2\kappa u_\beta(t) - \beta_4(t))(u_{\hat\beta}(t) - u_\beta(t))\dx\dt \le 0,\\
		&\int_\Omega(\gammab\varphi_{\hat\beta}(t) + \gammab^2\kappa u_{\hat\beta}(t) - \hat\beta_4(t))(u_\beta(t) - u_{\hat\beta}(t))\dx\dt \le 0.
	\end{align*}
	Adding these inequalities we get
	\begin{align}
		&\gammab^2\kappa\|u_{\hat\beta}(t) - u_\beta(t)\|^2_{L^2(\Omega)} \le \gammab\int_\Omega(\varphi_\beta(t) - \varphi_{\hat\beta}(t))(u_{\hat\beta}(t) - u_\beta(t))\dx\notag\\
		&  + \|\hat\beta_4(t) - \beta_4(t)\|_{L^2(\Omega)}\|u_{\hat\beta}(t) - u_\beta(t)\|_{L^2(\Omega)}\label{E6.26}
	\end{align}
	for almost every $t \in (0,T)$.
	Now, taking into account that $y_{\hat\beta} - y_\beta = \tilde y_{\hat\beta} - \tilde y_\beta$, $\varphi_{\hat\beta} - \varphi_\beta = \tilde\varphi_{\hat\beta} - \tilde\varphi_\beta$, and $u_{\hat\beta} - u_\beta = \tilde u_{\hat\beta} - \tilde u_\beta$, subtracting the equations \eqref{E6.21} satisfied by $y_{\hat\beta}$ and $y_\beta$, and the equations \eqref{E6.22} for $\varphi_{\hat\beta}$ and $\varphi_\beta$, respectively, we obtain
	\begin{align*}
		&\ \ \gammab\int_Q(\varphi_\beta(t) - \varphi_{\hat\beta}(t))(u_{\hat\beta}(t) - u_\beta(t))\dx\\
		& = \int_Q\left\{\Big(\frac{\partial}{\partial t} + A + \frac{\partial a}{\partial y}(x,t,\bar y)\Big)(y_{\hat\beta} - y_\beta)(\varphi_\beta - \varphi_{\hat\beta}) - (\hat\beta_2 - \beta_2)(\varphi_\beta - \varphi_{\hat\beta})\right\} \dx\dt\\
		&= \int_Q\left\{\Big(-\frac{\partial}{\partial t} + A^* + \frac{\partial a}{\partial y}(x,t,\bar y)\Big)(\varphi_\beta - \varphi_{\hat\beta})(y_{\hat\beta} - y_\beta)  - (\hat\beta_2 - \beta_2)(\varphi_\beta - \varphi_{\hat\beta})\right\}\dx\dt\\
		& \hspace{1cm}- \int_\Omega(\hat\beta_1 - \beta_1)(\varphi_\beta(0) - \varphi_{\hat\beta}(0))\,dx\\
		& = -\int_Q\Big\{\big[1 - \frac{\partial^2a}{\partial y^2}(x,t,\bar y)\bar\varphi\big](y_{\hat\beta} - y_\beta)^2 + (\hat\beta_2 - \beta_2)(\varphi_\beta - \varphi_{\hat\beta})\Big\}\dx\dt\\
		& -\int_Q(\hat\beta_3 - \beta_3)(y_{\hat\beta} - y_\beta)\dx\dt - \int_\Omega(\hat\beta_1 - \beta_1)(\varphi_\beta(0) - \varphi_{\hat\beta}(0))\dx.
	\end{align*}
	Let us denote by $\xi_\beta$ and $\xi_{\hat\beta}$ the solutions of \eqref{E6.20} corresponding to $(\beta_1,\beta_2)$ and $(\hat\beta_1,\hat\beta_2)$, respectively. Then, we have that $y_{\hat\beta} - y_\beta = \gammab G'(\bar u)(u_{\hat\beta} - u_\beta) + \xi_{\hat\beta} - \xi_\beta = \gammab z_{u_{\hat\beta} - u_\beta} + (\xi_{\hat\beta} - \xi_\beta)$. Inserting this identity in the above equality we infer
	\begin{align*}
		&\ \ \gammab\int_Q(\varphi_\beta(t) - \varphi_{\hat\beta}(t))(u_{\hat\beta}(t) - u_\beta(t))\dx\\
		& = -\gammab^2\int_Q\big[1 - \frac{\partial^2a}{\partial y^2}(x,t,\bar y)\bar\varphi\big]z^2_{u_{\hat\beta} - u_\beta}\dx\dt\\
		& - \int_Q\big[1 - \frac{\partial^2a}{\partial y^2}(x,t,\bar y)\bar\varphi\big][(\xi_{\hat\beta} - \xi_\beta)^2 + 2z_{u_{\hat\beta} - u_\beta}(\xi_{\hat\beta} - \xi_\beta)]\dx\dt\\
		& - \int_Q\Big\{(\hat\beta_2 - \beta_2)(\varphi_{\hat\beta} - \varphi_\beta) + (\hat\beta_3 - \beta_3)(y_{\hat\beta} - y_\beta)\Big\}\dx\dt\\
		&\hspace{1cm} - \int_\Omega(\hat\beta_1 - \beta_1)(\varphi_\beta(0) - \varphi_{\hat\beta}(0))\dx\\
		& \le -\gammab^2\int_Q\big[1 - \frac{\partial^2a}{\partial y^2}(x,t,\bar y)\bar\varphi\big]z^2_{u_{\hat\beta} - u_\beta}\dx\dt\\
		& + C_3\Big\{\|\hat\beta_1 - \beta_1\|^2_{L^2(\Omega)} + \|\hat\beta_2 - \beta_2\|^2_{L^2(Q)}\\
		&\hspace{1cm} + \|u_{\hat\beta} - u_\beta\|_{L^2(Q)}\big[\|\hat\beta_1 - \beta_1\|_{L^2(\Omega)} + \|\hat\beta_2 - \beta_2\|_{L^2(Q)}\big]\\
		&\hspace{1cm} + \|\hat\beta_2 - \beta_2\|_{L^2(Q)}\|\varphi_{\hat\beta} - \varphi_\beta\|_{L^2(Q)} + \|\hat\beta_3 - \beta_3\|_{L^2(Q)}\|y_{\hat\beta} - y_\beta\|_{L^2(Q)}\\
		&\hspace{1cm} + \|\hat\beta_1 - \beta_1\|_{L^2(\Omega)}\|\varphi_\beta(0) - \varphi_{\hat\beta}(0)\|_{L^2(\Omega)}\Big\}.
	\end{align*}
	Now, from the equations satisfied by $y_{\hat\beta} - y_\beta$ and $\varphi_{\hat\beta} - \varphi_\beta$ we get
	\begin{align}
		&\|y_{\hat\beta} - y_\beta\|_{W(0,T)} \le C_4\Big(\|u_{\hat\beta} - u_\beta\|_{L^2(Q)} + \|\hat\beta_2 - \beta_2\|_{L^2(Q)} + \|\hat\beta_1 - \beta_1\|_{L^2(\Omega)}\Big), \label{E6.27}\\
		&\|\varphi_{\hat\beta} - \varphi_\beta\|_{H^1(Q)} \le C_5\Big(\|y_{\hat\beta} - y_\beta\|_{L^2(Q)} + \|\hat\beta_3 - \beta_3\|_{L^2(Q)}\Big).\label{E6.28}
	\end{align}
	Using the continuous embeddings $W(0,T) \subset L^2(Q)$ and $H^1(Q) \subset C([0,T];L^2(\Omega))$, and the estimates \eqref{E6.27} and \eqref{E6.28}, we infer
	\begin{align*}
		&\ \ \gammab\int_Q(\varphi_\beta(t) - \varphi_{\hat\beta}(t))(u_{\hat\beta}(t) - u_\beta(t))\dx\\
		& \le -\gammab^2\int_Q\big[1 - \frac{\partial^2a}{\partial y^2}(x,t,\bar y)\bar\varphi\big]z^2_{u_{\hat\beta} - u_\beta}\dx\dt\\
		& + C_6\Big\{\|u_{\hat\beta} - u_\beta\|_{L^2(Q)}\big[\|\hat\beta_1 - \beta_1\|_{L^2(\Omega)} + \|\hat\beta_2 - \beta_2\|_{L^2(Q)} + \|\hat\beta_3 - \beta_3\|_{L^2(Q)}\big]\\
		&\hspace{1cm} + \|\hat\beta_1 - \beta_1\|^2_{L^2(\Omega)} + \|\hat\beta_2 - \beta_2\|^2_{L^2(Q)} + \|\hat\beta_3 - \beta_3\|^2_{L^2(Q)}\Big\}.
	\end{align*}
	Combining this inequality with \eqref{E6.26} and using \eqref{E6.9} we deduce
	\begin{align*}
		&\gammab^2\nu\|u_{\hat\beta} - u_\beta\|^2_{L^2(Q)} \le \gammab^2J''(\bar u)(u_{\hat\beta} - u_\beta)^2\\
		&= \gammab^2\Big\{\int_Q\big[1 - \frac{\partial^2a}{\partial y^2}(x,t,\bar y)\bar\varphi\big]z^2_{u_{\hat\beta} - u_\beta}\dx\dt + \kappa\|u_{\hat\beta} - u_\beta\|^2_{L^2(Q)}\Big\}\\
		&\le C_7\Big\{\|u_{\hat\beta} - u_\beta \|_{L^2(Q)}\Big(\|\hat\beta_1 - \beta_1\|_{L^2(\Omega)} +  \sum_{j = 2}^4\|\hat\beta_j - \beta_j\|_{L^2(Q)}\Big)\\
		& + \|\hat\beta_1 - \beta_1\|^2_{L^2(\Omega)} + \sum_{j = 2}^3\|\hat\beta_j - \beta_j\|^2_{L^2(Q)} \Big\}.
	\end{align*}
	This yields
	\begin{equation}
		\|u_{\hat\beta} - u_\beta\|_{L^2(Q)} \le C_8\|\hat\beta - \beta\|_Z.
		\label{E6.29}
	\end{equation}
	Using \eqref{E6.27} and \eqref{E6.29} it follows that
	\begin{equation}
		\|y_{\hat\beta} - y_\beta\|_{W(0,T)} \le C_9\|\hat\beta - \beta\|_Z.
		\label{E6.30}
	\end{equation}
	Now, \eqref{E6.28} and \eqref{E6.30} lead to
	\begin{equation}
		\|\varphi_{\hat\beta} - \varphi_\beta\|_{H^1(Q)} + \|\varphi_{\hat\beta} - \varphi_\beta\|_{C(\bar Q)} \le C_{10}\|\hat\beta - \beta\|_Z.
		\label{E6.31}
	\end{equation}
	Getting back to \eqref{E6.26}, and using \eqref{E6.31}, we get
	\begin{equation}
		\|u_{\hat\beta} - u_\beta\|_{L^\rd(0,T;L^2(\Omega))} \le C_{11}\|\hat\beta - \beta\|_Z.
		\label{E6.32}
	\end{equation}
	Using this in the equation satisfied by $y_{\hat\beta} - y_\beta$ we also obtain
	\begin{equation}
		\|y_{\hat\beta} - y_\beta\|_{C(\bar Q)} \le C_{12}\|\hat\beta - \beta\|_Z.
		\label{E6.33}
	\end{equation}
	Now, \eqref{E6.30}--\eqref{E6.33} imply \eqref{E6.18}. Hence, we apply \cite[Theorem 2.4]{Dontchev1995} to deduce the existence of $\bar\varepsilon \in (0,\varepsilon]$ and $\tilde\rho \in (0,\bar\rho]$ such that for every $\gamma \in (\gammab - \bar\varepsilon,\gammab + \bar\varepsilon)$ the system \eqref{E6.12}--\eqref{E6.15} has a solution $(y,\varphi,u)$ with $u$ in the interior of the ball $B_{\tilde\rho}(\bar u)$ satisfying \eqref{E6.16}. Since $\bar\varepsilon \le \epsilon$ and $\tilde\rho \le \bar\rho$, we know that $J_\gamma$ is strictly convex on $B_{\tilde\rho}(\bar u)$, hence $u$ is the unique local minimizer of \Qbg in this ball. Moreover, $u_\gamma = \gamma u$ belongs to the interior of the ball $B_\rho(u_\gammab)$ and $u_\gamma$ is the unique local minimizer of \Pbg in $B_\rho(u_\gammab)$. Moreover, from \eqref{E6.16} we infer
	\begin{align*}
		\|u_\gamma - u_\gammab\|_{L^\rd(0,T;L^2(\Omega))} &\le \gamma\|u - \bar u\|_{L^\rd(0,T;L^2(\Omega))} + |\gamma - \gammab|\|\bar u\|_{L^\rd(0,T;L^2(\Omega))}\\
		&< (\gammab + \bar\varepsilon)\lambda|\gamma - \gammab| + |\gamma - \gammab|\|\bar u\|_{L^\rd(0,T;L^2(\Omega))} = L|\gamma - \gammab|
	\end{align*}
	for $L = (\gammab + \bar\varepsilon)\lambda + \|\bar u\|_{L^\rd(0,T;L^2(\Omega))}$. This ends the proof.
	\qed\end{proof}

%\bibliographystyle{plain}
%\bibliography{Casas-Kunisch}

\end{document}